%% file: main.tex
\documentclass[10pt,twocolumn,twoside]{IEEEtran}

\usepackage{xcolor}

\usepackage{amsmath,amssymb,array,arydshln}
\usepackage{amsthm}
\usepackage{bm}
\allowdisplaybreaks

\usepackage{graphicx}

\usepackage{mathtools}
\usepackage{algorithm}
\usepackage{algpseudocode}
\usepackage[caption=false, font=footnotesize]{subfig}
\usepackage{subfloat}
\usepackage{multirow}
\usepackage{titlesec}
\usepackage{hyperref}

\usepackage[opt]{icpslab}

\input{deflatex.tex}

\newtheorem{theorem}{Theorem}

\newtheorem{lemma}{Lemma}
\newtheorem{scenario}{Scenario}

\newtheorem{proposition}{Proposition}
\newtheorem{remark}{Remark}

\ifdefined\SubmittedPaper
\newcommand{\changed}[1]{\textcolor{blue}{#1}}
\else
\newcommand{\changed}[1]{#1}
\fi

\ifdefined\SubmittedPaper
\titlespacing*{\section}{0pt}{0.4em}{0.3em}
\titlespacing*{\subsection}{0pt}{0.3em}{0.2em}
\titlespacing*{\subsubsection}{0pt}{0.2em}{0.1em}
\fi

\def\BibTeX{{\rm B\kern-.05em{\sc i\kern-.025em b}\kern-.08em
    T\kern-.1667em\lower.7ex\hbox{E}\kern-.125emX}}
\begin{document}

\title{Communication-efficient ADMM over Hierarchical Networks}

\author{Binh Nguyen, Shuangqing Wei and Truong X. Nghiem
\thanks{This material is based upon work supported by the National Science Foundation under Awards No. 2449927 and 231711.}%
\thanks{Corresponding author: Truong X. Nghiem (truong.nghiem@ucf.edu).}%
\thanks{Binh Nguyen and Truong X. Nghiem are with Department of Electrical and Computer Engineering,
College of Engineering and Computer Science, 
University of Central Florida, Orlando, FL 32816, USA (e-mail: thanhbinh.nguyen@ucf.edu; truong.nghiem@ucf.edu).}%
\thanks{Shuangqing Wei is with Division of Electrical and Computer Engineering, School of Electrical Engineering and Computer Science, Louisiana State University, Baton Rouge, LA 70803, USA (e-mail: swei@lsu.edu).}%
}

\maketitle

\begin{abstract}
This paper develops a novel distributed optimization algorithm based on the Alternating Direction Method of Multipliers (ADMM) to solve hierarchical optimization problems over tree-structured networks, termed hierarchical ADMM (hADMM), with a particular focus on enhancing communication efficiency across the network. 
By rearranging the augmented Lagrangian to establish a query-response communication mechanism between nodes that explicitly exploits the hierarchical tree structure, the proposed algorithm significantly reduces communication costs compared to existing ADMM-based methods for hierarchical optimization. 
Furthermore, hADMM guarantees asymptotic convergence under convexity assumptions. 
We also present a convergence rate analysis based on linear matrix inequalities to characterize the maximum theoretically achievable convergence rates across different network topologies, showing that the proposed hADMM attains linear convergence under mild conditions.
Three numerical experiments demonstrate that hADMM is compatible with arbitrary tree network structures and outperforms existing approaches in terms of communication efficiency.
\end{abstract}

\begin{IEEEkeywords}
    ADMM, hierarchical optimization, sharing problem, tree network
\end{IEEEkeywords}

\section{Introduction}
\label{sec:introduction}

\changed{The alternating direction method of multipliers (ADMM) is a widely used optimization method for solving large-scale,} 
distributed optimization problems characterized by structured objective functions and coupled constraints \cite{boyd2011distributed}. 
ADMM integrates dual decomposition strategies with augmented Lagrangian methods, effectively breaking down complex optimization tasks 
into manageable subproblems that can be solved independently and concurrently. The method iteratively updates primal and dual variables, 
allowing distributed agents to solve local optimization tasks and subsequently coordinate solutions through dual variables. 
Due to its efficient convergence properties, ease of parallelization, and scalability, ADMM has seen wide adoption across numerous fields, 
including signal processing \cite{chanPlugandPlayADMMImage2017}, machine learning \cite{wang2019global}, smart grid management \cite{erseghe2014distributed}, and robotics \cite{shorinwa2024distributed}.

Distributed optimization over hierarchical networks (or simply hierarchical optimization) has attracted increasing attention due to its 
relevance in large-scale systems where agents are naturally organized in multi-level topologies. 
Such structures commonly arise in many applications
\cite{biswas2022decentralized, nguyen2023distributed, reddy2024supply, noah2024distributed}.
For example, in an electrical power distribution grid, each load, such as a building, is a leaf node with internal variables, dynamics, constraints, and total power demand determining its operating cost.
Substations serving clusters of loads include transformers and equipment, %and have
with their own dynamics, constraints, and aggregate demand based on connected loads.
Each substation and its associated loads form a subtree.
A central grid coordinator at the tree's root maintains grid stability and safety while providing services such as demand response \cite{biswas2022decentralized, tsaiCommunicationEfficientDistributedDemand2017a,sianoDemandResponseSmart2014a}, by coordinating substations and loads.
In smart grids, two-way communication between nodes and their parents supports real-time demand adjustment and coordination, thus minimizing costs, ensuring reliability, and enhancing grid service performance.
This setup results in a hierarchical optimization problem solvable in a distributed fashion.

Unlike flat peer-to-peer networks, hierarchical networks impose parent–child relationships and asymmetric information flows, which introduce additional challenges for distributed optimization, particularly in terms of coordination, scalability, and communication efficiency.
Classical distributed optimization methods designed for fully connected or general graphs, \eg \cite{nedic2009distributed, nedic2010constrained, motaDADMMCommunicationEfficient2013, makhdoumi2017convergence, yang2022survey}, often \emph{fail to fully exploit the structural properties}, leading to unnecessary communication overhead or reliance on centralized coordination.
In particular, general hierarchical optimization problems can be solved by two simple ADMM-based methods.
The first method, which we call the nested ADMM (nADMM) and was presented in \cite{khakiHierarchicalADMMBased2018,zhangCooperativeLoadScheduling2020a}, solves a hierarchical sharing problem by applying classical ADMMs recursively to each layer of the tree, where each subtree is regarded by the ADMM applied at its parent as a single node for optimization purposes.
The algorithm therefore involves multiple nested ADMM loops, one for each level of the tree.
The second method, called the flattened ADMM (fADMM), flattens a tree network into a star network by establishing relay communication between each node and the root node. 
Conventional ADMM is then applied to the resulting star network, treating the original hierarchical structure as an equivalent star network.
Both methods incur significant communication overhead, as we will show later in the experiments.

\changed{Recent communication-efficient ADMM methods reduce communication through additional local mixing, selective and quantized messages, or application-specific message elimination.
The method in \cite{huangCommunicationEfficientDecentralized2026} addresses decentralized composite consensus over general peer-to-peer graphs and uses a symmetric ADMM with two consecutive neighbor exchanges per iteration to incorporate information beyond immediate neighbors.
Authors in \cite{duarteCommunicationefficientADMM2026a} considers an optimal sharing problemand reduces transmitted bits by predicting local proximal responses with Gaussian process regression, selectively querying agents, and adaptively quantizing their messages.
The method in \cite{iqbalCommunicationEfficientDistributed2026} specializes ADMM to distributed Kalman filtering over an undirected estimator network and reduces message payload by eliminating the exchange of dual variables between neighboring estimators.
However, none of these approaches addresses general multilevel (hierarchical) optimization on a tree with affine constraints coupling parent and child decision variables.
}

Due to the above challenges, there is a need for \emph{structure-aware distributed optimization} frameworks that explicitly leverage the hierarchical topology to enable scalable, communication-efficient, and fully distributed computation.
This paper presents a novel ADMM-based algorithm, called the hierarchical ADMM (hADMM), that effectively exploits the hierarchical structure to %drastically
improve communication efficiency.
Our \textbf{main contributions} are summarized as follows:
\begin{itemize}
    \item We propose a novel hADMM algorithm for efficiently solving general hierarchical optimization problems over arbitrary tree-structured networks, achieving substantial reductions in network communication costs in comparison with the fADMM and the nADMM.
    \changed{Unlike nADMM and fADMM, hADMM follows the original parent-child edges: query information is passed top-down, local primal responses are returned bottom-up, and each node communicates only with its parent and children.}
    \item 
    We provide rigorous theoretical analyses establishing the convergence and convergence rate of the proposed method. 
    In particular, the hADMM is guaranteed to converge under general convexity assumptions, even in the presence of nonsmooth convex cost functions.
    \item We demonstrate the superior communication efficiency of the hADMM compared with the fADMM and the nADMM  and validate the theoretical results through comprehensive and systematic numerical experiments. 
\end{itemize}

\noindent\textbf{Notation:}
Let $\RR$, $\RR_{>0}$, $\bbN$, and $\bbN_{>0}$ be the sets of real numbers, positive real numbers, integers, and positive integers, respectively.
\changed{
For a finite-valued convex function $f: \bbR^n \rightarrow \bbR$, $\partial f(x)$ denotes the subdifferential of $f$ at $x$.
In addition, $
\partial f(x)\neq\varnothing \; \forall x \in \bbR^n$.
For a real sequence $\{a_k\}$, $\bar a$ is a cluster point of  $\{a_k\}$  if, for given  $\epsilon \in \bbR_{>0}$  and given  $N>0$,  $\exists n \in \bbN_{>0}$  such that  $\Vert a_n - \bar a\Vert_2 \leq \epsilon$.
}
For $D \in \bbN_{>0}$, define $\bbN_D = \{1,\dots, D\}$.
In symmetric block matrices, the asterisk $(*)$ is used as an ellipsis for terms induced by symmetry.
For integers $n, m\in \bbN_{>0}$, $I_n \in \RR^{n\times n}$ denotes the identity matrix of size $n$, $0_n$ is the $n$-dimensional zero vector, and $0_{n\times m}$ is the $n \times m$ zero matrix.
For an ordered index list $\calI = \{i_1, i_2,\dots, i_n \}$, $i_k \in \bbN_{>0}$, and real matrices $M_{i_1}, M_{i_2}, \dots, M_{i_n}$ in appropriate dimensions, let $\{M_i\}_{i \in \calI}$ be the ordered list of the matrices, and denote
\begin{align*}
  &[M_{i}]_{i \in \calI} =
    \begin{bmatrix}
      M_{i_1}^\top, M_{i_2}^\top, \dots, M_{i_n}^\top
    \end{bmatrix}^\top,
  \\
  &[M_{i}]_{i \in \calI}^{\diag} = \diag(M_{i_1}, \dots, M_{i_n}) =
    \begin{bmatrix}
      M_{i_1} \\&\ddots\\& &M_{i_n}
    \end{bmatrix}
    \text.
\end{align*}
For a set $\calS$, $|\calS|$ is the number of elements in $\calS$. 
For a real matrix $M$, $\Vert M \Vert_2$ denotes the matrix 2-norm (spectral norm) of $M$, which is %equal to
the largest singular value of $M$.
For two square \changed{symmetric} matrices $A$ and $B$ of the same dimensions, $A$ $\succ(\succeq)$ $B$ implies that $A-B$ is a positive (semi)definite matrix.
For matrices $U_1, \dots, U_{n-1}$, $D_1, \dots, D_n$, and $L_1,\dots, L_{n-1}$ in appropriate dimensions, denote the tridiagonal block matrix 
\begin{align*}
  \tridiag\left(\{U_i\}_{i \in \bbN_{n-1}},  \{D_i\}_{i \in \bbN_n}, \{L_i\}_{i \in \bbN_{n-1}}\right) = 
  \\
  \begin{bmatrix}
    D_1 &U_1 & & \\
    L_1 &D_2 &\ddots & \\
      &\ddots &\ddots &U_{n-1} \\
      & &L_{n-1} &D_n
  \end{bmatrix}\text.
\end{align*}

\section{Problem Formulation and Related Literature}
\label{sec:prob-formulation}

This section formulates a hierarchical optimization problem defined over a tree network.
We then present the nested and flattened ADMM schemes, which serve as baseline approaches for the comparative evaluation of our proposed methods.

\subsection{Hierarchical optimization over tree networks}
\label{sec:tree-opt}

\begin{figure}[!tb] 
  \centering
  \includegraphics[width = \linewidth]{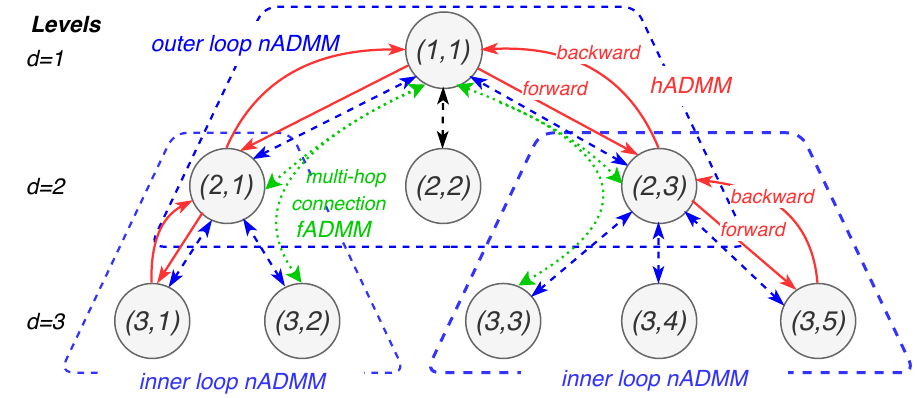}
  \caption{An example of bidirectional tree networks: the root node, parent nodes, and leaf nodes are red, blue, and green, respectively.  \changed{It also illustrates the nested ADMM scheme (blue dashed line), flatten ADMM scheme (green dotted line), hierarchical ADMM scheme (red solid line) presented in Section~\ref{sec:nADMM}, \ref{sec:flattenADMM}, and \ref{sec:hADMM}}, respectively.}
  \label{fig:treeNet}
\end{figure}

Let us consider a bidirectional tree network $\calG$ with $D \in \bbN_{>0}$  levels (layers).
A pair $(d,i)$ stands for the $i$-th node at level $d$, where $d=1$ is the root level and $(1,1)$ is the one and only root node.
A parent node is a node at level $d \in \{1,\dots,D-1\}$ which is connected to at least a node at the next level $d+1$, called a child node.
A node that is not a parent node is called a leaf node.
Each non-root node has exactly one parent node and may have multiple child nodes.
A node at level $d$ can communicate with only its parent node and its child nodes through their direct connections, hence it is $(d-1)$ hops from the root node in terms of communication.

Let $\calV \subset \bbN_{>0}^2$ be the set of all nodes.
For node $(d,i) \in \calV$, denote $\calC_{d,i} = \{j \in \bbN_{>0} | (d+1,j) \in \calV \text{~is a child of~} (d,i) \}$ as the set of its children.
Let $\calP_d = \{i \in \bbN_{>0}| (d,i) \in \calV, \calC_{d,i} \neq \emptyset \}$ be the set of all parent nodes at level $d$ and $\calL_d= \{i \in \bbN_{>0}| (d,i) \in \calV, \calC_{d,i} = \emptyset \}$ be the set of all leaf nodes at level $d$.
Note that $\calP_d \cup \calL_d$ contains the indices of all nodes at level $d$.
For example, consider the tree network with three levels ($D=3$) in Fig.~\ref{fig:treeNet}.
The first layer includes the root node denoted by $(1,1)$ with $\calP_1 = \{1\}$, $\calC_{1,1} = \{1,2,3\}$, and $\calL_1 = \emptyset$.
The second layer includes three nodes $(2,1), (2,2)$, and $(2,3)$, with $\calP_2 = \{1,3\}$, $\calL_2 = \{2\}$, $\calC_{2,1} = \{1,2\}$, $\calC_{2,2} = \emptyset$, and $\calC_{2,3} = \{3,4,5\}$.
The third layer has only leaf nodes, with $\calP_3 = \emptyset$ and $\calL_3 = \{1,2,3,4,5\}$.

Each node $(d,i)$ has a vector of decision variables $x_{d,i} \in \RR^{n_{d,i}}$ 
and a cost function $f_{d,i}(x_{d,i})$.
This paper considers the following \textit{hierarchical optimization} problem over $\calG$:
\begin{subequations}
\label{eq:prob-tree-opt}
\begin{align}
    \minimize \; &\changed{J =} \sum_{d=1}^D \bigg( \sum_{i\in\calP_d}  f_{d,i}(x_{d,i}) \!+\!   
    {\sum_{j \in \calL_d} f_{d,j}(x_{d,j})}  \bigg),
    \label{eq:prob-tree-opt:obj}
    \\
    \text{s.t.} \; & A_{d,ij} x_{d,i} + B_{d,ij} x_{d+1,j} = c_{d,ij}\text,
    \label{eq:tree-cstr}
\end{align}
\end{subequations}
where \eqref{eq:tree-cstr} describes the relationship between node $(d+1,j)$ and its parent $(d,i)$ and holds for all $d \in \bbN_{D-1}$, %$= 1,\dots,D-1$, 
$i \in \calP_d$, and $j\in \calC_{d,i}$,
with matrices $A_{d,ij} \in \RR^{s_{d,ij}\times n_{d,i}}$, $B_{d,ij} \in \RR^{s_{d,ij}\times n_{d+1,j}}$, and 
vector $c_{d,ij} \in \RR^{s_{d,ij}}$.
It is assumed that $f_{d,i}: %(x_{d,i}): 
\changed{\bbR^{n_{d,i}}\rightarrow \bbR}$ are finite-valued convex functions for all $(d,i) \in \calV$, thus the optimization problem \eqref{eq:prob-tree-opt} is convex.

\subsection{Disjoint problems over tree networks}
\label{sec:app}

The \textit{disjoint problem} is a special case of the hierarchical optimization problem \eqref{eq:prob-tree-opt}, that is often encountered in practice.
In this problem, the decision vector of a parent node at level $d$ is the concatenation of the decision  vectors of its children,  \changed{in the increasing order of the  indices in $\calC_{d,i}$}, that is, $x_{d,i} = [x_{d+1,j}]_{j\in \calC_{d,i}}$.
The resulting problem
\begin{align}
    \changed{\minimize \; J} \; \text{s.t.} \; x_{d,i} = [x_{d+1,j}]_{j\in \calC_{d,i}}, d \in \bbN_{D-1}, i \in \calP_d
    \label{prob:disjoint}
\end{align}
\changed{minimizes the same objective $J$ as in \eqref{eq:prob-tree-opt:obj}  but with the coupling constraint \eqref{eq:tree-cstr} specialized to the concatenation constraint. 
It} can be formulated as the general problem \eqref{eq:prob-tree-opt} with
$n_{d,i} = \sum_{j\in \calC_{d,i}} n_{d+1,j}$, 
$c_{d,ij} = 0$,
$B_{d,ij} = -I_{n_{d+1,j}}$, and
$A_{d,ij} = \begin{bmatrix} 0 &\dots &I_{n_{d+1,j}} &\dots &0\end{bmatrix} \in \RR^{n_{d+1,j} \times n_{d,i}}$.
\changed{Here, the block columns of $A_{d,ij}$ are ordered consistently with the order of the child nodes in $\calC_{d,i}$, the block corresponding to %the child node 
$j \in \calC_{d,i}$ is the identity matrix $I_{n_{d+1,j}}$,
and %all the other blocks
the rest are zeros}. % matrices.
Thus, $A_{d,ij}$ effectively selects the variables $x_{d+1,j}$ corresponding to the child node $j$ in the vector $x_{d,i}$.
Note that $\left[A_{d,ij}\right]_{j\in \calC_{d,i}} = I_{n_{d,i}}$.

The rest of this section describes two standard algorithms for solving the general hierarchical optimization problem \eqref{eq:prob-tree-opt}.

\subsection{Nested ADMM (nADMM) algorithm}
\label{sec:nADMM}

The standard ADMM can be adapted to solve \eqref{eq:prob-tree-opt}.
Specifically, ADMM solves the two-block coupled problem \cite{boyd2011distributed}
\begin{align}
    \minimize_{u,v} \; g(u) + h(v) \;\text{s.t.}\; Au + Bv = c \text,
    \label{eq:prob-basicADMM}
\end{align}
with convex functions $g$ and $h$, by using the following uncoupled steps, for $\rho > 0$,
\begin{align} \label{eq:basicADMM_iter}
    % \!\!\!\!\!
\begin{cases*}
    u^{k+1} = \argmin_u g(u) \!+\! \frac{\rho}{2}\Vert Au \!+\! B v^{k} \!-\! c \!+\! \frac{1}{\rho} y^k\Vert_2^2,
    \\
    v^{k+1} = \argmin_v h(v) \!+\! \frac{\rho}{2}\Vert Au^{k+1} \!+\! B v \!-\! c \!+\! \frac{1}{\rho} y^k \Vert_2^2,
    \\
    y^{k+1} = y^k + \rho(Au^{k+1} + B v^{k+1} - c)\text.
\end{cases*}
\end{align}
As shown in \cite{boyd2011distributed}, the iterations \eqref{eq:basicADMM_iter} ensure the stable sequence $(u^k, v^k, y^k)$, 
which converges to the optimal solution when $g$ and $h$ are closed proper convex functions, and $A$ or $B$ is a full rank matrix \cite{heAlternatingDirection2012}.
The authors in \cite{goldstein2014fast} show that the ADMM  \eqref{eq:basicADMM_iter} converges in the dual objective with linear rate, and they further propose an accelerated variant of \eqref{eq:basicADMM_iter}\changed{.} 
Other results on linear convergence of the ADMM are shown in \cite{deng2016global}.
A more general variant of \eqref{eq:prob-basicADMM}, called the multiple-block problem, 
\begin{align}
    \minimize_{u,v_1,\dots, v_N}~ g(u) + \sum_{i=1}^{N} h_i(v_i)
    \;\text{s.t.}\; Au + \sum_{i=1}^N B_i v_i = c\text,
    \label{eq:prob-mulblock}
\end{align}
has been solved by Jacobi-type ADMM \cite{linGlobalLinear2015} and proximal Jacobi-type ADMM \cite{deng2017parallel} with linear convergence guarantee. % rate.
\changed{
These multi-block ADMM variants provide important convergence guarantees, but their single global coupling constraint naturally induces a star topology with a central coordinator. Stacking the variables and constraints into a large block-structured matrix does not yield a distributed implementation on the original tree, since the coordinator must still collect aggregated information while intermediate nodes relay messages to emulate the star topology, similar to the flattened ADMM algorithm presented in Section~\ref{sec:flattenADMM}.}

Both \eqref{eq:prob-basicADMM} and its generalization \eqref{eq:prob-mulblock} are formulated for star networks, %(with depth $D=2$),
where $g$ is the cost function of the coordinator and $h_i$ is the local cost function of the leaf node $i$.
For solving the hierarchical optimization \eqref{eq:prob-tree-opt}, the authors of \cite{khakiHierarchicalADMMBased2018,zhangCooperativeLoadScheduling2020a,etedadi2024hierarchical} apply standard ADMM \eqref{eq:basicADMM_iter} recursively to each layer of the tree structure.
We term this method \textit{nested ADMM} (nADMM). 
Other variants of %An other example of
nADMM can be found in \cite{mirzaeifard2025smoothing,azimi-abarghouyiHierarchicalFederated2025}, which address federated learning over tree networks.
In nADMM, each subtree is regarded by the ADMM applied at its parent node as a single node for optimization purposes.
The algorithm therefore involves multiple nested loops, one for each level of the tree.
In Fig.~\ref{fig:treeNet}, the root node $(1,1)$ coordinates the global optimization via an outer ADMM loop (red box). 
Each query to a depth-2 cluster node, such as $(2,1)$ or $(2,3)$, triggers an inner ADMM loop (blue box) to solve its subtree's local proximal minimization. 
Each inner loop completes multiple query-response rounds before returning a response to the root.

While the algorithms in \cite{khakiHierarchicalADMMBased2018} and \cite{zhangCooperativeLoadScheduling2020a} are designed only for the disjoint problem \eqref{prob:disjoint} over tree networks with depth $D = 3$, we generalize the nADMM for the hierarchical optimization problem \eqref{eq:prob-tree-opt} over a general tree network in 
\ifdefined\SubmittedPaper
\cite[Appendix~A]{hADMMfullpaper}.
\else
Appendix~\ref{app:nADMM}.
\fi

\subsection{Flattened ADMM (fADMM) algorithm}
\label{sec:flattenADMM}

In the flattened ADMM, the global objective function \eqref{eq:prob-tree-opt:obj} is flattened by rewriting it as a single sum of the local objective functions of all individual nodes, then the standard ADMM is applied where all nodes are considered equal, \ie each iteration involves solving the proximal minimization problem of 
every node in the tree.
A non-root parent node will relay the data exchanges between its children and its parent. 
From the computation perspective, the tree is flattened into a star structure.
From the communication perspective, however, the tree structure is retained as the data flow follows the tree.

\changed{The fADMM is used as a baseline that applies standard star-topology ADMM to the flattened hierarchical problem.
It is not an established hADMM algorithm, but a direct baseline that ignores the hierarchy in the optimization formulation.}

\subsection{Communication and computation of nADMM and fADMM}
\label{sec:comm-cost}

The fADMM incurs significant communication overhead as messages must be exchanged between the root and every node in the network along a multi-hop path between them, where all nodes in the path must relay messages in both directions.
Such a process is particularly costly in terms of communication when the non-root node resides at the deepest level of the tree \cite{zhangCooperativeLoadScheduling2020a}.
Moreover, the dependence of fADMM on multi-hop communication increases network protocol complexity and generates additional traffic on the underlying infrastructure, which can compromise its overall reliability. 
The overall computation cost of the fADMM is proportional to the network size (\ie the number of nodes) and the number of iterations.

In contrast to the fADMM, the nADMM only requires direct (single-hop) communication between a parent node and a child node.
However, this advantage comes at the cost of high computational complexity and does not reduce the overall communication cost.
Specifically, each iteration of an outer loop involves recursive computations in the inner loops, leading to an exponential growth in communication and computation costs as the network depth increases.
As a result, lower-level nodes perform more iterations than 
do upper-level nodes.
Consequently, the root node incurs the least communication and computation costs, while nodes at the deepest level have the highest communication and computation burden.
Overall, the total communication and computation costs of the nADMM are higher than those of the fADMM.

\changed{The example in Fig.~\ref{fig:treeNet} illustrates these costs.
Let $T_{2,1}$ and $T_{2,3}$ denote the numbers of inner ADMM iterations at nodes $(2,1)$ and $(2,3)$ per outer nADMM iteration.
Root-level query-response exchanges with the three level-2 nodes require $6$ transmissions.
The inner solves at nodes $(2,1)$ and $(2,3)$ require, respectively, $4T_{2,1}$ and $6T_{2,3}$ additional parent-child transmissions and $3T_{2,1}$ and $4T_{2,3}$ local subproblem solves.
Thus, each outer nADMM iteration requires at least $6+4T_{2,1}+6T_{2,3}$ transmissions and $2+3T_{2,1}+4T_{2,3}$ subproblem solves, where $2$ accounts for the root and leaf node $(2,2)$.
By contrast, each fADMM iteration requires $9$ local subproblem solves and, due to the multi-hop communication between the root and the lower-level nodes, $2 \times (3+2\times 5)=26$ transmissions, including $2 \times (2+3)=10$ transmissions relayed by nodes $(2,1)$ and $(2,3)$.
Hence, nADMM incurs nested-iteration costs, whereas fADMM incurs multi-hop relay traffic.}

\section{Hierarchical ADMM (hADMM) Algorithm}
\label{sec:hADMM}

We propose an algorithm, called \emph{hierarchical ADMM} (hADMM), for solving the hierarchical optimization problem \eqref{eq:prob-tree-opt}, where computation and communication naturally follow the tree structure of the network, resulting in computation and communication savings compared to the nADMM and fADMM algorithms.
These savings will be demonstrated in the numerical examples in Section~\ref{sec:examples}.
To improve readability of the mathematical derivation afterward, selected important notations are summarized in Table~\ref{tab:notations}.

\begin{table}[!t]
  \caption{Summary of Important Notations}
  \label{tab:notations}
  \centering
  \begin{tabular}{ll}
    $D$ & Depth of the tree network $\calG$\\
    $(d,i)$ & $i$-th node at $d$-th level of $\calG$ \\
    $\calC_{d,i}$ & Index set of all children of node $(d,i)$\\
    $\calP_d$ & Index set of all parent nodes at level $d$\\
    $\calL_d$ & Index set of all leaf nodes at level $d$\\
    $x_{d,i}$ & Decision variable vector of node $(d,i)$ \\
    $f_{d,i}$ & Cost function of node $(d,i)$ \\
    $x_d$ & Concatenation of all node variables at level $d$\\ 
    $s_{d,ij}$ &Number of scalar constraints between node $(d,i)$ 
    \\
    &and its child $(d+1, j)$
    \\
    %\hline
    \end{tabular}
\end{table}

\subsection{Augmented Lagrangian function}
\label{sec:lagrangian}

This section establishes an augmented Lagrangian function 
for the problem \eqref{eq:prob-tree-opt}.
We first define some useful notations.
The constraints \eqref{eq:tree-cstr} for $j\in \calC_{d,i}$ can be combined into an aggregate constraint between the parent node $(d,i)$ and all its child nodes $(d+1,j)$ as
\(A_{d,i} x_{d,i} + B_{d,i}[x_{d+1,j}]_{j\in \calC_{d,i}} = c_{d,i}\), 
where
$ A_{d,i} =  [A_{d,ij}]_{j \in \calC_{d,i}}$, 
$B_{d,i} =  [B_{d,ij}]_{j \in \calC_{d,i}}^{\diag}$, 
and $c_{d,i} = \big[ c_{d,ij}\big]_{j \in \calC_{d,i}}$. 
Let us define the vector of all node variables at level $d$ as
\(x_d =  \left[ [x_{d,i}]_{i \in \calP_d}^\top, [x_{d,i}]_{i\in \calL_d}^\top \right]^\top \in \RR^{n_d}\) 
and the sum of all local cost functions at level $d$ as
$f_d(x_d) = \sum_{i\in\calP_d \cup \calL_d}  f_{d,i}(x_{d,i})$.
The hierarchical optimization problem \eqref{eq:prob-tree-opt} can be expressed as the following multiple-layer optimization problem:
\begin{subequations}
    \label{prob:chain_mul_block_opt}
\begin{align} 
    \minimize_{x_1, \dots, x_D}\; &J = \sum_{d=1}^D f_d(x_d),
    \\
    \text{s.t.}\; & A_d x_d + B_d x_{d+1} = c_d, \; \forall d \in \bbN_{D-1}\text,
    \label{prob:chain_cstr}
\end{align}
\end{subequations}
where
$A_d =  \big[[A_{d,i}]_{i\in \calP_d}^{\diag},~ 0  \big] \in \bbR^{s_d \times n_d}$ with $s_d = \sum_{i\in\calP_d} \sum_{j\in \calC_{d,i}} s_{d,ij}$,
$B_d =  [B_{d,i}]_{i\in \calP_d}^{\diag} M_d$, 
$c_d = \big[ c_{d,i}\big]_{i\in \calP_d}$,
and $M_d$ is a permutation matrix for extracting the variables of all the child nodes at level $d+1$, \ie
$[[x_{d+1,j}]_{j\in \calC_{d,i}}]_{i\in \calP_d} = M_d x_{d+1}$.

We establish the following augmented Lagrangian for optimization problem \eqref{prob:chain_mul_block_opt} with penalty parameter $\rho > 0$:
\begin{multline}
  \label{def:Lagrangian}
  \changed{L(x_1,\dots,x_D; y_1,\dots,y_{D-1})}
  = \\
  f_D(x_D)
  + \sum_{d=1}^{D-1} \Big( f_d(x_d) + y_d^\top(A_dx_d + B_d x_{d+1} \!-\! c_d) \\
  + \frac{\rho}{2} \Vert A_dx_d + B_d x_{d+1} - c_d \Vert_2^2 \Big)\text,
\end{multline}
where
$y_d = [y_{d,i}]_{i\in \calP_d} \in \RR^{s_d}$,
$y_{d,i} = [y_{d,ij}]_{j\in \calC_{d,i}}$, and
$y_{d,ij}$ is the dual variable associated with the constraint \eqref{eq:tree-cstr} between the node $(d,i)$ and its child node $(d+1,j)$.
\changed{For brevity, the arguments of $L$ are omitted whenever they are clear from the context.}
The Lagrangian can be rearranged as a sum of multiple Lagrangians for each layer $d$ of the tree network as:
\begin{equation} \label{eq:tree-lag}
    L = \sum_{d=1}^{D}  \left( L_d^{\mathrm p} (x_d, x_{d+1}, y_d) + L_d^{\mathrm e} (x_d) \right),
\end{equation}
where $L_D^{\mathrm p} = 0$, $L_1^{\mathrm e} = 0$, $L_d^{\mathrm p}(x_d, x_{d+1}, y_d) = \sum_{i\in\calP_d}  \Big( f_{d,i}(x_{d,i}) \!+ \!
     y_{d,i}^\top (A_{d,i} x_{d,i} \!+\! B_{d,i} [x_{d+1,j}]_{j\in \calC_{d,i}} \!-\! c_{d,i}) 
     + 
    \frac{\rho}{2} \Vert  A_{d,i} x_{d,i} + B_{d,i} [x_{d+1,j}]_{j\in \calC_{d,i}} \!-\! c_{d,i} \Vert_2^2 \Big)$ for $1 \leq d \leq D\!-\!1$, 
    and $L^{\mathrm e}_{d}(x_d) = \sum_{i\in\calL_d}  f_{d,i}(x_{d,i})$ for $2 \leq d \leq D.$

\subsection{Hierarchical ADMM (hADMM) algorithm}
\label{sec:hADMM-algorithm}

The \emph{key idea} of our proposed hADMM algorithm is to hierarchically solve primal variables top-down from the root node to the leaf nodes, then update the dual variables bottom-up from the leaf nodes to the root node.
Specifically,  \changed{inspired by the Jacobi-type ADMM} \cite{linGlobalLinear2015}, we formulate the hADMM 
 for the hierarchical optimization problem \eqref{eq:prob-tree-opt} into a forward pass and a backward pass, \changed{along the tree structure}, as follows.

\noindent\textit{Forward:}
\begin{equation}
  \begin{split}
    x^{k+1}_1 &= \displaystyle\argmin_{x_1} L(x_1, x_2^k,\dots, x_D^k; \changed{y_1^k,\dots,y_{D-1}^k})
    \\
    x^{k+1}_2 &= \displaystyle\argmin_{x_2} L(x_1^{k+1},x_2, x_3^k, \dots, x_D^k; \changed{y_1^k,\dots,y_{D-1}^k})
    \\
    \cdots &
    \\
    x^{k+1}_{D} &= \displaystyle\argmin_{x_D} L(x^{k+1}_1, \dots, x_{D-1}^{k+1}, x_D; \changed{y_1^k,\dots,y_{D-1}^k})
  \end{split}
  \label{eq:forward}
\end{equation}

\noindent\textit{Backward:}
\begin{equation}
  \begin{split}
    y_{D-1}^{k+1} &= y_{D-1}^k \!+\!  \rho(A_{D-1} x_{D-1}^{k+1} \!+\! B_{D-1} x_D^{k+1} \!-\! c_{D-1}) \\
    y_{D-2}^{k+1} &= y_{D-2}^k \!+\!  \rho(A_{D-2} x_{D-2}^{k+1} \!+\! B_{D-2} x_{D-1}^{k+1} \!-\! c_{D-2}) \\
    \cdots & \\
    y_1^{k+1} &= y_1^k \!+\!  \rho(A_1 x_1^{k+1} + B_1 x_2^{k+1} - c_1)
  \end{split}
  \label{eq:backward}
\end{equation}
\changed{In the forward pass \eqref{eq:forward}, 
each subproblem minimizes the full augmented Lagrangian $L$ over one block $x_d$ while all the other blocks are fixed at their latest values.
Since only the terms of \eqref{eq:tree-lag} that contain $x_d$ affect that minimization, each update reduces to the layer-wise expressions \eqref{alg:root}--\eqref{alg:parent} derived below.}
It is noted  that the computation and communication between nodes in the forward and backward passes are distributed and follow the tree structure.
Indeed, let us consider any node $(d,i)$. 
The forward pass \eqref{eq:forward} results in three cases as follows. 

\noindent
(i) If $(d=1,i=1)$ is the root node, where $x_1 = x_{1,1}$:
\begin{align} 
  x^{k+1}_{1,1} 
  &= \argmin_{x_{1,1}} L_1^{\mathrm p} (x_1, x_2^k, y_1^k)
    \nonumber \\
  &= \argmin_{x_{1,1}}  f_{1,1}(x_{1,1}) \!+\!
    (y_{1,1}^{k})^\top \big( A_1 x_1 \!+\! B_1 x_2^k \!-\! c_1 \!\big)
    \nonumber \\
  &\qquad\qquad\qquad +  \frac{\rho}{2} \Vert A_1 x_1 \!+\! B_1 x_2^k  \!-\! c_1 \Vert_2^2.
\label{alg:root}
\end{align}

\noindent
(ii) If $(d,i)$ is a leaf node with the parent node $(d-1,p)$: % where $\calC_{d,i} = \emptyset$: 
\begin{align}
    x^{k+1}_{d,i} &= \argmin_{x_{d,i}} L_{d-1}^{\mathrm p}(x_{d-1}^{k+1}, x_d, y_{d-1}^k) + L_d^{\mathrm e}(x_d)
    \nonumber\\
    &=\argmin_{x_{d,i}}     f_{d,i}(x_{d,i})  \nonumber
    \\
    &\quad +\! (y_{d-1,p i}^k)^\top\!( A_{d-1,p i}x_{d-1,p}^{k+1} \!+\! B_{d-1,p i} x_{d,i} \!-\! c_{d-1,p i}) 
    \nonumber\\
    &\quad +\! 
     \frac{\rho}{2}\Vert  A_{d-1,p i}x_{d-1,p}^{k+1} \!+\! B_{d-1,p i} x_{d,i} \!-\! c_{d-1,p i} \Vert_2^2 \notag
     \\
    &= \changed{ \argmin_{x_{d,i}} f_{d,i}(x_{d,i}) \!+\! \frac{\rho}{2} \Vert B_{d-1,p i} x_{d,i} \!+\! q_{d-1,pi}^{k+1} \Vert_2^2}.
    \label{alg:leaf}
\end{align}
\changed{where $q_{d-1,pi}^{k+1} = A_{d-1,pi}x_{d-1,p}^{k+1} + \frac{1}{\rho} y_{d-1,pi}^{k} - c_{d-1,pi}$.}

\noindent
(iii) If $(d,i)$ is a non-leaf node with the parent node $(d-1,p)$: % where $d \neq 1$ and $\calC_{d,i}\not \neq \emptyset$:
\begin{align}
    \!\!x^{k+1}_{d,i} &= \argmin_{x_{d,i}} L_{d-1}^{\mathrm p} (x_{d-1}^{k+1}, x_d, y_{d-1}^k) + L_d^{\rm p} (x_d, x_{d+1}^k, y_d^k)
    \nonumber \\
    &= \argmin_{x_{d,i}}  f_{d,i}(x_{d,i}) + \changed{\zeta_{d,i}^k(x_{d,i})}
    \nonumber \\
    &\quad +\!  (y_{d-1,p i}^k)^\top\!(A_{d-1,p i}x_{d-1,p}^{k+1} \!+\! B_{d-1,p i} x_{d,i} \!-\! c_{d-1,p i})
    \nonumber \\
    &\quad +\! \frac{\rho}{2}\Vert  A_{d-1,p i}x_{d-1,p}^{k+1} \!+\! B_{d-1,p i} x_{d,i} \!-\! c_{d-1,p i} \Vert_2^2 \notag
    \\
    % &= \changed{\argmin f_{d,i}(x_{d,i})  + g_{d,i}^k(x_{d,i}) } \notag
    % \\
    % &\qquad \qquad \qquad \qquad + \! \changed{\frac{\rho}{2}\Vert  B_{d-1,p i} x_{d,i} + q_{d-1,p i}^{k+1}  \Vert_2^2}\text,
    &= \changed{\argmin f_{d,i}(x_{d,i}) +\frac{\rho}{2} \sum_{j\in\calC_{d,i}} 
    \Vert A_{d,ij} x_{d,i} + q_{d,ij}^k \Vert_2^2}  \notag
    \\
    &\qquad\qquad\qquad\qquad 
    \changed{+ \frac{\rho}{2}\Vert  B_{d-1,p i} x_{d,i} + q_{d-1,p i}^{k+1} \Vert_2^2}\text,
    \label{alg:parent}
\end{align}
\changed{
where $\zeta_{d,i}^k(x_{d,i}) = \sum_{j\in\calC_{d,i}}  \big( (y_{d,ij}^k)^\top ( A_{d,ij} x_{d,i} \!+\! B_{d,ij} x_{d+1,j}^k \!-\! c_{d,ij}) + \frac{\rho}{2}\Vert  A_{d,ij} x_{d,i} \!+\! B_{d,ij} x_{d+1,j}^k \!-\! c_{d,ij} \Vert_2^2 \big)$
and $q_{d,ij}^k = B_{d,ij} x_{d+1,j}^k \!+\! \frac{1}{\rho} y_{d,ij}^k \!-\! c_{d,ij}$
}

Based on these three cases, we summarize the hADMM algorithm in Algorithm~\ref{alg:hier}, which is a distributed implementation of the forward and backward passes \eqref{eq:forward} and \eqref{eq:backward}.

\changed{The \textbf{novelty} of the hADMM algorithm is a tree-aware rearrangement of the augmented Lagrangian that makes each node update depend only on its parent, children, and their edge-associated dual variables.
Thus, hADMM is not merely a multi-block ADMM applied to a tree-structured constraint matrix; its subproblems and communications remain local to the hierarchy.
Unlike nADMM, each hADMM iteration requires only one top-down primal pass and one bottom-up dual pass, without nested lower-level ADMM solves.}

\begin{algorithm}[tb!]
  \caption{{Distributed Implementation of the hADMM for solving the hierarchical optimization problem \eqref{alg:hier}}.}
  \label{alg:hier}
    \begin{algorithmic}[1]
      % \Require Tree network of agents.
      \Function{hADMM}{$(d,i), q$}
        \If{$(d,i)$ is leaf node}
          \State{Compute $x_{d,i}$ by \eqref{alg:leaf} \changed{with $q_{d-1,pi}^{k+1} = q$}}
          \State{Send $x_{d,i}$ to parent $(d-1,p)$}
          \State{\Return}
        \EndIf
        %\Else
          \If{$(d,i) = (1,1)$} \Comment{root node}
            \State{Compute $x_{1,1}$ by \eqref{alg:root}}
          \Else
            \State{Compute $x_{d,i}$ by \eqref{alg:parent} \changed{with $q_{d-1,pi}^{k+1} = q$}}
            \State{Send $x_{d,i}$ to parent $(d-1,p)$}
          \EndIf
          \For{$j \in \calC_{d,i}$} \Comment{in parallel}
            \State{$q_{d,ij} = A_{d,ij}x_{d,i} + \frac{1}{\rho} y_{d,ij} - c_{d,ij}$}
            \State{Send $q_{d,ij}$ to $(d+1,j)$} \Comment{to child $(d+1,j)$}
            \State $x_{d+1,j} \!=\! \Call{hADMM}{(d+1,j),q_{d,ij}}$ \Comment{from child}
            \State{$r_{d,ij} = A_{d,ij} x_{d,i} + B_{d,ij} x_{d+1,j} - c_{d,ij}$}
            \State{Update $y_{d,ij} \leftarrow y_{d,ij} + \rho r_{d,ij}$}
          \EndFor
      \EndFunction
      \Repeat
        \State 
          $\Call{hADMM}{(1,1), []}$ \Comment{Solve \eqref{eq:prob-tree-opt} at root node}
      \Until{convergence as in Section \ref{sec:stop-crit}}
    \end{algorithmic}
\end{algorithm}

\subsection{Stopping criterion}
\label{sec:stop-crit}
\changed{
A non-leaf node $(d,i)$ can compute  $\bar{r}_{d,i}^{k+1} = \max_{j\in \calC_{d,i}} \Vert r_{d,ij}^{k+1} \Vert_\infty$ and  $\bar{s}_{d,i}^{k+1} = \max_{j\in \calC_{d,i}} \Vert s_{d,ij}^{k+1} \Vert_\infty$ where $r_{d,ij}^{k+1} = A_{d,ij} x_{d,i}^{k+1} + B_{d,ij} x_{d+1,j}^{k+1} - c_{d,ij}^{k+1}$ is the primal residual and $s_{d,ij}^{k+1} = \rho A_{d,ij}^\top B_{d,ij} (x_{d+1,j}^{k+1} - x_{d+1,j}^{k})$ is the dual residual \cite[Chapter 3.3]{boyd2011distributed} for its children $j\in \calC_{d,i}$.
Inspired by the stopping criterion used in \cite{stellato2020osqp} with $\epsilon_\text{rel}=0$,
the stopping criterion of the hADMM is given by
\begin{align}
  R^{k+1} = \max_{d \in \bbN_{D-1}} \max_{i\in \calP_d}  R_{d,i}^{k+1} \leq \varepsilon_\mathrm{abs} \text,
  \label{eq:stop-criterion}
\end{align}
where $\varepsilon_\mathrm{abs} > 0$ is a predefined tolerance and $R_{d,i}^{k+1} = \max( \bar{r}_{d,i}^{k+1}, \bar{s}_{d,i}^{k+1})$.
Note that this stopping criterion can be checked in a distributed manner as follows.
In the backward pass, each non-leaf node $(d,i)$ computes its maximum residual $\bar R_{d,i}^{k+1} = \max\{R_{d,i}, \max_{j\in \calC_{d,j}}\{ \bar R_{d+1,j}^{k+1}\}\}$, where $\bar R_{d+1,j}^{k+1} = 0$ if $(d+1,j)$ is a leaf node, then sends $\bar R_{d,i}^{k+1}$ to its parent node.
Recursively until $d = 1$, the root has $ \bar R_{1,1}^{k+1} = \max\{R_{1,1}, \max_{j\in \calC_{1,1}}\{ \bar R_{2,j}^{k+1}\}\} = R^{k+1}$ as in \eqref{eq:stop-criterion}.}

\changed{%
\begin{remark}
  In hADMM, each parent node $(d,i)$ stores the constraint parameters $A_{d,ij}$, $B_{d,ij}$, and $c_{d,ij}$ and the dual variable $y_{d,ij}$ for all $j\in\calC_{d,i}$. It sends each child a query $q_{d,ij}$ in the forward pass and receives the child variable $x_{d+1,j}$ in the backward pass.
\end{remark}
\begin{remark}
  Upon receiving $q_{d,ij}$, child node $(d+1,j)$ immediately updates $x_{d+1,j}$ using \eqref{alg:leaf} or \eqref{alg:parent} and returns the updated value.
  This trigger-response mechanism allows nodes at the same level to solve their subproblems without waiting for one another.
\end{remark}%
}

\section{Convergence Analysis of hADMM}
\label{sec:convergence}

This section provides theoretical guarantees for the convergence of the proposed hADMM in  Algorithm \ref{alg:hier} and  a theoretical analysis of its convergence rate.
We consider the following scenarios in our convergence analysis, 
based on \cite{cao2019dynamic, deng2017parallel, deng2016global}, that are commonly assumed in the literature.
The convergence analysis results are summarized in Table~\ref{tab:con_res}.

\begin{table}[!t]
  \centering
  \caption{Scenarios and convergence results of the hADMM.}
  \label{tab:con_res}
  \setlength{\tabcolsep}{3pt}
  \begin{tabular}{@{}p{0.50\columnwidth}|p{0.46\columnwidth}@{}}
    \textbf{Scenario} & \textbf{Theoretical results}
    \\
    \hline
    \changed{\ref{scn:convex_rank}: $f_1,f_2,\dots,f_D$ are convex,  $A_1, B_1, \dots,B_D$ are full column rank} & \changed{Theorem \ref{thm:converge}: residual convergence, bounded primal and dual sequences.}
    \\
    \hline
    \changed{\ref{scn:str_convex}: $f_1,f_2,\dots,f_D$ are differentiable strongly convex} & Theorem \ref{thm:convergence_strcon}: convergence
    \\
    \hline
    \changed{\ref{ass:Lipschitz}: $f_1,f_2,\dots,f_D$ are differentiable strongly convex, $f_2,\dots,f_D$ have Lipschitz-continuous gradients} & Theorem \ref{thm:convergence_rate}: linear convergence rate.
  \end{tabular}
\end{table}

\begin{scenario} \label{scn:convex_rank}
    Functions $f_1, f_2,\dots$ and $f_D$ are proper,  l.s.c and convex, 
    \changed{matrix $A_1$ has full column rank, and matrices $B_1, \dots, B_{D-1}$ have full column rank.}
\end{scenario}

\begin{scenario} \label{scn:str_convex}
  \changed{
  Functions $f_1, f_2,\dots, f_D$ are differentiable strongly convex with moduli $\alpha_d > 0$,} that is, for all $z,z^\prime \in {\rm Dom}(f_d)$,
  \begin{math} 
    f_d(z) - f_d(z^\prime) \geq \nabla f_d^\top(z^\prime) (z-z^\prime) + \alpha_d \Vert z - z^\prime \Vert_2^2\text.
  \end{math}
\end{scenario}

\subsection{Convergence of hADMM}
\label{sec:convergence_hADMM}

\changed{
We assume that the convex optimization problem \eqref{prob:chain_mul_block_opt} has at least one strictly feasible point (Slater's Condition) and admits an optimal solution.
Under these conditions, solving \eqref{prob:chain_mul_block_opt} is equivalent to  finding $([x_d^\star]_{d \in \bbN_D}, [y_d^\star]_{d \in \bbN_{D-1}})$ 
satisfying the generalized Karush-Kuhn-Tucker (KKT) condition for 
\eqref{eq:prob-tree-opt} as follows:
}
\begin{subequations}
  \label{eq:prf-KKTs}
  \begin{align}
    &0 \in \partial f_1 (x_1^*) + A_1^\top y_1^*
      \label{thm:prf_KKT1}
    \\
    &0 \in \partial f_D (x_D^*) + B_{D-1}^\top y_{D-1}^*
      \label{thm:prf_KKT2}
    \\
    &0 \in \partial f_d (x_d^*) + A_d^\top y_d^* + B_{d-1}^\top y_{d-1}^*, \; d=2,\dots,D\!-\!1
      \label{thm:prf_KKT3}
    \\
    &A_d x_d^* + B_d x_{d+1}^\star = c_d,~d =1,2\dots, D-1\text.
      \label{thm:prf_KKT4}
  \end{align}
\end{subequations}

Let $\calO^\star$ be the set of all solutions of the KKT condition \eqref{thm:prf_KKT1}-\eqref{thm:prf_KKT4}.
\changed{The following theorem presents convergence of the proposed hADMM in Algorithm \ref{alg:hier} under Scenario~\ref{scn:convex_rank}.}

\begin{theorem} \label{thm:converge}
  \changed{Under Scenario \ref{scn:convex_rank}, the sequence $\{[x_d^k]_{d\in \bbN_D}, [y_d^k]_{d\in \bbN_{D-1}}\}_k$ obtained by Algorithm \ref{alg:hier} is bounded, the primal residuals satisfy $r_d^k \to 0$ for all $d\in\bbN_{D-1}$, and every cluster point of the sequence belongs to $\calO^\star$.}
  \changed{If, in addition, $\calO^\star$ is a singleton, then the whole sequence converges to the unique point $([x_d^\star]_{d \in \bbN_D}, [y_d^\star]_{d \in \bbN_{D-1}})$ in $\calO^\star$.}
\end{theorem}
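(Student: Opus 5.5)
The plan is a Lyapunov (Fejér-type) argument adapted from the convergence proofs of Gauss--Seidel multi-block ADMM, exploiting the chain structure of \eqref{prob:chain_mul_block_opt}: each block $x_d$ couples only to $x_{d-1}$ and $x_{d+1}$.

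\textbf{Optimality conditions of the forward pass.} Fix any $([x_d^\star],[y_d^\star])\in\calO^\star$. The optimality condition of the $x_d$-update in \eqref{eq:forward}, with the backward update \eqref{eq:backward} substituted, reads
\begin{equation*}
0 \in \partial f_d(x_d^{k+1}) + A_d^\top\big(y_d^{k+1} - \rho B_d(x_{d+1}^{k+1}-x_{d+1}^k)\big) + B_{d-1}^\top y_{d-1}^{k+1}.
\end{equation*}
The predecessor $x_{d-1}$ is already fresh in the $x_d$-update. Only the successor $x_{d+1}$ is stale, so only the single correction term $-\rho A_d^\top B_d(x_{d+1}^{k+1}-x_{d+1}^k)$ appears.

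\textbf{Monotonicity and the basic inequality.} Next, combine this inclusion with the KKT inclusions \eqref{thm:prf_KKT1}--\eqref{thm:prf_KKT3} through monotonicity of $\partial f_d$, pairing each with $x_d^{k+1}-x_d^\star$, and sum over $d$. Using \eqref{thm:prf_KKT4} and the identity $r_d^{k+1}=(y_d^{k+1}-y_d^k)/\rho$, the cross terms $\langle y_d^{k+1}-y_d^\star, A_d(x_d^{k+1}-x_d^\star)+B_d(x_{d+1}^{k+1}-x_{d+1}^\star)\rangle$ telescope into $\frac1\rho\langle y_d^{k+1}-y_d^\star, y_d^{k+1}-y_d^k\rangle$. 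The target is an inequality of the form
\begin{equation*}
V^{k+1} \le V^k - c\sum_{d}\Big(\Vert r_d^{k+1}\Vert_2^2 + \Vert B_d(x_{d+1}^{k+1}-x_{d+1}^k)\Vert_2^2\Big),
\end{equation*}
with the Lyapunov function
\begin{equation*}
V^k=\sum_{d=1}^{D-1}\Big(\tfrac1\rho\Vert y_d^k-y_d^\star\Vert_2^2+\rho\Vert B_d(x_{d+1}^k-x_{d+1}^\star)\Vert_2^2\Big),
\end{equation*}
possibly with level-dependent weights.

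\textbf{Main obstacle: the mixed terms.} The hard step is controlling the mixed terms $\rho\langle B_d(x_{d+1}^{k+1}-x_{d+1}^k), A_d(x_d^{k+1}-x_d^\star)\rangle$ for $d\ge 2$. Unlike two-block ADMM, these are not automatically sign-definite. The first attempt will be the classical two-block trick applied pairwise along the chain. Write $A_d(x_d^{k+1}-x_d^\star)=r_d^{k+1}-B_d(x_{d+1}^{k+1}-x_{d+1}^\star)$ and apply the polarization identity to the $B_d$ part. This produces the telescoping differences $\Vert B_d(x_{d+1}^{k+1}-x_{d+1}^\star)\Vert^2-\Vert B_d(x_{d+1}^k-x_{d+1}^\star)\Vert^2$ together with a cross term $\langle r_d^{k+1},B_d(x_{d+1}^{k+1}-x_{d+1}^k)\rangle$. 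To sign that cross term, compare the optimality conditions of $x_{d+1}$ at iterations $k$ and $k+1$ and use monotonicity of $\partial f_{d+1}$, mirroring the two-block proof in \cite{boyd2011distributed}. The terms it generates that involve $A_{d+1}$ are then passed down to the next level, which is where the chain structure and the ordering of the forward pass are essential. If the pure chain argument fails, I would fall back to a weighted Lyapunov function or a Young-inequality split, absorbing the residual pieces into the decrease term.

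\textbf{Conclusion from the decrease.} Once the decrease inequality holds, $V^k$ is bounded and $\sum_k\Vert r_d^{k+1}\Vert^2<\infty$, so $r_d^k\to 0$ and $B_d(x_{d+1}^{k+1}-x_{d+1}^k)\to0$. Boundedness of $y_d^k$ and of $B_dx_{d+1}^k$ follows from the bound on $V^k$. Full column rank of $B_1,\dots,B_{D-1}$ then gives boundedness of $x_2,\dots,x_D$. For $x_1$, boundedness of $r_1^k$ gives boundedness of $A_1x_1^k$, and full column rank of $A_1$ gives boundedness of $x_1^k$. Also $x_{d+1}^{k+1}-x_{d+1}^k\to0$.

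\textbf{Cluster points and the singleton case.} Take any cluster point and pass to the limit in the perturbed optimality inclusions. The perturbation terms vanish, and the graph of $\partial f_d$ is closed since each $f_d$ is finite convex. Together with $r_d\to0$, this shows the limit satisfies \eqref{eq:prf-KKTs}, i.e.\ it lies in $\calO^\star$. If $\calO^\star$ is a singleton, then the bounded sequence has all of its cluster points equal to that single point, so the whole sequence converges to it.
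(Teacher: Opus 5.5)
Your architecture matches the paper's: monotonicity of $\partial f_d$ against a KKT point, the Lyapunov function $V^k$, per-level polarization, boundedness from full column rank, and the cluster-point argument. However, the central decrease step is never established, and the route you sketch for it has no evident way to close.

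Write $\delta_d^{k+1}=\rho B_d(x_{d+1}^{k+1}-x_{d+1}^k)$. Your target asks for decrease in $\Vert r_d^{k+1}\Vert_2^2$ and $\Vert\delta_d^{k+1}\Vert_2^2$ separately at every level. You plan to get it by signing $\langle r_d^{k+1},\delta_d^{k+1}\rangle$ through cross-iteration monotonicity, as in two-block ADMM. That sign is available only at $d=D-1$, because the leaf condition $0\in\partial f_D(x_D^{k+1})+B_{D-1}^\top y_{D-1}^{k+1}$ carries no correction term. For $d\le D-2$, comparing the optimality conditions of $x_{d+1}$ at iterations $k$ and $k+1$ gives only
\begin{equation*}
\langle r_d^{k+1},\delta_d^{k+1}\rangle\le-\big\langle \rho r_{d+1}^{k+1}-\delta_{d+1}^{k+1}+\delta_{d+1}^{k},\,A_{d+1}(x_{d+1}^{k+1}-x_{d+1}^k)\big\rangle .
\end{equation*}
Its right-hand side is indefinite and contains the previous-iteration term $\delta_{d+1}^k$. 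When the $f_d$ are affine, the slack discarded in the monotonicity step is zero, so nothing absorbs this term. Level-dependent weights do not obviously help either: the summed monotonicity inequality collapses into $\sum_d\langle y_d^{k+1}-y_d^\star,r_d^{k+1}\rangle$ only when adjacent levels carry equal weights. Consequently, your deduction that $r_d^k\to0$ and $B_d(x_{d+1}^{k+1}-x_{d+1}^k)\to0$ by summability is unsupported for $d\le D-2$.

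The missing observation is that Fej\'er monotonicity needs no sign at all. Let $\tilde y_d^k=y_d^k-y_d^\star$ and let $V_d^k$ be the $d$-th summand of your $V^k$. Completing the square level by level gives
\begin{equation*}
2\langle\delta_d^{k+1},A_d(x_d^{k+1}-x_d^\star)\rangle-2\langle\tilde y_d^{k+1},r_d^{k+1}\rangle=V_d^k-V_d^{k+1}-\tfrac1\rho\Vert\rho r_d^{k+1}-\delta_d^{k+1}\Vert_2^2 .
\end{equation*}
The summed monotonicity inequality therefore yields $V^k-V^{k+1}\ge\frac1\rho\sum_d\Vert\rho r_d^{k+1}-\delta_d^{k+1}\Vert_2^2$ unconditionally, which already gives boundedness.

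The individual limits then come from a backward recursion that uses the cross-iteration inequality of the previous paragraph only asymptotically.
\begin{itemize}
\item At $d=D-1$, the exact sign gives $r_{D-1}^k\to0$ and $\delta_{D-1}^k\to0$.
\item For $d=D-2,\dots,1$, the right-hand side of that inequality tends to zero. This holds because $\rho r_{d+1}^{k+1}-\delta_{d+1}^{k+1}\to0$ and $\delta_{d+1}^k\to0$ from the previous step. Meanwhile $A_{d+1}(x_{d+1}^{k+1}-x_{d+1}^k)$ stays bounded, by bounded $V^k$ and full column rank of $B_d$.
\item Hence $\limsup_k\langle r_d^{k+1},\delta_d^{k+1}\rangle\le0$. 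Together with $\rho r_d^{k+1}-\delta_d^{k+1}\to0$, this forces $r_d^{k+1}\to0$ and $\delta_d^{k+1}\to0$.
\end{itemize}
This is essentially the paper's route, and your boundedness, cluster-point and singleton arguments then go through unchanged.
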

\begin{proof}
    See Appendix~\ref{apdx:thm1_prf}.
\end{proof}

\changed{Similarly, Theorem~\ref{thm:convergence_strcon} establishes convergence properties of Algorithm~\ref{alg:hier} under Scenario~\ref{scn:str_convex}. 
}

\begin{theorem} \label{thm:convergence_strcon}
  \changed{Under Scenario \ref{scn:str_convex}, the primal sequence $\{[x_d^k]_{d\in \bbN_D}\}_k$ obtained by Algorithm \ref{alg:hier} converges to the unique minimizer $[x_d^\star]_{d \in \bbN_D}$.
  }
\end{theorem}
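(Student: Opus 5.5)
The plan is a Lyapunov-type argument that measures distance to a KKT point of \eqref{eq:prf-KKTs}, in the spirit of Gauss--Seidel multi-block ADMM analyses under strong convexity (e.g., \cite{cao2019dynamic, deng2017parallel}). Slater's condition and existence of an optimum give a KKT pair $([x_d^\star],[y_d^\star])$. Strong convexity of each $f_d$ makes $J$ strongly convex, so $[x_d^\star]$ is the unique minimizer even though the $y^\star$ may not be unique. I will fix one such $y^\star$.

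\textbf{Step 1: optimality conditions of the forward updates.} The optimality condition of the $x_d$-update in \eqref{eq:forward} can be rewritten with the new multipliers from \eqref{eq:backward}. For $2\le d\le D-1$, since $y_{d-1}^{k+1}=y_{d-1}^k+\rho r_{d-1}^{k+1}$, this gives
\begin{equation*}
0=\nabla f_d(x_d^{k+1})+B_{d-1}^\top y_{d-1}^{k+1}+A_d^\top y_d^{k+1}-\rho A_d^\top\big(A_dx_d^{k+1}+B_dx_{d+1}^{k+1}-c_d\big)+\rho A_d^\top\big(A_dx_d^{k+1}+B_dx_{d+1}^{k}-c_d\big),
\end{equation*}
that is,
\begin{equation*}
0=\nabla f_d(x_d^{k+1})+B_{d-1}^\top y_{d-1}^{k+1}+A_d^\top y_d^{k+1}+\rho A_d^\top B_d(x_{d+1}^k-x_{d+1}^{k+1}).
\end{equation*}
The root and leaf levels give analogous identities. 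Subtracting the KKT equations and using strong monotonicity, $\langle\nabla f_d(x_d^{k+1})-\nabla f_d(x_d^\star),x_d^{k+1}-x_d^\star\rangle\ge2\alpha_d\|x_d^{k+1}-x_d^\star\|^2$, then summing over $d$, should yield
\begin{equation*}
\sum_d 2\alpha_d\|x_d^{k+1}-x_d^\star\|^2\le-\sum_d\langle y_d^{k+1}-y_d^\star,r_d^{k+1}\rangle-\rho\sum_d\langle B_d(x_{d+1}^k-x_{d+1}^{k+1}),A_d(x_d^{k+1}-x_d^\star)\rangle.
\end{equation*}
The cross terms in $y$ telescope into the residual $r_d^{k+1}=A_dx_d^{k+1}+B_dx_{d+1}^{k+1}-c_d$ because $A_dx_d^\star+B_dx_{d+1}^\star=c_d$.

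\textbf{Step 2: reduction to a descent inequality.} I will write $A_d(x_d^{k+1}-x_d^\star)=r_d^{k+1}-B_d(x_{d+1}^{k+1}-x_{d+1}^\star)$ and $\langle y^{k+1}-y^\star,r^{k+1}\rangle=\tfrac1\rho\langle y^{k+1}-y^\star,y^{k+1}-y^k\rangle$. Applying the polarization identity then turns the right-hand side into a difference of the potential
\begin{equation*}
V^k=\sum_d\Big(\tfrac1{2\rho}\|y_d^k-y_d^\star\|^2+\tfrac\rho2\|B_d(x_{d+1}^k-x_{d+1}^\star)\|^2\Big),
\end{equation*}
minus the nonnegative terms $\tfrac\rho2\|r_d^{k+1}\|^2$ and $\tfrac\rho2\|B_d(x_{d+1}^{k+1}-x_{d+1}^k)\|^2$, plus leftover cross terms $\rho\langle r_d^{k+1},B_d(x_{d+1}^k-x_{d+1}^{k+1})\rangle$. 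This is the standard two-block manipulation applied edge by edge.

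\textbf{Step 3: the main obstacle.} The difficulty is the sign of the leftover cross term $\langle r_d^{k+1},B_d(x_{d+1}^{k}-x_{d+1}^{k+1})\rangle$. In two-block ADMM it is handled by monotonicity of $\partial f_{d+1}$ between consecutive iterates. Here $x_{d+1}$ also couples downward through $A_{d+1}$, so that argument produces an additional term with $A_{d+1}^\top B_{d+1}(x_{d+2}^k-x_{d+2}^{k+1})$. The plan is to absorb these terms rather than cancel them exactly. I would use Young's inequality together with the strong-convexity slack $2\alpha_{d+1}\|x_{d+1}^{k+1}-x_{d+1}^k\|^2$, which comes from strong monotonicity between iterations $k$ and $k+1$, and the dissipated terms $\tfrac\rho2\|r^{k+1}\|^2$ and $\tfrac\rho2\|B_d\Delta x_{d+1}\|^2$. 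If the constants do not close for arbitrary $\rho$, I would augment $V^k$ with $\sum_d\tfrac\rho2\|r_d^k\|^2$ or a weighted $\|x^{k}-x^{k-1}\|^2$ term, as in the Jacobi/proximal multi-block analyses of \cite{deng2017parallel}. The goal is an inequality of the form
\begin{equation*}
V^{k+1}+c\sum_d\|x_d^{k+1}-x_d^\star\|^2\le V^k,\qquad c>0.
\end{equation*}

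\textbf{Step 4: conclusion.} Once that inequality holds, $V^k$ is nonincreasing and bounded below. Summing over $k$ gives $\sum_k\|x_d^{k+1}-x_d^\star\|^2<\infty$, hence $x_d^k\to x_d^\star$ for every $d$, which is the claim. Note that this route does not require full-rank conditions or uniqueness of $y^\star$, consistent with the statement of Theorem~\ref{thm:convergence_strcon}.
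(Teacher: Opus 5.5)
Your Steps 1, 2 and 4 are essentially correct and follow the paper's argument closely. The paper reaches the same starting inequality through the function-value bounds $J^{k+1}-J^\star\le g^{k+1}$ and strong convexity at $x^\star$. You instead use strong monotonicity of $\nabla f_d$ directly, which is equivalent and even gives twice the paper's strong-convexity term for the same potential.

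The gap is Step 3: you never prove the descent inequality $V^{k+1}+c\sum_d\|x_d^{k+1}-x_d^\star\|_2^2\le V^k$. You treat the cross term as an obstacle and sketch a detour: monotonicity of $\nabla f_{d+1}$ between iterates $k$ and $k+1$, Young's inequality, and possibly an augmented potential. You concede its constants may not close. The detour is genuinely awkward on a chain. Comparing the optimality conditions of $x_{d+1}$ at iterations $k$ and $k+1$ produces $A_{d+1}^\top(\delta_{d+1}^{k+1}-\delta_{d+1}^{k})$, with $\delta_{d+1}^{k}=\rho B_{d+1}(x_{d+2}^{k}-x_{d+2}^{k-1})$. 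This term involves iterate $k-1$ and is not controlled by the dissipation available at step $k+1$. The paper uses that trick only on the last edge $d=D-1$, in the proof of Theorem~\ref{thm:converge}, and for a different purpose.

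The missing observation is that nothing needs to be absorbed. Let $b_d=B_d(x_{d+1}^{k+1}-x_{d+1}^k)$. The three leftover terms on each edge then form a perfect square:
\[
-\tfrac{\rho}{2}\|r_d^{k+1}\|_2^2-\tfrac{\rho}{2}\|b_d\|_2^2+\rho\langle r_d^{k+1},b_d\rangle=-\tfrac{\rho}{2}\|r_d^{k+1}-b_d\|_2^2\le 0 .
\]
Your Step 2 has the sign of the cross term flipped; the correct term is $+\rho\langle r_d^{k+1},b_d\rangle$. With either sign it is dominated by the two squares, since $2|\langle a,b\rangle|\le\|a\|_2^2+\|b\|_2^2$.

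This is exactly the paper's identity \eqref{eq:conv-proof:prf_expr}, which in the paper's notation reads $2(\delta_d^{k+1})^\top A_d(x_d^{k+1}-x_d^\star)-2(\tilde y_d^{k+1})^\top r_d^{k+1}=V_d^k-V_d^{k+1}-\frac{1}{\rho}\|\rho r_d^{k+1}-\delta_d^{k+1}\|_2^2$. Here $\delta_d^{k+1}=\rho b_d$ and the paper's $V$ is twice yours. Inserting it into your Step 1 inequality gives, in your normalization and for every $\rho>0$,
\[
V^{k+1}+\frac{\rho}{2}\sum_{d=1}^{D-1}\|r_d^{k+1}-b_d\|_2^2+2\sum_{d=1}^{D}\alpha_d\|x_d^{k+1}-x_d^\star\|_2^2\le V^k .
\]
No Young's inequality, inter-iterate monotonicity, or modified Lyapunov function is needed. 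Your Step 4 then concludes exactly as the paper does.
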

\begin{proof}
\ifdefined\SubmittedPaper
    The proof of Theorem \ref{thm:convergence_strcon} can be derived by the proof of Theorem \ref{thm:converge} and properties of strongly convex functions. See the full proof in the full-length paper \cite[Appendix~C]{hADMMfullpaper}.
\else
    See Appendix~\ref{apdx:thm2_prf}.
\fi
\end{proof}

\subsection{Convergence rate of hADMM}

\changed{To analyze the convergence rate of hADMM, we consider the following scenario, which is similar to but stricter than Scenario \ref{scn:str_convex} by assuming Lipschitz continuity of most gradients.}
\changed{
\begin{scenario} \label{ass:Lipschitz}
    $\Vert B_d\Vert_2 \neq0$ for all $d\in\bbN_{D-1}$. Functions $f_1, f_2,\dots f_D$ are differentiable strongly convex with moduli $\alpha_d > 0$, and $\nabla f_2,\dots \nabla f_D$ are Lipschitz continuous with constant $l_d$, that is, for all $z,z^\prime \in {\mathrm Dom}(f_d)$,
    \begin{align}
        \Vert \nabla f_d(z) - \nabla f_d(z^\prime) \Vert_2 \leq l_d \Vert z - z^\prime \Vert_2.
    \end{align}
\end{scenario}
}

\changed{Theorem~\ref{thm:convergence_rate} below shows that under Scenario~\ref{ass:Lipschitz}, hADMM has a linear convergence rate.}

\begin{theorem}[\changed{Linear convergence rate}] \label{thm:convergence_rate} 
  Assume Scenario \ref{ass:Lipschitz}. Choose $t\in (0,1)$ and $\changed{0< \sigma < \bar \sigma = \min_{d\in \bbN_{D-1}}} \left\{\frac{2t\alpha_{d+1}}{\Vert B_d \Vert_2^2}\right\}$.
  \changed{Assume the following semidefinite program (SDP) is feasible and let $\gamma^\star>0$ be its optimal value:}
  \begin{subequations}
    \label{thm:sdp}
    \begin{align}
      \gamma^\star = \max_{\gamma > 0} \;&\gamma \\
      \mathrm{s.t.}\; &\Xi - \gamma \Theta \succeq 0\text, \label{thm:rate_con}
    \end{align}
  \end{subequations}
\changed{
where $\Xi$ is a symmetric matrix defined in Appendix~\ref{apdx:thm3_prf} and \(\Theta = {\diag}\left(\frac{1}{\rho} I_s, (\rho + \sigma) I_s, 0_s, 0_s \right)\).
Define the Lyapunov-type quantity
\(\hat{V}^k = \sum_{d=1}^{D-1} \frac{1}{\rho} \Vert y_d^k - y_d^\star \Vert_2^2  + (\rho + \sigma) \Vert B_d( x_{d+1}^k - x_{d+1}^\star) \Vert_2^2\).
Then, $\hat{V}^k$ satisfies}
\begin{align}
    (1+\gamma^\star)\hat V^{k+1} \leq \hat V^k \text.
    \label{eq:lya_ctrc}
\end{align}
\changed{Moreover, the sequence $\{[x_d^k]_{d\in \bbN_D}\} $ converges linearly to the unique minimizer $[x_d^\star]_{d\in \bbN_D}$, that is, there exists a positive constant $\calC$ such that:
\begin{align} \label{eq:lin-con}
  \sum_{d=1}^{D} \Vert x_d^{k+1} - x_d^\star \Vert_2 \leq \calC \Big( \frac{1}{\sqrt{1+\gamma^\star}}\Big)^k \text.
\end{align}
}
\end{theorem}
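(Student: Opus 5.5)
The plan is the standard Lyapunov/LMI route for linear convergence of ADMM (as in \cite{deng2016global, cao2019dynamic}). It has three steps: write first-order optimality conditions for each forward-pass block update, turn them into one quadratic inequality in the error variables, and read off the contraction of $\hat V^k$ from the SDP constraint \eqref{thm:rate_con}.

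\textbf{Step 1: optimality conditions in error form.} Under Scenario~\ref{ass:Lipschitz} each $f_d$ is differentiable, so every subproblem in \eqref{eq:forward} has a unique minimizer characterized by its gradient condition. I will substitute the dual update \eqref{eq:backward}, i.e. $y_d^{k+1}=y_d^k+\rho r_d^{k+1}$ with $r_d^{k+1}=A_dx_d^{k+1}+B_dx_{d+1}^{k+1}-c_d$. This gives, for $2\le d\le D-1$,
\[
\nabla f_d(x_d^{k+1}) + A_d^\top\big(y_d^{k+1}-\rho B_d(x_{d+1}^{k+1}-x_{d+1}^k)\big) + B_{d-1}^\top y_{d-1}^{k+1}=0 .
\]
The root is analogous with only the $A_1^\top$ term, and the leaves with only the $B_{D-1}^\top$ term. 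I then subtract the KKT system \eqref{eq:prf-KKTs}. The resulting identities involve three kinds of error: the dual errors $\tilde y_d^{k+1}=y_d^{k+1}-y_d^\star$, the gradient differences $\nabla f_d(x_d^{k+1})-\nabla f_d(x_d^\star)$, and the increments $B_d(x_{d+1}^{k+1}-x_{d+1}^k)$. This mismatch term is exactly what distinguishes the Gauss--Seidel forward pass from an exact dual step.

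\textbf{Step 2: a quadratic inequality in the stacked error.} For each $d$ I will take the inner product of the error identity with $x_d^{k+1}-x_d^\star$. Strong convexity bounds the gradient term below by a multiple of $\alpha_d\Vert x_d^{k+1}-x_d^\star\Vert^2$. For $d\ge 2$, Lipschitz continuity of $\nabla f_d$ gives the co-coercivity-type bound $\Vert\nabla f_d(x_d^{k+1})-\nabla f_d(x_d^\star)\Vert^2\le l_d\langle\cdot,\cdot\rangle$. I will also use $r_d^{k+1}=\frac1\rho(y_d^{k+1}-y_d^k)$ and the polarization identity $2\langle a-b,a\rangle=\Vert a\Vert^2-\Vert b\Vert^2+\Vert a-b\Vert^2$. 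The free parameter $\sigma$ enters by adding $\sigma\Vert B_d(x_{d+1}^{k+1}-x_{d+1}^\star)\Vert^2$ to both sides. The condition $\sigma<2t\alpha_{d+1}/\Vert B_d\Vert^2$ guarantees that a fraction $t$ of the strong convexity of $f_{d+1}$ absorbs this added term, since $\Vert B_d z\Vert^2\le\Vert B_d\Vert^2\Vert z\Vert^2$. Collecting everything, I expect to obtain
\[
\hat V^k-\hat V^{k+1}\ \ge\ w^\top \Xi\, w,
\]
where $w$ stacks the $s$-dimensional error blocks: $\tilde y^{k+1}$, the $B(x^{k+1}-x^\star)$ terms, the $B(x^{k+1}-x^k)$ increments, and the gradient differences. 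The $\Xi$ here is the matrix defined in Appendix~\ref{apdx:thm3_prf}. With this ordering of blocks, $w^\top\Theta w=\hat V^{k+1}$.

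\textbf{Step 3: contraction and linear rate.} If \eqref{thm:rate_con} holds with $\gamma^\star$, then $w^\top\Xi w\ge\gamma^\star w^\top\Theta w=\gamma^\star\hat V^{k+1}$, which is exactly \eqref{eq:lya_ctrc}. Iterating gives $\hat V^k\le(1+\gamma^\star)^{-k}\hat V^0$. To pass to \eqref{eq:lin-con}: $\hat V^k$ controls $\Vert B_d(x_{d+1}^k-x_{d+1}^\star)\Vert$ directly, and with $\Vert B_d\Vert_2\neq0$ and strong convexity it also controls the primal errors $x_d^k$ for $d\ge2$. For the root $x_1$, I will use the strong-convexity inequality from Step~2 applied to block $1$, which bounds $\alpha_1\Vert x_1^{k+1}-x_1^\star\Vert^2$ by a combination of terms in $\hat V^k$ and $\hat V^{k+1}$. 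Taking square roots and summing over $d$ then gives the constant $\calC$ and the rate $(1+\gamma^\star)^{-1/2}$.

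\textbf{Main obstacle.} The hardest step is Step~2. The forward pass is sequential, so the error identity at level $d$ couples $x_d^{k+1}$ with the stale $x_{d+1}^k$. The cross terms must be arranged so that the nonnegative quantities telescope into $\hat V^k-\hat V^{k+1}$ and everything else lands in a single quadratic form, rather than requiring a Young inequality that would introduce additional parameters. I would first work out the case $D=2$, which is plain ADMM, to fix the block structure of $\Xi$, and then propagate the tridiagonal coupling level by level.
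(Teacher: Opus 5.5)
Your route is the paper's: error-form optimality conditions, a $t$/$(1-t)$ split between strong convexity and co-coercivity (with only monotonicity at the root), the telescoping identity \eqref{eq:conv-proof:prf_expr}, a $\sigma$-correction from $V^k$ to $\hat V^k$, and strong convexity plus Cauchy--Schwarz for the primal errors. The gap is in Step~2. The vector $w$ you name is not one for which the appendix's $\Xi$ is valid, so Step~3 would invoke \eqref{thm:rate_con} for a quadratic form you never derived.

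The gradient differences cannot be a block of $w$, because they live in $\mathbb{R}^{n_1+\cdots+n_D}$, not $\mathbb{R}^s$. The paper eliminates them with \eqref{eq:prf_rate2-1}--\eqref{eq:prf_rate4}, so the co-coercivity term becomes $\frac{1}{l_d}\Vert A_d^\top\tilde y_d^{k+1}+B_{d-1}^\top\tilde y_{d-1}^{k+1}-A_d^\top\delta_d^{k+1}\Vert_2^2$. This is what generates $\Xi^{(1,1)}$, $\Xi^{(4,1)}$ and the $A_dA_d^\top$ part of $\Xi^{(4,4)}$.

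Conversely, your $w$ omits the primal residual $r^{k+1}$, which is indispensable. It enters through $\rho\Vert r_d^{k+1}-\rho^{-1}\delta_d^{k+1}\Vert_2^2$ in \eqref{eq:conv-proof:prf_expr} and produces $\Xi^{(3,3)}=\rho I_s$, $\Xi^{(4,3)}=-I_s$ and the $\frac{1}{\rho}I_s$ in $\Xi^{(4,4)}$. The correct vector is $\bm{\eta}^{k+1}=(\tilde y^{k+1},\tilde x^{k+1},r^{k+1},\delta^{k+1})$, with the increments $\delta_d^{k+1}=\rho B_d(x_{d+1}^{k+1}-x_{d+1}^k)$ in the fourth slot, not the third.

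Your $\sigma$ step also needs correcting. What must be added to both sides is $\sigma\sum_d(\Vert\tilde x_{d+1}^k\Vert_2^2-\Vert\tilde x_{d+1}^{k+1}\Vert_2^2)=(\hat V^k-\hat V^{k+1})-(V^k-V^{k+1})$, not $\sigma\Vert\tilde x_{d+1}^{k+1}\Vert_2^2$. You must then rewrite $\tilde x_{d+1}^k=\tilde x_{d+1}^{k+1}-\rho^{-1}\delta_d^{k+1}$ so the right-hand side depends only on $\bm{\eta}^{k+1}$; this gives $\Xi^{(4,2)}=-\frac{\sigma}{\rho}I_s$. The bound $\sigma<\bar\sigma$ plays no role in \eqref{eq:prf_rate12}; it is only used for $\Xi\succeq 0$ in Proposition~\ref{prop:fullrank}.

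The obstacle you anticipate disappears without any Young inequality:
\begin{itemize}
\item at each level $2\le d\le D-1$, both weighted inequalities contain $(\tilde x_d^{k+1})^\top\tilde y_{d-1}^{k+1}-(\tilde x_{d+1}^{k+1})^\top\tilde y_d^{k+1}$, with total weight $t+(1-t)=1$;
\item the leaf contributes $+(\tilde x_D^{k+1})^\top\tilde y_{D-1}^{k+1}$;
\item the root contributes $-(\tilde x_2^{k+1})^\top\tilde y_1^{k+1}$.
\end{itemize}
These cross terms cancel exactly on summation, and the stale-iterate coupling is carried entirely by $\delta^{k+1}$.

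Lastly, for the primal bound at levels $d\ge 2$, $\Vert B_{d-1}\Vert_2\neq 0$ does not let you recover $x_d$ from $B_{d-1}x_d$. Use the argument you propose for the root at every level: strong convexity, the gradient identity, and Cauchy--Schwarz. This is what the paper does.
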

\begin{proof}
    % \ifdefined\SubmittedPaper
    See Appendix~\ref{apdx:thm3_prf}.
\end{proof}

The feasibility of \eqref{thm:sdp} is crucial in Theorem \ref{thm:convergence_rate}, \changed{for which a sufficient condition is provided by the following proposition.}

\begin{proposition} \label{prop:fullrank}
    \changed{
    Under Scenario~\ref{ass:Lipschitz} and the parameter choices of
    Theorem~\ref{thm:convergence_rate}, suppose the block lower-bidiagonal matrix
    \begin{align} \label{eq:Lambda}
      \Lambda =
      \begin{bmatrix}
        B_1 \\
        A_2 & B_2 \\
            & \ddots & \ddots \\
            &        & A_{D-1} & B_{D-1}
      \end{bmatrix}
    \end{align}
    has \changed{full row rank}. Then, the SDP~\eqref{thm:sdp} is feasible.
    }
\end{proposition}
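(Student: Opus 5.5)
We need to show there is some $\gamma>0$ with $\Xi-\gamma\Theta\succeq 0$. The matrix $\Xi$ is only defined in Appendix~\ref{apdx:thm3_prf}, so the plan relies on the structure it has in standard linear-rate proofs of this kind (cf.\ \cite{deng2016global}). There, $\Xi$ is a symmetric quadratic form in the stacked error vector
\[
w^k=\big(y^{k+1}-y^\star,\; B(x^{k+1}-x^\star),\; B(x^{k+1}-x^{k}),\; \nabla f(x^{k+1})-\nabla f(x^\star)\big)
\]
(or an equivalent arrangement with four blocks of size $s$). It is obtained by adding the strong-convexity inequalities for each $f_d$, with the $\sigma$-slack coming from $\sigma<\bar\sigma$ and $t<1$, to the Lipschitz co-coercivity inequalities for $\nabla f_2,\dots,\nabla f_D$.

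\textbf{Step 1.} Split $\Xi$ along the blocks on which $\Theta$ is nonzero. The choice $\sigma<\bar\sigma$ leaves a strictly positive margin $(1-t)\alpha_{d+1}$ (or $2t\alpha_{d+1}-\sigma\Vert B_d\Vert_2^2>0$) in front of $\Vert x_{d+1}^{k+1}-x_{d+1}^\star\Vert^2$. This margin dominates $\Vert B_d(x_{d+1}^{k+1}-x_{d+1}^\star)\Vert^2$, since $\Vert B_d\Vert_2\neq 0$ and the norm is finite. Hence the $(\rho+\sigma)I_s$ block of $\Theta$ can be absorbed for small $\gamma$, at a cost of order $\gamma$ times a positive quantity.

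\textbf{Step 2.} The delicate part is the $\frac1\rho I_s$ block for $y^{k+1}-y^\star$. In $\Xi$ the dual error appears only through the optimality conditions of the forward pass \eqref{eq:forward} combined with the KKT system \eqref{eq:prf-KKTs}. Comparing these gives a relation of the form
\[
\Lambda^\top(y^{k+1}-y^\star)=-\big(\nabla f_{2:D}(x^{k+1})-\nabla f_{2:D}(x^\star)\big)+\rho\,(\text{terms in } B_d(x^{k+1}-x^k)).
\]
Here $\Lambda^\top$ is block upper-bidiagonal, assembled from $B_{d-1}^\top y_{d-1}+A_d^\top y_d$ in rows $d=2,\dots,D$, which is precisely the transpose of \eqref{eq:Lambda}. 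Full row rank of $\Lambda$ means $\Lambda^\top$ is injective, so $\lambda_{\min}(\Lambda\Lambda^\top)>0$ and
\[
\Vert y^{k+1}-y^\star\Vert^2\le \frac{2}{\lambda_{\min}(\Lambda\Lambda^\top)}\Big(\sum_{d\ge2} l_d^2\Vert x_d^{k+1}-x_d^\star\Vert^2+\rho^2\Vert B(x^{k+1}-x^k)\Vert^2\Big).
\]
Both terms on the right are controlled by quadratic terms already present with positive coefficients in $\Xi$: the gradient or strong-convexity margin, and the $\Vert B(x^{k+1}-x^k)\Vert^2$ term arising from the ADMM three-point identity. This is the only place the Lipschitz constants of $\nabla f_2,\dots,\nabla f_D$ and the rank condition enter.

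\textbf{Step 3.} Combine the two steps via a Schur-complement or S-procedure argument. Write $\Xi$ as the sum of a matrix $\Xi_0\succeq0$ and a matrix that is positive definite on $\mathrm{range}(\Theta)$ modulo the linear relation above. Then $\Xi\succeq \epsilon\,\Theta$ for
\[
\epsilon=\min\Big\{\tfrac{\lambda_{\min}(\Lambda\Lambda^\top)\,c_1}{\rho\,(\ldots)},\; \tfrac{c_2}{(\rho+\sigma)\max_d\Vert B_d\Vert_2^2}\Big\}>0,
\]
with $c_1,c_2$ the margins from Steps 1 and 2. Any $\gamma\in(0,\epsilon]$ is then feasible.

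The main obstacle is Step~2. $\Xi$ has zero blocks in the $y$-directions because $\Theta$'s $y$-block is not matched by any diagonal term in $\Xi$, so positivity must be transferred through the equality linking $y$ to the gradients. I would handle this first by forming the matrix form of that identity as an extra quadratic constraint, added with a free multiplier, S-lemma style, and checking that the resulting block matrix has a positive Schur complement exactly when $\Lambda^\top$ is injective.
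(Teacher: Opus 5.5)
Your plan has a genuine gap in Steps~2--3. Feasibility of \eqref{thm:sdp} is an \emph{unconditional} matrix statement: $\eta^\top(\Xi-\gamma\Theta)\eta\ge 0$ must hold for every $\eta\in\mathbb{R}^{4s}$, not only for vectors produced by the iteration. The identity $\Lambda^\top(y^{k+1}-y^\star)=\dots$ holds only along iterates, so you cannot use it as a side constraint. An S-lemma or Finsler multiplier on it certifies positivity only on the subspace where the identity holds, equivalently a modified LMI with an added multiplier term. That is not the LMI in the proposition.

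In fact, under your own structural guess the proposition would be false. If $\Xi$ had a zero diagonal block in the $y$-directions, then $\Xi-\gamma\Theta$ would contain the diagonal block $-\frac{\gamma}{\rho}I_s$, and it could not be positive semidefinite for any $\gamma>0$. Your Step~2 bound also leaves the variables of $\Xi$: it uses $l_d^2\Vert x_d^{k+1}-x_d^\star\Vert_2^2$, but only $B_d(x_{d+1}-x_{d+1}^\star)$ appears in $\Xi$, and Scenario~\ref{ass:Lipschitz} does not make $B_d$ injective.

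The missing observation is that the paper substitutes the optimality relations \eqref{eq:prf_rate2-1}--\eqref{eq:prf_rate4} into the co-coercivity inequalities \emph{before} forming $\Xi$. So gradient differences are never separate variables. The blocks of $\eta=(y,z,r,\delta)$ play the roles of $(\tilde y,\tilde x,r,\delta)$, and $\Xi^{(1,1)}=\Lambda W\Lambda^\top$ with $W=\mathrm{diag}(w_2I_{n_2},\dots,w_DI_{n_D})$ and $w_d=2(1-t)/l_d$. Expanding the blocks gives, for arbitrary $\eta$ and $a_d=2t\alpha_{d+1}/\Vert B_d\Vert_2^2$,
\begin{align*}
\eta^\top\Xi\eta={}&w_D\Vert B_{D-1}^\top y_{D-1}\Vert_2^2+\sum_{d=2}^{D-1}w_d\Vert A_d^\top(y_d-\delta_d)+B_{d-1}^\top y_{d-1}\Vert_2^2\\
&+\sum_{d=1}^{D-1}\Big[(a_d-\sigma)\Vert z_d\Vert_2^2+\sigma\Vert z_d-\rho^{-1}\delta_d\Vert_2^2\\
&\qquad\qquad+\rho\Vert r_d-\rho^{-1}\delta_d\Vert_2^2\Big].
\end{align*}
All coefficients are positive since $t\in(0,1)$ and $0<\sigma<\bar\sigma$. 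If the form vanishes, then successively $z_d=0$, $\delta_d=0$, $r_d=0$, and finally $\Lambda^\top y=0$, so $y=0$ by full row rank of $\Lambda$. Hence $\Xi\succ0$, and $\gamma=\lambda_{\min}(\Xi)/(2\Vert\Theta\Vert_2)$ is feasible.

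Two of your instincts are right: the margin $a_d-\sigma$ handles the $(\rho+\sigma)$-block, and injectivity of $\Lambda^\top$ is the crux. But the rank condition enters through the block $\Xi^{(1,1)}$ itself, not through a constraint on iterates.
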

\begin{proof}
    \ifdefined\SubmittedPaper
    See \cite[Appendix~E]{hADMMfullpaper}.
    \else
    See Appendix~\ref{prop:proof_rate}.
    \fi
\end{proof}

\begin{remark} \label{rm:bilevel}
\changed{The SDP~\eqref{thm:sdp} in Theorem \ref{thm:convergence_rate} maximizes the lower bound of the linear convergence rate parameter $\gamma^\star$ with respect to the network structure, constraints, and penalty parameter $\rho$.
It provides a criterion for designing a tree network that maximizes the contraction rate of $\hat V^k$.}
Furthermore, the choice of $(t,\sigma)$ will affect $\gamma^\star(t, \sigma)$.
To find the largest $\gamma^\star$, we can consider the bilevel optimization $\max{\gamma^\star(t, \sigma)}\;\mathrm{s.t.}\; t\in (0,1), \sigma \in (0, \bar \sigma)$.
\end{remark}

\begin{remark}
If $D=2$, the network $\calG$ has a star topology and the condition in Proposition~\ref{prop:fullrank} reduces to the requirement that $B_1$ has full row rank, % is a full-column-rank matrix,
which aligns with the conditions in \cite[Assumption 4]{cao2019dynamic} and \cite[Scenario 2]{deng2016global}.
\end{remark}

\section{Numerical Examples}
\label{sec:examples}

This section presents three numerical examples to validate the proposed hADMM method in 
Algorithm \ref{alg:hier} and convergence results in Section \ref{sec:convergence}. 
To compare our approach with existing methods, we consider two baselines:
\begin{itemize}
    \item Baseline 1 is the nADMM described in  Section \ref{sec:nADMM} and   \ifdefined\SubmittedPaper
        \cite[Alogrithm~2]{hADMMfullpaper},
        \else
        Algorithm \ref{alg:nADMM},
        \fi which has been employed in \cite{khakiHierarchicalADMMBased2018, zhangCooperativeLoadScheduling2020a, braunHierarchicalDistributedADMM2018}.
    \item Baseline 2 is the fADMM outlined in Section \ref{sec:flattenADMM}, which flattens the tree network for direct communication (via relays) between the root node and all the other nodes to apply the standard ADMM.
\end{itemize}
This paper considers the following three metrics to evaluate and compare the performance of the 
methods:
\begin{itemize}
\item \textit{Maximum number of iterations in a node ($\mathtt{MaxNodeIters}$):}
  This metric evaluates the convergence speed and the computational load of each method.  
  In both hADMM and fADMM, all nodes perform the same number of iterations. 
  In contrast, nADMM exhibits varying iteration counts across nodes 
  due to the recursive structure described in 
  \ifdefined\SubmittedPaper
  \cite[Alogrithm~2]{hADMMfullpaper}.
  \else
  Algorithm \ref{alg:nADMM}.
  \fi
\item \textit{Maximum number of scalar values sent by a node ($\mathtt{MaxNodeVals}$):}
  This metric quantifies the maximum total communication load experienced by a node, which may become 
  a \changed{\textbf{communication bottleneck}} due to excessive data transmission during the optimization process.
\item \textit{Total number of scalar values sent in the network ($\mathtt{TotalVals}$):}
  This metric measures the overall communication cost across the network when solving the optimization problem \eqref{eq:prob-tree-opt}. 
  It reflects the \changed{\textbf{average communication load}} and is approximately proportional to the 
  total number of bits transmitted.
\end{itemize}

\changed{
The communication cost metrics \texttt{MaxNodeVals} and \texttt{TotalVals} count the scalar values sent by each node, whether originating from or relayed by that node.
In hADMM, these values comprise the queries and responses exchanged between nodes.
Because the corresponding vectors may have different dimensions, we count their individual elements.
The root exchanges messages only with its children; intermediate nodes receive queries from their parent and responses from their children, and send responses to their parent and queries to their children; leaf nodes only receive queries and return responses.
In nADMM, the communication cost includes queries and responses in both the inner and outer loops.
In fADMM, messages from nodes not directly connected to the root are relayed along their paths to the root, with each relay counted as communication by the relaying node. Residual checks are excluded in all cases.
In addition to these metrics, we evaluate convergence and accuracy using the optimality gap
\(
g_\mathrm{opt}(k)
= \frac{\left|J^k-J^\star\right|}
{\max\!\left(\left|J^0-J^\star\right|,\delta\right)}
\),
where $\delta>0$ is a small constant.
}

\subsection{Disjoint problems over randomized tree networks
\label{sec:examples:disjoint}}

This experiment aims to thoroughly validate Algorithm \ref{alg:hier} under various randomly generated tree-network topologies. 
Specifically, we used the proposed hADMM (Algorithm \ref{alg:hier}), nADMM (\ifdefined\SubmittedPaper \cite[Alogrithm~2]{hADMMfullpaper} \else Algorithm \ref{alg:nADMM}\fi), and fADMM for solving the disjoint problem~\eqref{prob:disjoint} with three network configurations as $(D, N) = (3,10), (3,20)$ and $(5,20)$.
In each configuration, the algorithms are tested over 1000 randomized tree topologies to demonstrate the convergence behavior predicted by Theorem \ref{thm:converge} and to compare the performance of these methods.
In all cases, the matrices $B_d$ have full column rank. 
Each local cost function is convex (but not strongly convex), and is defined as
\begin{equation*}
\changed{f_{d,i}(x_{d,i}) = \Vert x_{d,i} - u_{d,i} \Vert_2^4 + w \Vert x_{d,i} \Vert_1,}
\end{equation*}
\changed{where $w = 2$ and $u_{d,i}$ is a real vector preassigned for each node.}
We can see that in this setup, Scenario \ref{scn:convex_rank} is satisfied.
Each leaf node has a scalar local variable.
\changed{%
Table~\ref{tab:rho-sen} reports penalty-parameter sensitivity for $(D,N)=(3,10)$.
Guided by this study, $\rho$ for each method is manually tuned for each network topology (\eg $(D,N)=(3,10)$, $(3,20)$, and $(5,20)$) to achieve the fastest convergence for the respective methods with  $\varepsilon_\mathrm{abs}=10^{-4}$.
}

\begin{table}
  \centering
  \changed{
  \caption{$\mathtt{MaxNodeIters}$ with $\varepsilon_\mathrm{abs} = 10^{-4}$ and different $\rho$ ($D=3, N=10$).}
  \label{tab:rho-sen}
  \begin{tabular}{|c|c|c|c|c|c|c|c|c|}
    % \hline
    $\rho$ &10 &40 &80 &100 &160 &200 &400 &1000
    \\
    \hline
    hADMM &164 &54 &30 &30  &38  &46  &85  &196
    \\
    \hline
    fADMM &97  &33 &24 &27  &37  &44  &84  &195
    \\
    \hline
    $\begin{matrix}
        \text{nADMM} \\ \text{(at the root)}
    \end{matrix}$ 
    &98  &34 &21 &22  &25  &44  &50  &114
    \\
    \hline
  \end{tabular}
  }
  % \vspace{-1em}
\end{table}
The convergence to an optimal solution of hADMM over 1000 tree networks with $(D=3, N=10)$ is shown in Fig.~\ref{fig:Conver_valid}.
\changed{
From Fig. \ref{scn:cost_func_error}, the optimality gap $g_\mathrm{opt}$ converges to zero as the iteration increases. 
}
These results validate the convergence proof established in Theorem \ref{thm:converge} for non-smooth cost functions.

Table~\ref{tab:disjoint} summarizes the results of the performance metrics of the algorithms. 
As expected, nADMM consistently has the highest $\mathtt{MaxNodeIters}$ across all network configurations.
This indicates that nADMM converges more slowly compared to fADMM and hADMM.
Besides, nADMM also has the largest $\mathtt{MaxNodeVals}$ and $\mathtt{TotalVals}$, which increase geometrically with the network depth, showing the highest and substantial communication costs among the methods.

In Table~\ref{tab:disjoint}, we also observe that fADMM exhibits the smallest $\mathtt{MaxNodeIters}$, as it is based on the star topology.  
However, its communication costs, measured by $\mathtt{MaxNodeVals}$ and $\mathtt{TotalVals}$, are higher than those of hADMM. 
This is because nodes in lower layers (\eg $d > 3$) incur more communication overhead when exchanging information with the root node, since all nodes along the paths between them must relay their messages.
Statistical comparisons of communication costs between fADMM and hADMM are provided in the boxplots in Fig.~\ref{fig:total_com}.
They clearly show that hADMM significantly reduces $\mathtt{TotalVals}$, with the differences becoming increasingly pronounced as the network depth increases from $D = 3$ to $D = 5$.

\changed{%
A common tolerance is insufficient to establish equivalent termination accuracy across the three methods, particularly with nADMM's recursive inner solves. 
With primal-dual stopping criterion in section~\ref{sec:stop-crit},
to supplement the comparisons at termination, we average the normalized optimality gap over 1,000 randomly generated tree topologies.
Fig.~\ref{fig:gap-com} plots this gap against $\mathtt{TotalVals}$ for hADMM and fADMM.
At comparable objective-gap levels, hADMM transmits fewer values, with a larger advantage as the network depth grows from $D=3$ to $D=5$.
}

\begin{table*}
  \centering
  \changed{
  \caption{Performance comparison for solving the disjoint problem \eqref{prob:disjoint} between the baseline methods (nADMM and fADMM) and the proposed hADMM over 1000 randomized tree networks with different depths ($D$) and numbers of nodes ($N$).
  }
  \label{tab:disjoint} 
  \begin{tabular}{|c|c|c|c|c|}
    \hline
    Performance metric & Network & nADMM &fADMM &hADMM \\
    ($\varepsilon_\mathrm{abs} = 10^{-4}$)&(depth and \# nodes) & (min) mean (max) & (min) mean (max) & (min) mean (max)\\
    \hline
    \multirow{3}{*}{$\left. \mathtt{MaxNodeIters} \right.$} 
    &D=3, N = 10 &(181) 193 (236)   &\textbf{(23) 24 (28)} &(23) 30 (33)\\
    &D=3, N = 20 &(191) 266 (404)  &\textbf{(27) 30 (33)} &(26) 33 (43)\\
    &D=5, N = 20 &(11729) 14071 (17146)  &\textbf{(30) 38 (52)} &(35) 49 (82)\\
    \hline
    \multirow{3}{*}{$\left. \mathtt{MaxNodeVals} \right.$} 
    &D=3, N = 10 &(880) 1273 (1952)   &(230) 360 (624)   &\textbf{(140) 300 (480)}\\
    &D=3, N = 20 &(1882) 3450 (6462)  &(621) 900 (1782)  & \textbf{(351) 595 (1224)}\\
    &D=5, N = 20 &(14692) 49344 (267600)  &(726) 1466 (4480)  &\textbf{(250) 575 (1848)}\\
    \hline
    \multirow{3}{*}{ $\left. \mathtt{TotalVals} \right.$} 
    &D=3, N = 10 &(2745) 3966 (5693)  &(690) 1008 (1344)  &\textbf{(560) 840 (960)} \\
    &D=3, N = 20 &(5690) 10664 (18378)  &(1728) 2580 (3564) &\textbf{(1288) 1922 (2574)}\\
    &D=5, N = 20 &(109252) 299272 (773312)  &(3168) 5688 (12800)  &\textbf{(1848) 3079 (7260)}\\
    \hline
  \end{tabular}
  }
  \vspace{-1.5em}
\end{table*}

\begin{figure}[!tb]
\vspace{-4pt}
\changed{
\centering
\subfloat[ Optimality gap \label{scn:cost_func_error}]{\includegraphics[width = 0.5\linewidth]{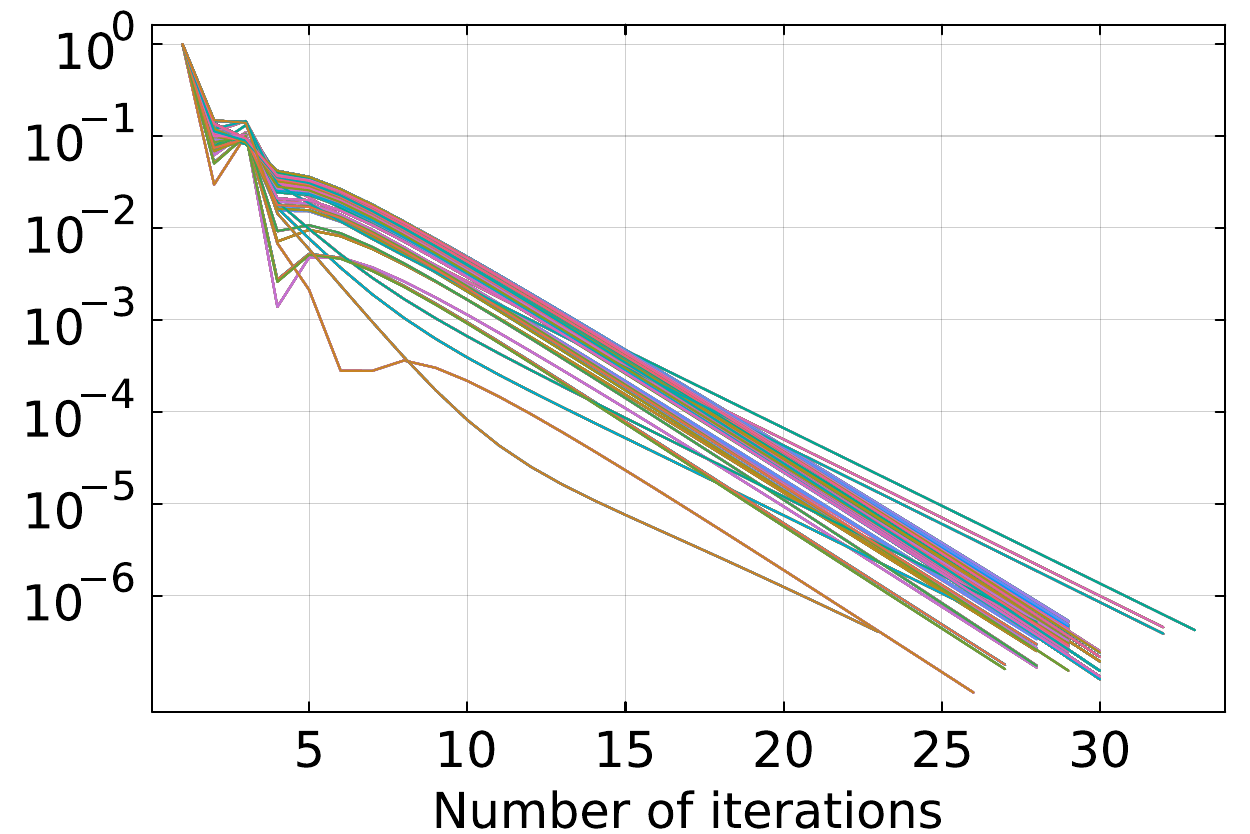}}
\subfloat[ Primal-dual residual \label{scn:residual}]{\includegraphics[width = 0.5\linewidth]{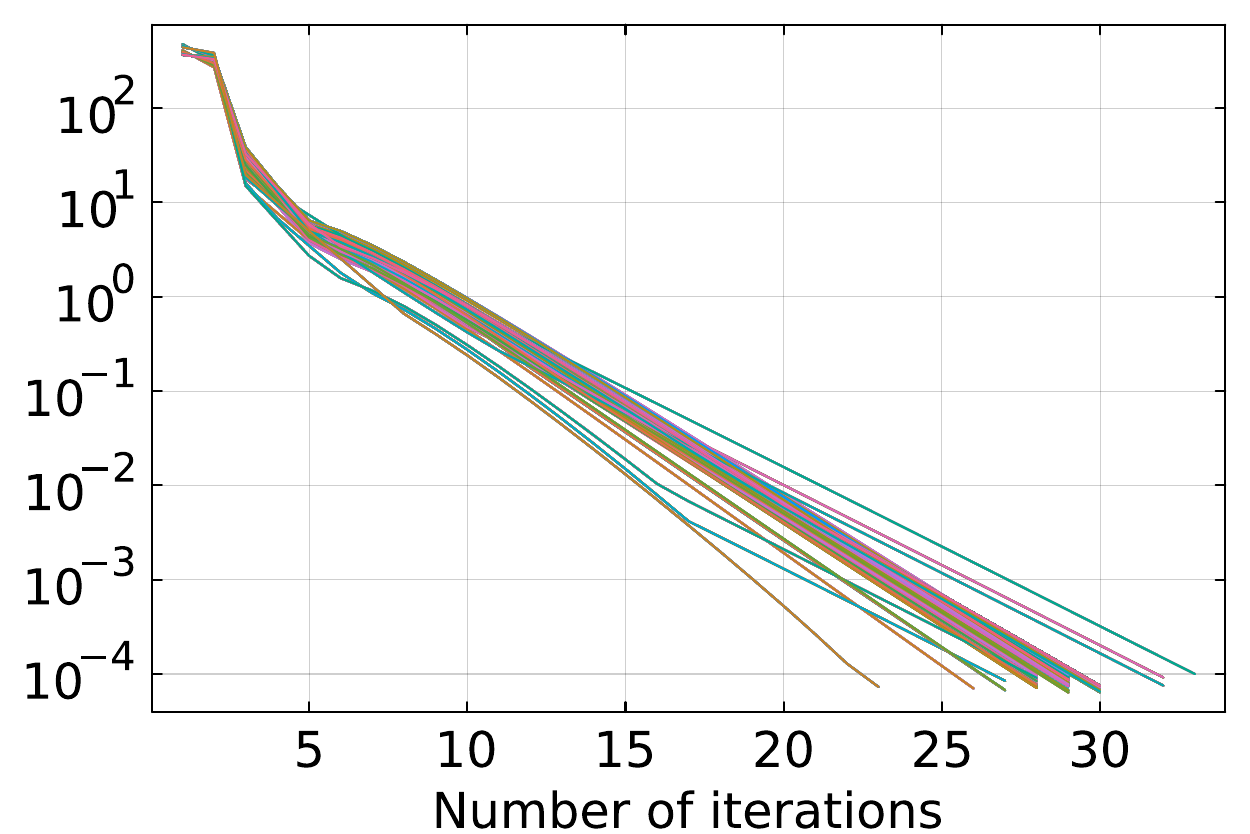}}
\caption{Convergence of hADMM to an optimal solution under 1000 randomized tree networks with ($D=3, N=10$).
}
\label{fig:Conver_valid}
}
\end{figure}

\begin{figure}[!tb]
\vspace{-4pt}
\changed{
\centering
\subfloat[$D=3, N=10$]{\includegraphics[width=0.33\linewidth]{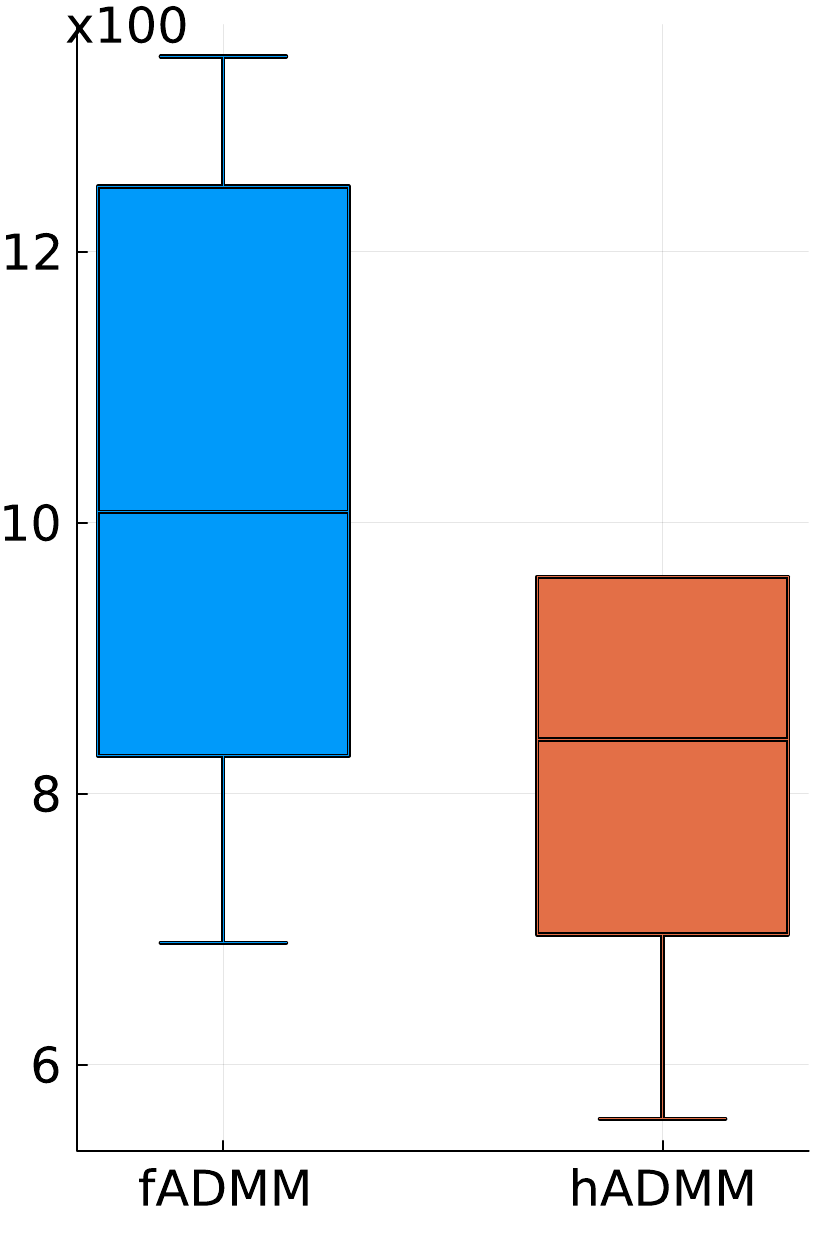}}
\subfloat[$D=3, N=20$]{\includegraphics[width=0.33\linewidth]{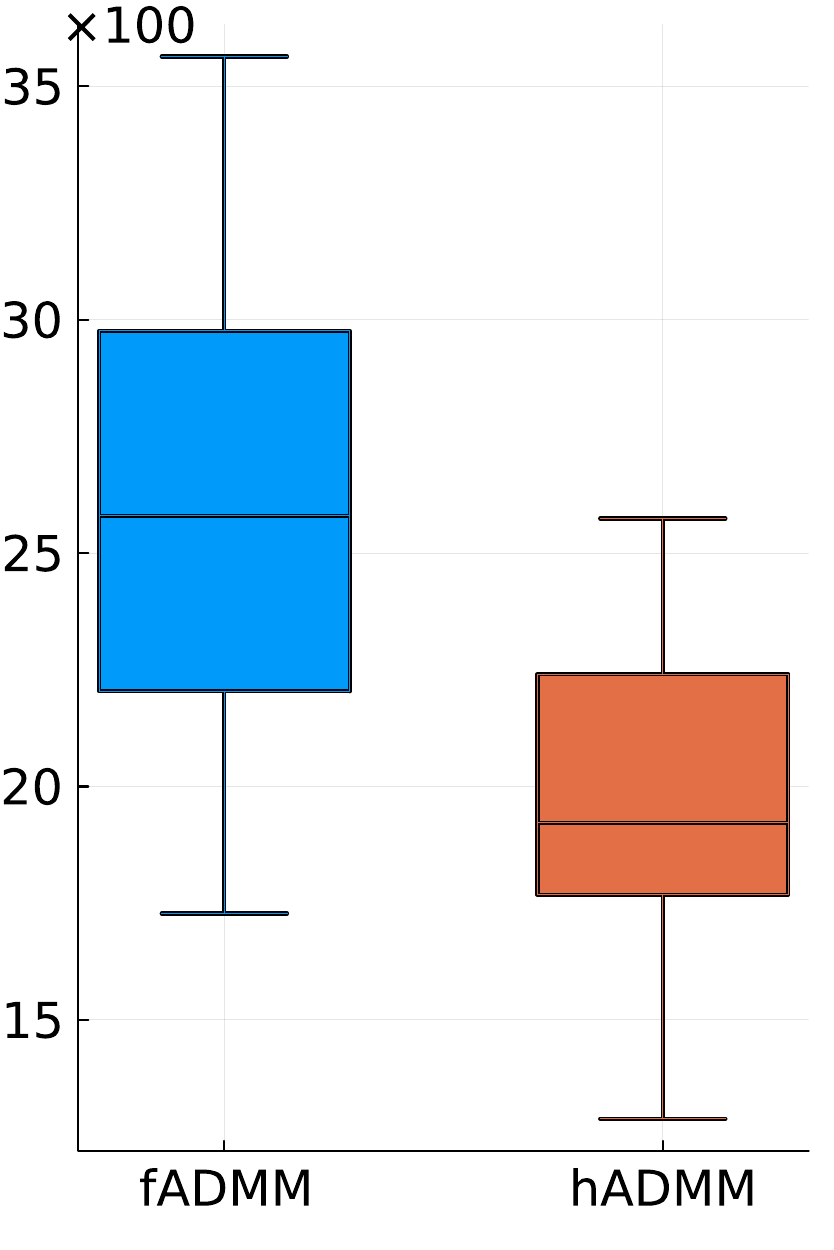}}
\subfloat[$D=5, N=20$]{\includegraphics[width=0.33\linewidth]{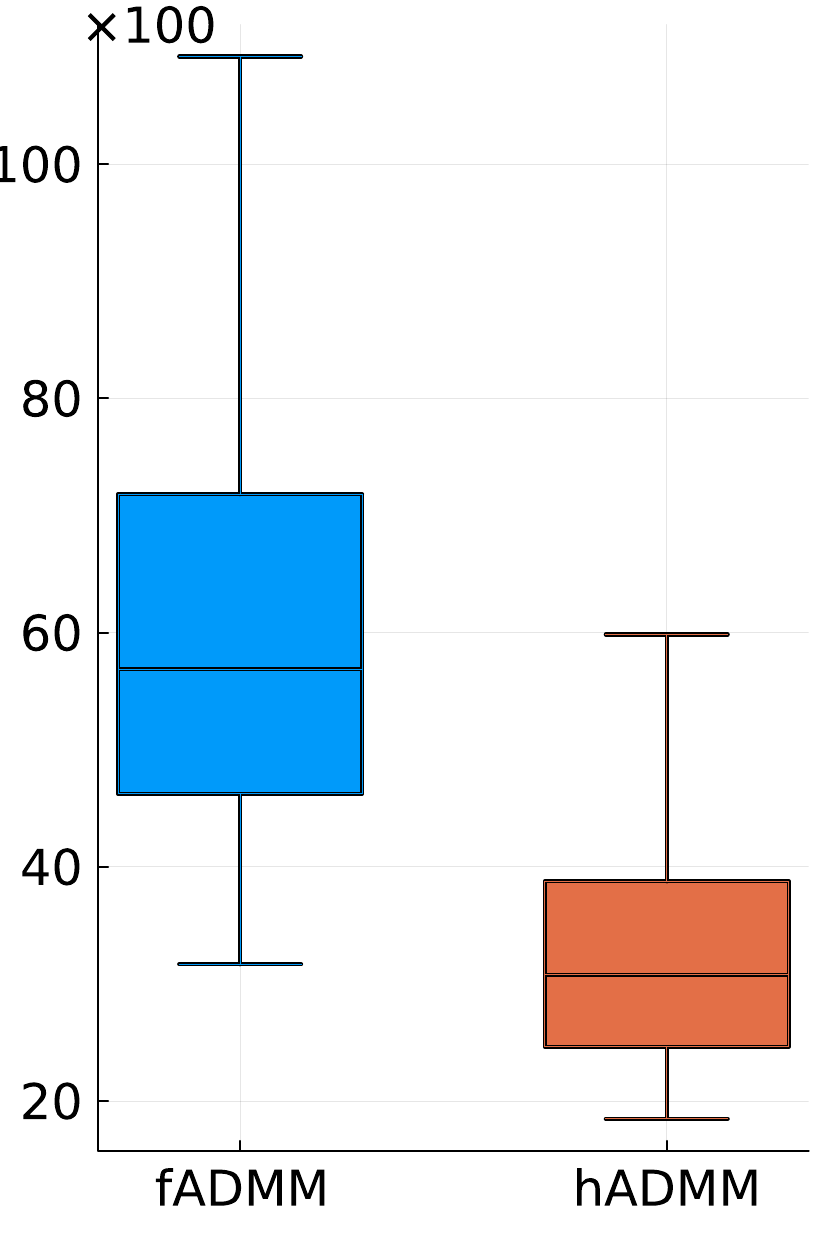}}
\caption{Boxplots of $\mathtt{TotalVals}$ of fADMM and hADMM under three network configurations, each is over 1000 randomized network topologies. \label{fig:total_com}}
}
\end{figure}

\begin{figure}[!tb]
\vspace{-4pt}
\centering
\changed{
\subfloat[$D=3, N=10$]{\includegraphics[width=0.5\linewidth]{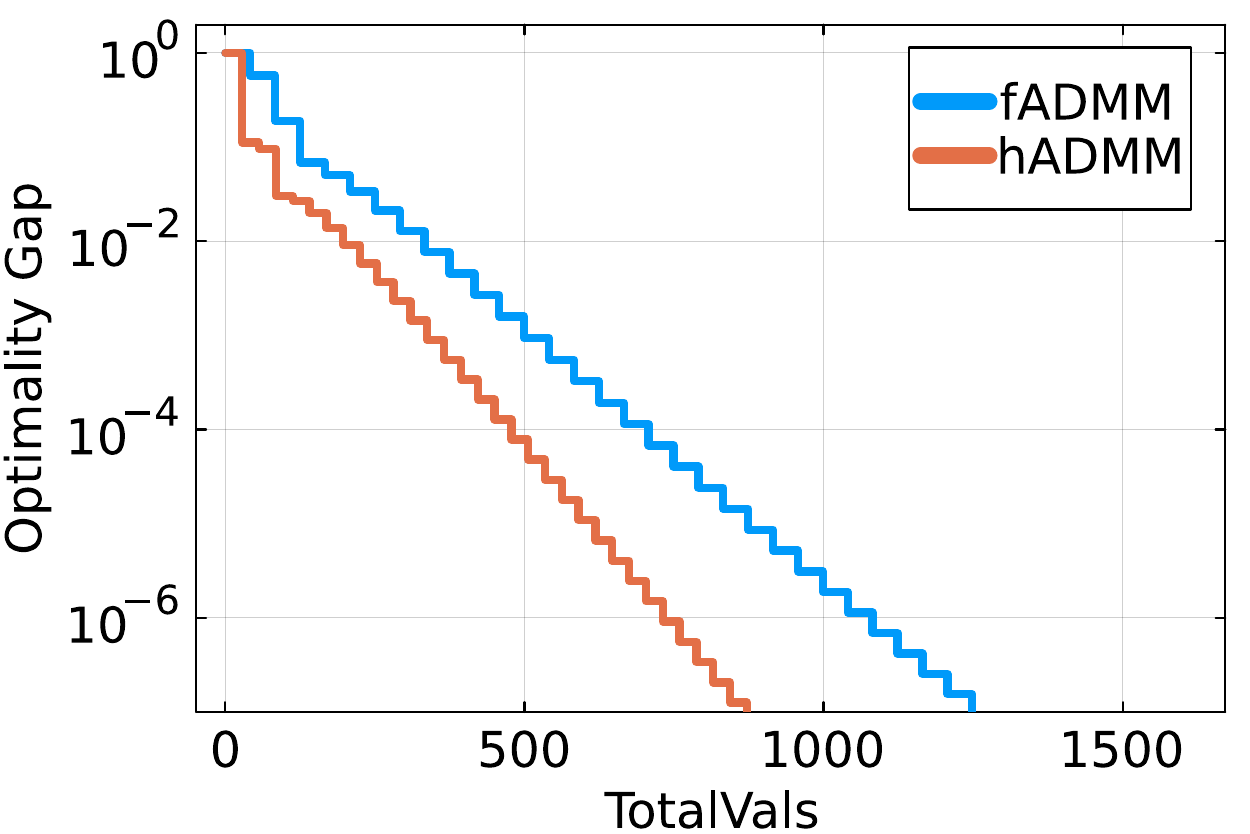}}  
\subfloat[$D=5, N=20$]{\includegraphics[width=0.5\linewidth]{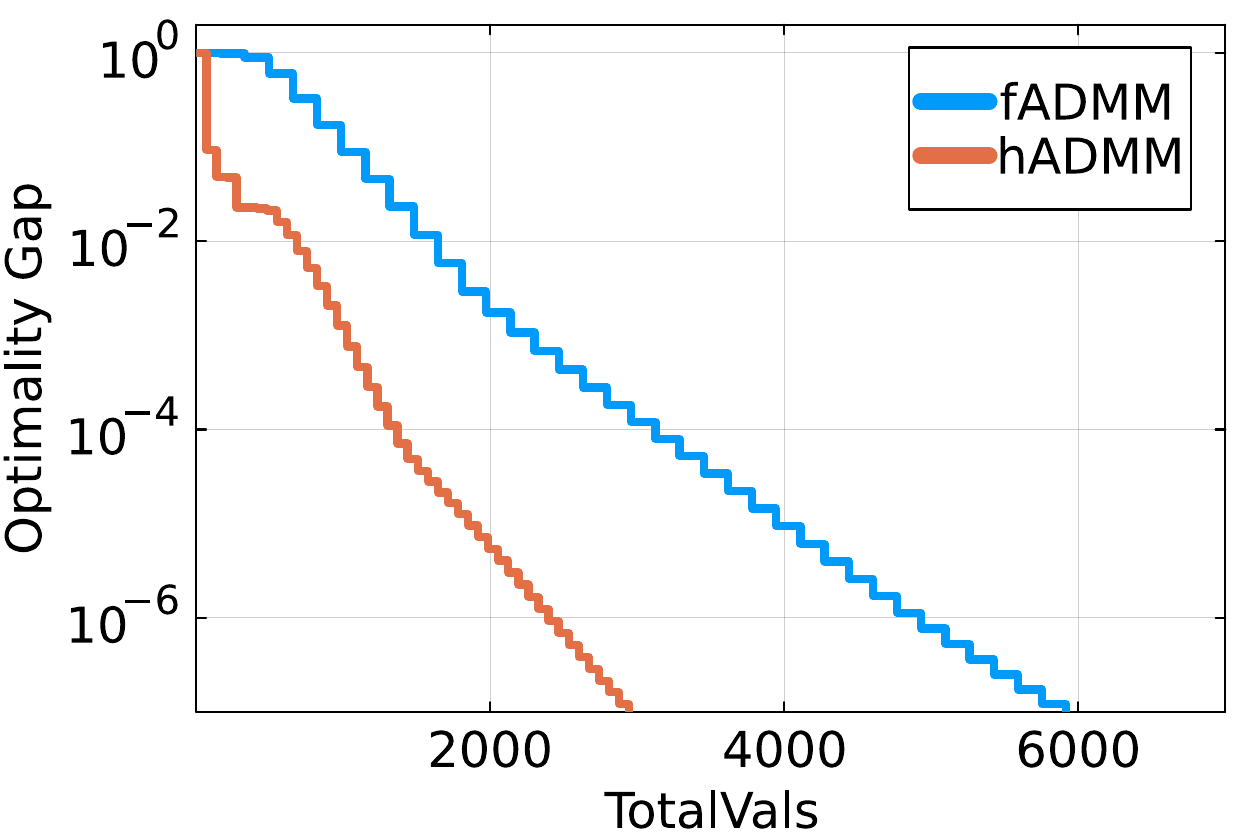}}  
\caption{Average optimality gap of 1000 randomized disjoint problems with respect to $\mathtt{TotalVals}$. \label{fig:gap-com}}
}
\end{figure}

\subsection{Power sharing problem}
\label{sec:examples:power-sharing}

Let us consider an electrical power distribution grid with a hierarchical structure described in the example in Section~\ref{sec:introduction}.
A power sharing problem is formulated for the grid, as presented in \cite{zhangCooperativeLoadScheduling2020a} and briefly summarized below. 
The utility company coordinates with $N_a$ aggregators, each managing multiple-device clusters to control power flows in the grid. 
Aggregators retain autonomy in managing and configuring these clusters. 
In real-world deployments, the power-flow control architectures and algorithms are 
secured between the aggregators and cluster layers and are not disclosed to the utility (root).
Therefore, the utility can only communicate with the aggregators, not directly with the clusters.
A power sharing problem optimally schedules the power allocations to all nodes in the network.
Solving the sharing problem, formulated as a hierarchical optimization problem \eqref{eq:prob-tree-opt}, enables the utility to coordinate power consumption across all aggregators to track a desired load profile while minimizing costs and satisfying all constraints at every level of the hierarchy.

For a time-dependent schedule with $h_t$ horizon steps, denote
$z_{ij} = [z_{ij}[t]]_{t = 1 \dots h_t} \in \RR_{>0}^{h_t}$
as the vector of powers allocated to cluster $j$ of aggregator $i$ over $h_t$ steps ahead. % $(t=1,2,\dots,h_t)$.
The \changed{aggregator} $i$ manages $N_{c,i}$ clusters.
We also denote $z_i[t] = \sum_{j=1}^{N_{c,i}} z_{ij}[t]$ as the power consumed by aggregator $i$ at time step $t$, and $z_i = [z_i[t]]_{t = 1 \dots h_t}$. 
The nonnegative element-wise vector $\beta \in \RR_{>0}^{h_t} $ stands for the amount of power obtained from the day-ahead wholesale market in order to satisfy the aggregator demand.
For the total power to track the load signal $\beta$, costs are incurred at all three levels and our objective is to minimize the following total cost:
\begin{align}
    \min_{z_i \succeq 0} \sum_{i=1}^{N_a} f_i(z_i) 
    + \lambda \Big\Vert \sum_{i=1}^{N_a} z_i - \beta \Big\Vert_2^2,
    \label{prob:sharing_exam}
\end{align}
where $f_i(z_i)$ is the cost function of aggregator $i$ and its clusters,
and $\lambda \Vert \sum_{i=1}^{N_a} z_i - \beta \Vert_2^2$ is the utility-level penalty for deviation from the aggregate load signal with penalty factor $\lambda > 0$. 
Each aggregator aims to minimize both the penalty for drawing power beyond a predefined limit and the customer discomfort.
Thus, the cost function $f_i$ %for aggregator $i$
is given by:
\begin{align*}
f_i(z_i) = \sum_{j=1}^{N_{c,i}} f_{ij} (z_{ij}) 
+ \eta \sum_{t=1}^{h_t} \max \left(\sum_{j=1}^{N_{c,i}} z_{ij}[t] - \tau_i, 0\right),
\end{align*}
where $f_{ij}(z_{ij}) = \sum_{t=1}^{h_t} c_{ij}(z_{ij}[t])$ with
\(c_{ij}(z_{ij}[t]) = \frac{1}{z_{ij}[t]} - \frac{1}{a_{ij}}\) if \(0 <z_{ij}[t]< a_{ij})\), and
\(c_{ij}(z_{ij}[t]) = 0\) if $z_{ij}[t] \geq a_{ij}$.
standing for the customer discomfort in cluster $j$, $\tau_i$ is the upper limit of power consumption agreed upon between aggregator $i$ and the utility, and $\eta$ is the associated per-unit cost of exceeding this limit.
In real-world applications, the optimal
$z^\star_{ij}$ after solving \eqref{prob:sharing_exam} is sent to the
corresponding cluster $j$ of aggregator $i$. Then, controllers at
the cluster level will use $z_{ij}^{\star}$ as a reference to regularize the power flow in the cluster.

To apply the hADMM to \eqref{prob:sharing_exam}, we consider the power network as a tree network where the utility, aggregators, and clusters are the root, parent nodes, and leaf nodes, respectively.
\changed{
Let us define the problem variables and functions:
\begin{itemize}
\item At the root node: $x_{1,1} = [z_i]_{i=1\dots N_a}$ and $f_{1,1}(x_{1,1}) = \lambda \Vert \sum_{i=1}^{N_a} z_i- \beta \Vert_2^2$ with $z_i \geq 0$.
\item At the aggregator nodes: 
  $x_{2,i} = [z_{ij}]_{j=1\dots N_{c,i}}$ and $f_{2,i}(x_{2,i}) = \eta \sum_{t=1}^{h_t} \max \left(\sum_{j=1}^{N_{c,i}}z_{ij}[t] - \tau_i, 0\right)$.
\item At the cluster nodes: 
  $x_{3,ij} = z_{ij}$ and $f_{3,ij} = f_{ij}$.
\end{itemize}
}
The sharing problem \eqref{prob:sharing_exam} is now written in the form of \eqref{eq:prob-tree-opt}. 

For our numerical experiment, the power network has three aggregators and each aggregator is connected to two clusters.
The step size is 15 minutes and the planning period horizon is two hours ($h_t = 8$).
The other parameters are 
$\beta = [28, 16, 20, 22, 18, 24, 15, 18]$,
$\tau = [9, 8, 11]$,
$\eta = 5$, $\lambda = 1$,
the discomfort thresholds for the clusters in the three aggregators are $[a_{ij}] = [[2.0,5.0],[3.0,2.0],[3.0,5.0]]$.
\changed{
the penalty parameters for the methods 
are $\rho_\mathrm{hADMM} = 1.4$, $\rho_\mathrm{nADMM} = 1$, and $\rho_\mathrm{nADMM} = 1.5$, which were chosen by minimizing the number of iterations for each method. 
The stopping tolerance is $\varepsilon_\mathrm{abs} = 10^{-3}$.
}

Note that in this example, the cost functions are non-smooth and not strongly convex.
The convergence of the hADMM for the power sharing experiment is shown in Fig.~\ref{fig:sharing_converge}. 
Evidently, the optimality gap and the primal residual converge 
as the number of iterations increases, validating Theorem~\ref{thm:converge}.
\changed{
Furthermore, the hADMM has a significantly lower communication cost compared to the nADMM and fADMM as shown in 
Table~\ref{tab:sharing_problem}.
}

\begin{table}
\centering
\changed{
  \caption{Performance comparison for the power sharing problem.}
  \label{tab:sharing_problem}
  \begin{tabular}{|c|c|c|c|}
    \hline
    Performance metric &nADMM &fADMM &hADMM \\
    \hline
    $\mathtt{MaxNodeIters}$ &351 &\textbf{87} &92   \\ 
    \hline
    $\mathtt{MaxNodeVals}$ &14416 &8352 &\textbf{4416}\\
    \hline
    $\mathtt{TotalVals}$ &31856 &25056 &\textbf{17664} \\
    \hline
  \end{tabular}
  }
  \vspace{-1.5em}
\end{table}

\begin{figure}[!tb]
  \centering
  \changed{
  \subfloat[Optimality gap]{\includegraphics[width=0.5\linewidth]{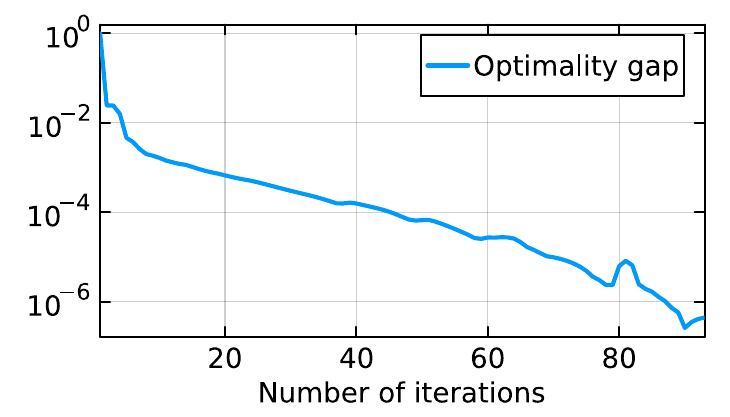}} 
  \subfloat[Primal-dual residual]{\includegraphics[width=0.5\linewidth]{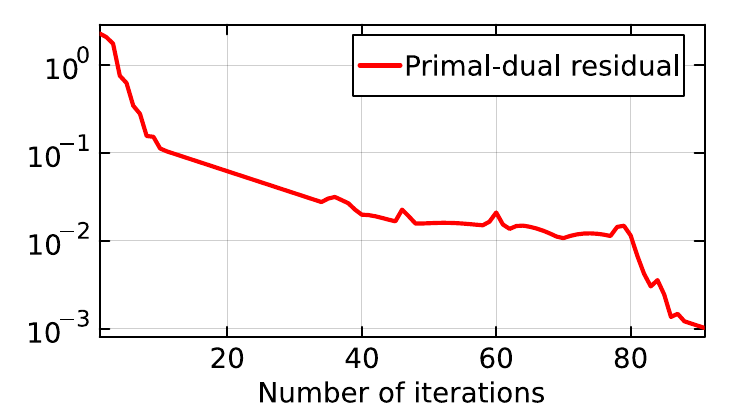}}   
  \caption{Convergence of the hADMM for the power sharing problem.
  }
  \label{fig:sharing_converge}
  \vspace{-0.5em}
  }
\end{figure}

\subsection{Convergence rate of hADMM for different topologies}
\label{sec:examples:convergence_rate}

\changed{This experiment validates the linear convergence rate analysis of the hADMM for different topologies.}
Two topologies are considered with the same numbers of levels and nodes as shown in Fig.~\ref{fig:tree_structures}.
We apply hADMM to solve the following optimal consensus problem:
\begin{subequations}
  \begin{align}
    \minimize 
    &\sum_{d=1}^{D} \sum_{i\in \calP_d \cup \calL_d}^{} f_{d,i}(x_{d,i})
      \label{eq:consensus_prob}
    \\
    \text{s.t.}\, & x_{d,i} = x_{d+1,j},\, \forall d \in \bbN_{D-1}, i \in \calP_d, j \in \calC_{d,i}\text,
  \end{align}
\end{subequations}
\changed{
in which %for $d = 1$,
$f_{1,1} = \frac{1}{2} \|x_{1,1} - u\|_2^2 + \|x_{1,1}\|_2^4$ and
for $d>1$,
\(f_{d,i}(x_{d,i}) = \frac{1}{2} \|x_{d,i} - u\|_2^2 + \lambda_{d,i} \sum_{j=1}^{n_{d,i}}\log(1 + e^{-(x_{d,i})_j}) \), where $(x_{d,i})_j$ is the $j$-th element of vector $x_{d,i}$.
The cost functions are strongly convex with moduli $\alpha_{d,i} = \frac{1}{2}$. 
The gradients of the non-root cost functions are Lipschitz continuous with constants $l_{d,i} = 1 + \frac{\lambda_{d,i}}{4}$.}
The remaining parameters are  $u = 2$, $\lambda_{d,i} = 1$, \changed{and the stopping tolerance} $\varepsilon_\mathrm{abs} = 10^{-4}$.
The same initialization of the variables is used across the topologies.
\changed{The theoretical linear convergence rate parameters given by Theorem~\ref{thm:convergence_rate} are $\gamma^\star = 0.097$ for Topology 1 and $\gamma^\star = 0.023$ for Topology 2, where the largest $\gamma^\star$ was determined using bilevel optimization as presented in Remark~\ref{rm:bilevel}.}
\changed{As shown in Fig.~\ref{fig:convergence_rate}, which plots ${\hat{V}^k}\slash{\hat{V}^0}$ over the iterations, $\hat V^k \slash \hat{V}^0$ in Topology 1 with the higher $\gamma^\star$ contracts more than three times as fast as that in Topology 2 with the lower $\gamma^{\star}$, requiring $33$ iterations versus $104$ iterations.}
\changed{Fig.~\ref{fig:convergence_rate} also shows the contraction behavior of $\hat V^k \slash \hat{V}^0$ in both topologies.
The two dashed lines illustrate the decaying upper-bounds 
corresponding to the two $\gamma^\star$ values, validating the Lyapunov contraction rates \eqref{eq:lya_ctrc}.
}

\begin{figure}[!tb]
  \centering
  \ifdefined\SubmittedPaper
  \includegraphics[width=0.75\linewidth]{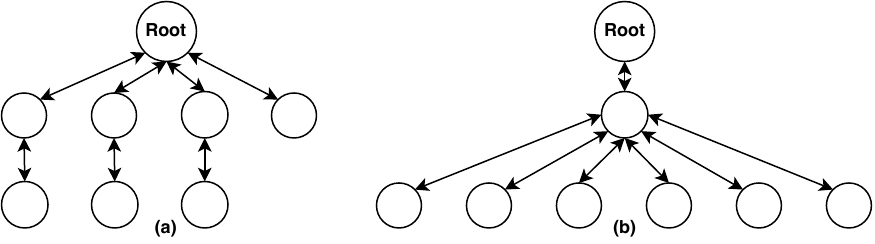}
  \else
  \includegraphics[width=0.9\linewidth]{figs/topologies.pdf}
  \fi
  \caption{
  Two topologies with $D=3, N=8$: a) Topology 1 has balance between levels 2 \& 3, and b) Topology 2 has most nodes in the lowest level.}
  \label{fig:tree_structures}
  \vspace{-0.5em}
\end{figure}

\begin{figure}[!tb]
  \centering
  \ifdefined\SubmittedPaper
  \includegraphics[width=0.75\linewidth]{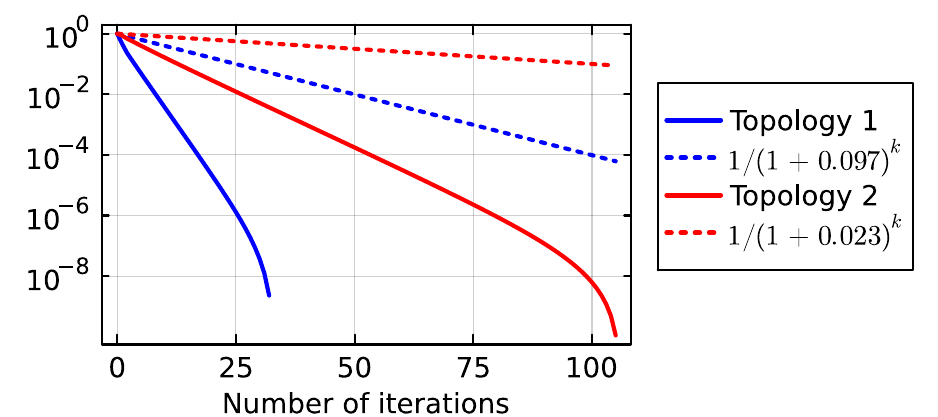}
  \else
  \includegraphics[width=0.9\linewidth]{figs/V_trajectory_hADMM.pdf}
  \fi
  \caption{\changed{Convergence of $\hat V^k/ \hat V^0$ by hADMM for the two topologies in Fig.~\ref{fig:tree_structures}.}
  }
  \label{fig:convergence_rate}
  \vspace{-0.5em}
\end{figure}

\section{Conclusion}
\label{sec:conclusion}

This paper addresses a general hierarchical optimization problem over tree-structured networks.
The key technique lies in the reformulation of the augmented Lagrangian to explicitly exploit the hierarchical network structure, which enables the proposed hierarchical ADMM (hADMM) algorithm to solve the optimization problem in a fully distributed manner and at substantially reduced communication costs.
\changed{The convergence guarantees and convergence rate of the hADMM are established under convexity assumptions.
Three numerical examples thoroughly demonstrated the effectiveness of the proposed hADMM algorithm and validated the theoretical results. 
Future work will focus on extending the hADMM framework beyond the fixed bidirectional tree setting considered here, including time-varying links, asynchronous updates, and unreliable communication with delays or packet losses.}

\appendices
\input{appendix.tex}

\label{sec:references}
\bibliographystyle{IEEEtran}
\bibliography{refs}

\end{document}

%% file: deflatex.tex
\font\myownfont=cmr17 scaled \magstep5
\def\psfancypar#1#2{\def\biginitial#1{{\myownfont#1}}%
	\def\makeinitial#1{\setbox8\hbox{\strut\vbox to 1.3ex
			{\hbox{\biginitial#1}\vskip -4pc plus 3.5pc minus 3.5pc}}}%
	\makeinitial#1%
	\ifdim\parindent>1.3\wd8\dimen8=\parindent
	\else\dimen8=1.3\wd8\fi
	\hangindent=\dimen8\hangafter=-2
	\noindent
	\strut\hskip-1\dimen8\box8{\sc#2}}%

\DeclareMathOperator*{\tridiag}{tridiag}

%% file: appendix.tex
\ifdefined\SubmittedPaper
\else
\section{The General nADMM Algorithm}
\label{app:nADMM}

Let us define recursively the aggregate variable vector $X_{d,i} \coloneq \big[x_{d,i}^\top,~ [X_{d+1,j}]_{j\in \calC_{d,i}}^\top \big]^\top$, that concatenates all the node variables in the subtree of node $(d,i)$.
For a leaf node $(d,i)$, $X_{d,i} \coloneq x_{d,i}$.
Similarly, define recursively the aggregate objective function of the subtree of node $(d,i)$ as
$F_{d,i}(X_{d,i}) = f_{d,i}(x_{d,i}) + \sum_{j\in \calC_{d,i}} F_{d+1,j}(X_{d+1,j})$, 
where $F_{d,i}(X_{d,i}) = f_{d,i}(x_{d,i})$ for a leaf node.
Finally, all the constraints~\eqref{eq:tree-cstr} in the subtree of a parent node $(d,i)$ can be aggregated 
as $\calT_{d,i} X_{d,i} = \tau_{d,i}$, which can be split recursively as $A_{d,ij} x_{d,i} + B_{d,ij} x_{d+1,j} = c_{d,ij}$ 
and $\calT_{d+1,j} X_{d+1,j} = \tau_{d+1,j}$ for all $j\in \calC_{d,i}$,
where
\begin{align*}
&\calT_{d,i} \!=\! \begin{bmatrix} [A_{d,ij}]_{j\in \calC_{d,i}} &\!\!\![B_{d,ij}~ 0]_{j\in \calC_{d,i}}^{\diag}\\0 &\![\calT_{d+1,j}]_{j\in \calC_{d,i}} 
\end{bmatrix}\!,\,
\tau_{d,i} \!=\! \begin{bmatrix} [c_{d,ij}]_{j\in \calC_{d,i}} \\ [\tau_{d+1,j}]_{j\in \calC_{d,i}} 
\end{bmatrix}\!\text.
\end{align*}
Then, \eqref{eq:prob-tree-opt} is reformulated as
\begin{subequations}
\label{eq:nADMM-X1}
\begin{align}
  \minimize_{X_{1,1}} \; &F_{1,1}(X_{1,1}) = f_{1,1}(x_{1,1}) +\!\! \sum_{j\in \calC_{1,1}} F_{2,j}(X_{2,j})\text,
                           \label{eq:nADMM-X1:obj}
  \\
  \text{s.t.} \; &A_{1,1j} x_{1,1} + B_{1,1j} x_{2,j} = c_{1,1j}, \quad\forall j\in \calC_{1,1}\text,\\
  & \calT_{2,j} X_{2,j} = \tau_{2,j}, \quad\forall j\in \calC_{1,1}\text.
  % & A_{d,ij} x_{d,i} + B_{d,ij} x_{d+1,j} = c_{d,ij}, \\ 
  %   &\qquad\forall d = 2, \dots, D-1, i \in \calP_d, j\in \calC_{d,i}.
    \label{eq:nADMM-X1:constraint}
\end{align}  
\end{subequations}
Using the standard ADMM, \eqref{eq:nADMM-X1} is solved by the %following
iterations
\begin{subequations}
\label{eq:nADMM-XX}
\begin{align} 
    x_{1,1}^{k+1} =& \displaystyle\argmin_{x_{1,1}} f_{1,1}(x_{1,1})
    + \frac{\rho}{2}\Vert A_{1,1} x_{1,1} \!-\! v_{1,1}^k \Vert_2^2,
    \\
    X_{2,j}^{k+1} =& \displaystyle \argmin_{X_{2,j}} F_{2,j}(X_{2,j}) \!+\! \frac{\rho}{2} \Vert B_{1,1j} x_{2,j} \!-\!  q_{1,1j}^{k+1} \Vert_2^2,
    \label{eq:nADMM-X2}
    \\
    &\;\quad\text{s.t.}~\calT_{2,j} X_{2,j} = \tau_{2,j}, \nonumber
    % &~~~+\! \frac{\rho}{2}\Vert A_{1,1j} x_{1,1}^{k+1} \!+\! B_{1,1j}  x_{2,j} \!-\! b_{1,1j} \!+\! \frac{1}{\rho} y_{1,1j}^k \Vert_2^2,
    \\
    y_{1,1j}^{k+1} =& y_{1,1j}^k + \rho(A_{1,1j} x_{1,1}^{k+1} + B_{1,1j} x_{2,j}^{k+1} - c_{1,1j}),
\end{align}
\end{subequations}
where %for $d=1$ and $j\in \calC_{1,1}$, 
$v_{1,1}^k \!=\! - [B_{1,i}]_{j\in \calC_{1,i}}^{\diag} [x_{2,j}^{k}]_{j\in \calC_{1,1}} \!+\! c_{1,1} \!-\! \rho^{-1} y_{1,1}^k$ and
$q_{1,1j}^{k+1} \!= \!-\! A_{1,1j} x_{1,i}^{k+1} \!+\! c_{1,1j} \!-\! \rho^{-1} y_{1,1j}^k$.

For each node $(2,j)$ at level $d=2$, let us define
\begin{equation*}
  \hat f_{2,j}(x_{2,j}; q) =  f_{2,j}(x_{2,j}) + \frac{\rho}{2} \Vert B_{1,1j} x_{2,j} -  q \Vert_2^2\text.
\end{equation*}
% where $(d,i)$ is the parent node of $(d+1,j)$. %, $i \in \calP_{d,j}$.
%
If $(2,j)$ is a leaf node, \eqref{eq:nADMM-X2} becomes the proximal operator $\argmin_{x_{2,j}} \hat f_{2,j}(x_{2,j}; q_{1,1j}^{k+1})$.
If $(2,j)$ is a parent node, \eqref{eq:nADMM-X2} is equivalent to
\begin{subequations}
\label{op:nADMM-X2}
\begin{align}
    \minimize_{X_{2,j}} \;& \hat f_{2,j}(x_{2,j}; q_{1,1j}^{k+1}) + \textstyle\sum_{l \in C_{2,j}}^{} F_{3,l}(X_{3,l})\text,
    \\
    \text{s.t.}\;& A_{2,jl} x_{2,j} + B_{2,jl} x_{3,l} = c_{2,jl}\text, \quad \forall l \in \calC_{2,j}\text,
    \\
    & \calT_{3,l} X_{3,l} = \tau_{3,l}\text, \; \forall l \in \calC_{2,j}\text.
\end{align}
\end{subequations}
Observe that \eqref{op:nADMM-X2} has the same form as \eqref{eq:nADMM-X1}, hence it can be solved using the same algorithm in a recursive manner.
This recursive, or nested, nADMM algorithm is formalized in Algorithm~\ref{alg:nADMM},
where for $d=2,\dots, D-1$ and $j\in \calC_{d,i}$,
\begin{subequations}
    \begin{align}
        &v_{d,i}^k \!=\! - [B_{d,i}]_{j\in \calC_{d,i}}^{\diag} [x_{d+1,j}^{k}]_{j\in \calC_{d,i}} \!+\! c_{d,i} \!-\! \rho^{-1} y_{d,i}^k, 
        \label{eq:nADMM-v}
        \\
        &q_{d,ij}^{k+1} \!= \!-\! A_{d,ij} x_{d,i}^{k+1} \!+\! c_{d,ij} \!-\! \rho^{-1} y_{d,ij}^k,
        \label{eq:nADMM-q}
        \\
        &\hat f_{d+1,j}(x_{d+1,j}; q) =  f_{d+1,j}(x_{d+1,j}) \nonumber \\
        &\qquad\qquad\qquad\qquad\qquad + \frac{\rho}{2} \Vert B_{d,ij} x_{d+1,j} - q \Vert_2^2\text.
        \label{eq:nADMM-hatf}
    \end{align}
\end{subequations}

\begin{algorithm}[t]
  \caption{Nested ADMM (nADMM) algorithm for hierarchical optimization problem \eqref{eq:prob-tree-opt}}
   \label{alg:nADMM}
  \begin{algorithmic}[1]
    % \Require Tree network, $\lambda = \frac{1}{\rho} > 0$.
    % \Ensure Reaching to an optimal solution.
    \Function{nADMM}{$(d,i), q$}
      \If{$(d,i)$ is leaf node}
        \State {\bf return} 
        $\argmin_{x} \hat f_{d,i}(x; q)$ \Comment{with $\hat f_{d,i}$ in \eqref{eq:nADMM-hatf}}
      \EndIf
      % \State $k = 1$
      \Repeat{} with counter $k$ starting from $1$
        \State Compute $v_{d,i}^k$ in \eqref{eq:nADMM-v}
        \If{$(d,i)$ is root node}
          \State $x_{d,i}^{k+1} = \displaystyle \argmin_{x} f_{d,i}(x) \!+\! \frac{\rho}{2}\Vert A_{d,i} x  \!-\! v_{d,i}^k\Vert_2^2$
        \Else
          \State $x_{d,i}^{k+1} = \displaystyle \argmin_{x} \hat f_{d,i}(x_{d,i}; q) \!+\! \frac{\rho}{2}\Vert A_{d,i} x  \!-\! v_{d,i}^k\Vert_2^2$
        \EndIf
        \For{$j \in \calC_{d,i}$} \Comment{In parallel for all $j$}
        %   \State $q^{k+1}_{d,ij} = - A_{d,ij} x_{d,i}^{k+1} + c_{d,ij} - \frac{1}{\rho} y_{d,ij}^k$
          \State Compute $q^{k+1}_{d,ij}$ in \eqref{eq:nADMM-q}
          \State $x^{k+1}_{d+1,j} =$ \Call{nADMM}{$(d+1,j)$, $q^{k+1}_{d,ij}$}
        \EndFor
        \State $y_{d,ij}^{k+1} \!=\! y_{d,ij}^k \!+\! \rho(A_{d,ij} x_{d,i}^{k+1} \!+\! B_{d,ij} x_{d+1,j}^{k+1} \!-\! c_{d,ij})$
         % \Until{ $\max_{j\in \calC_{d,i}} \Vert A_{d,i}x_{d,i}^{k+1} - B_{d,ij}x_{d+1,j}^{k+1} \Vert_\infty \leq \varepsilon $.}
      \Until{convergence}
      \State {\bf return} %$x_{d,i}^{\star} =
      $x_{d,i}^{k+1}$
    \EndFunction
    
    \State \Call{nADMM}{$(1,1)$, $[]$} \Comment{Solve problem \eqref{eq:prob-tree-opt} at root node}
% \vspace{-1em}
\end{algorithmic}
\end{algorithm}
\fi

% \ifdefined\SubmittedPaper
% \else
\section{Proof of Theorem \ref{thm:converge}\label{apdx:thm1_prf}}
% \changed{
% \begin{definition} \label{def:cluster}
% $\bar a$ is a cluster point of  $\{a_k\}$  if, for given  $\epsilon \in \bbR_{>0}$  and given  $N>0$,  $\exists n \in \bbN_{>0}$  such that  $\Vert a_n - \bar a\Vert_2 \leq \epsilon$.
% \end{definition}
% }

\begin{lemma}[\cite{boyd2004convex}]
\label{lem:convex}
For any convex function $f$ %, the following inequality holds $\forall z , z^\prime \in \mathrm{Dom}(f)$,
and for all $z, z^\prime \in \mathrm{Dom}(f)$,
$\left( \partial f(z) - \partial f(z^\prime) \right)^\top(z - z^\prime) \geq 0$.
\end{lemma}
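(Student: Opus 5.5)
The plan is to read the set-valued notation in the statement as quantified over elements: for every $g \in \partial f(z)$ and every $g' \in \partial f(z')$, we want $(g - g')^\top (z - z') \geq 0$. The proof will use only the definition of the subdifferential, applied at the two points in turn, and then sum the two resulting inequalities.

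First, since $g \in \partial f(z)$, the subgradient inequality at $z$, evaluated at the point $z'$, gives
\begin{equation*}
  f(z') \geq f(z) + g^\top (z' - z).
\end{equation*}
Second, since $g' \in \partial f(z')$, the subgradient inequality at $z'$, evaluated at the point $z$, gives
\begin{equation*}
  f(z) \geq f(z') + (g')^\top (z - z').
\end{equation*}
Both $z$ and $z'$ lie in $\mathrm{Dom}(f)$, so $f(z)$ and $f(z')$ are finite. Moreover, in the setting of the paper the functions are finite-valued, so the subdifferentials are nonempty by the Notation section.

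Adding the two inequalities cancels $f(z) + f(z')$ on both sides and leaves
\begin{equation*}
  0 \geq g^\top (z' - z) + (g')^\top (z - z') = -(g - g')^\top (z - z'),
\end{equation*}
which is exactly $(g - g')^\top (z - z') \geq 0$.

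There is no real obstacle here. The only care needed is to state explicitly that the set-valued expression in the lemma means the inequality holds for arbitrary selections $g$ and $g'$ from the two subdifferentials. The cancellation of the function values is legitimate because both values are finite.
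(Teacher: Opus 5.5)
Your proof is correct: summing the subgradient inequalities at $z$ and at $z'$ and cancelling the finite values $f(z)$ and $f(z')$ is the standard proof of monotonicity of the subdifferential. The paper gives no proof of its own and only cites the result to Boyd and Vandenberghe. Your remark about nonemptiness of $\partial f$ is harmless but not needed, since the elementwise claim holds vacuously when either subdifferential is empty.
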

To simplify the presentation, let us denote:
\begin{align}
&r_d^{k+1} = A_d x_d^{k+1} + B_d x_{d+1}^{k+1} - c_d,\quad d=1,\dots,D-1,
\label{eq:conv-proof:res}
% \\
% &\changed{s_d^{k+1} = \rho A_d^\top B_d (x_{d+1}^{k+1} - x_{d+1}^k),\quad d=1,\dots,D-1}\text, 
\\
&\changed{\tilde{y}_d^k = y_d^k - y_d^\star,~ \tilde{x}_{d+1}^k = B_d (x_{d+1}^k - x_{d+1}^\star)}\text,
\\
&\changed{\delta_d^{k+1} = \rho B_{d}(x_{d+1}^{k+1} - x_{d+1}^k),\quad d=1,\dots,D-1}\text,
\\
&J^k = \sum_{i=1}^{D} f_d(x_d^k), \quad J^\star = \sum_{i=1}^{D} f_d(x_d^\star),
\label{eq:conv-proof:J}
\\
&\changed{V_d^k = \frac{1}{\rho}\Vert \tilde{y}_d^k\Vert_2^2 + \rho\Vert \tilde{x}_{d+1}^k \Vert_2^2 \text,~
V^k = \sum_{d=1}^{D-1} V_d^k \text.} \label{eq:conv-proof:Vk}
\end{align}
Here, $([ x_d^{\star}]_{d\in \bbN_{D}}, [ y_d^{\star}]_{d\in \bbN_{D-1}})$ is any KKT point satisfying \eqref{thm:prf_KKT1}-\eqref{thm:prf_KKT4}.
We will prove the theorem in two steps.
Inspired by the proof of ADMM in \cite{goldstein2014fast, nishihara2015general}, we first prove that
\begin{align} \label{eq:conv-proof:prf_increment}
V^k \!\geq\!  V^{k+1} \!+\! \rho \sum_{d=1}^{D-1} \Vert r_d^{k+1} \!-\! B_d (x_{d+1}^{k+1} \!-\! x_{d+1}^k)\Vert_2^2.
\end{align}
After the forward pass \eqref{eq:forward}, $x_d^{k+1}$ satisfies the generalized first-order condition \cite[Chapter 3.7]{beck2017first}: 
$0 \in \partial f_1 (x_1^{k+1}) \!+\! A_1^\top (y_1^k \!+\! \rho  (A_1 x_1^{k+1} \!+\! B_1 x_2^k \!-\! c_1))$, 
$0 \in \partial f_D (x_D^{k+1}) +\! B_{D-1}^\top (y_{D-1}^k \rho (A_{D-1} x_{D-1}^{k+1} \!+\! B_{D-1} x_D^{k+1} \!-\! c_{D-1}))$, 
and $0 \in \partial f_d (x_d^{k+1}) + A_d^\top (y_d^k + \rho  (A_d x_d^{k+1} \!+\! B_d x_{d+1}^k - c_d)) + B_{d-1}^\top (y_{d-1}^k + \rho(A_{d-1} x_{d-1}^{k+1} + B_{d-1} x_d^{k+1} - c_{d-1}))$
% \begin{align*} 
%     &0 \in \partial f_1 (x_1^{k+1}) \!+\! A_1^\top (y_1^k \!+\! \rho  (A_1 x_1^{k+1} \!+\! B_1 x_2^k \!-\! c_1)),  
%     % &\text{if~} d=1,
%     \\
%     &0 \in \partial f_D (x_D^{k+1}) +\! B_{D-1}^\top (y_{D-1}^k 
%     \\
%     &~~~~~~ +  \rho (A_{D-1} x_{D-1}^{k+1} \!+\! B_{D-1} x_D^{k+1} \!-\! c_{D-1})),
%     % &\text{if~} d=D,
%     \\
%     &0 \in \partial f_d (x_d^{k+1}) \!+\! A_d^\top (y_d^k \!+\! \rho  (A_d x_d^{k+1} \!+\! B_d x_{d+1}^k \!-\! c_d)) 
%     \\
%     &~~~~~~ +
%     B_{d-1}^\top (y_{d-1}^k \!+\! \rho(A_{d-1} x_{d-1}^{k+1} \!+\! B_{d-1} x_d^{k+1} \!-\! c_{d-1})),
%     % &\text{elsewhere.}
% \end{align*}
for $1< d <D$.
Substituting \eqref{eq:conv-proof:res} into these conditions and noting that $y^{k+1}_d = y^k_d + \rho r^{k+1}_d$ from the backward pass \eqref{eq:backward}, we have
\begin{subequations}
\begin{align} \label{thm:prf_KKTyk1}
    &0 \in \partial f_1 (x_1^{k+1}) + A_1^\top y_1^{k+1} - A_1^\top \delta_1^{k+1},
    \\
    &0 \in \partial f_D (x_D^{k+1}) + B_{D-1}^\top y_{D-1}^{k+1},
    \label{thm:prf_KKTyk2}
    \\
    &0 \in \partial f_d (x_d^{k+1}) + A_d^\top y_d^{k+1} - \changed{A_d^\top \delta_d^{k+1}} + B_{d-1}^\top y_{d-1}^{k+1},
    \label{thm:prf_KKTyk3}
\end{align}
\end{subequations}
for $1 < d < D$.
%
% \TN{In this part, the term $\rho (x_{d+1}^{k+1} - x_{d+1}^k)^\top B_d^\top A_d$ is essentially the residual $s$ defined and used in the stopping criterion in section~\ref{sec:stop-crit}.  We should reuse it here, also to shorten the math.}
%
Because of the convexity of $f_d$, $x_d^{k+1}$ minimizes the following functions:
$f_1(x) +  (A_1 y_1^{k+1} - A_1^\top \delta_1^{k+1})^\top x$,
$f_D(x) +  \big(y_{D-1}^{k+1}\big)^\top B_{D-1} x$, and
$f_d(x) + \big(A_d^\top y_d^{k+1} - A_d^\top \delta_d^{k+1} + B_{d-1}^\top y_{d-1}^{k+1}\big)^\top x$
for $1<d<D$.
% Thus, the following inequalities hold
Thus, it has
\begin{align*} 
    &f_d(x_d^{k+1})  + \Theta_d^{k+1} x_d^{k+1} \leq f_d(x_d^\star) + \Theta_d^{k+1} x_d^\star,
\end{align*}
where $\Theta_1^{k+1} = \big( A_1^\top y_1^{k+1} - A_1^\top \delta_1^{k+1} \big)^\top, \Theta_D^{k+1} = \big( y_{D-1}^{k+1} \big)^\top B_{D-1}$, and 
$\Theta_d^{k+1} = (A_d^\top y_d^{k+1} - A_d^\top \delta_d^{k+1} + B_{d-1}^\top y_{d-1}^{k+1})^\top A_d$ for $1 < d < D$. 
Summing the above inequalities for all $d = 1,\dots,D$ and using \eqref{eq:conv-proof:J}, we have
\begin{equation}
    \label{eq:upper}
    J^{k+1} \leq  J^{\star} + g^{k+1},
\end{equation}
where $g^{k+1} = \sum_{d=1}^D (\Theta_d^{k+1})^\top (x_d^\star \!-\! x_d^{k+1})$ is rewritten as
\begin{align}
  g^{k+1} &=  (y_{D-1}^{k+1})^\top B_{D-1} (x_D^\star \!-\! x_D^{k+1}) \!+\! (y_1^{k+1})^\top A_1 (x_1^\star \!-\! x_1^{k+1}) \nonumber \\
  &\quad + \sum_{d=1}^{D-1} (\delta_d^{k+1})^\top A_d (x_d^{k+1} \!-\! x_d^\star) \nonumber \\ 
  &\quad + \sum_{d=2}^{D-1} (A_d^\top y_d^{k+1} + B_{d-1}^\top y_{d-1}^{k+1})^\top (x_d^\star - x_d^{k+1})\text. \label{eq:g}
\end{align}
By the KKT condition~\eqref{thm:prf_KKT4} and by \eqref{eq:conv-proof:res}, we can derive
\begin{equation}
  \label{eq:conv-proof:res-alt}
  A_d (x_d^{k+1} - x_d^\star) +  \tilde{x}_{d+1}^{k+1} =  r_d^{k+1} \text.
\end{equation}
Left-multiplying both sides by $(y_d^{k+1})^\top$ then summing the equations for all $d$ from $1$ to $D-2$, noting that
$-(y_{D-1}^{k+1})^\top \tilde{x}_D^{k+1} = (y_{D-1}^{k+1})^\top (-r_{D-1}^{k+1} - A_{D-1} (x_{D-1}^\star - x_{D-1}^{k+1}))$
from \eqref{eq:conv-proof:res-alt}, we can rewrite \eqref{eq:g} as
\begin{multline}
  g^{k+1} = \!\sum_{d=1}^{D-1} \! \Big( \!(\delta_d^{k+1})^\top A_d (x_d^{k+1} - x_d^\star) - (y_d^{k+1})^\top r_d^{k+1} \!\Big) \text.
  \label{eq:conv-proof:g-final}
\end{multline}
Since $(x^\star, y^\star)$ is a saddle point for the augmented Lagrangian $L$ at any $\rho$ (see \cite[Sec~5.4]{boyd2004convex}), from \eqref{def:Lagrangian} and \eqref{eq:conv-proof:res}, we have
\(
    J^\star = L(x^\star, y^\star)\Big|_{\rho = 0}  
    \leq  
    L(x^{k+1}, y^\star)\Big|_{\rho = 0} 
    = J^{k+1} + \sum_{d=1}^{D-1} \big(y_d^{\star} \big)^\top r_d^{k+1}
\).
% \begin{align*} 
%     J^\star = L(x^\star, y^\star)\Big|_{\rho = 0}  
%     &\leq  
%     L(x^{k+1}, y^\star)\Big|_{\rho = 0} 
%     \nonumber \\
%     &= J^{k+1} \!+\! \sum_{d=1}^{D-1} \big(y_d^{\star} \big)^\top r_d^{k+1}\text.
% \end{align*}
%
Then, combining with \eqref{eq:upper}, we obtain
\begin{align}
    -\sum_{i=0}^{D-1} \big(y_d^{\star} \big)^\top r_d^{k+1} \leq  J^{k+1} - J^\star \leq g^{k+1}.
    \label{eq:conv-proof:bounds}
\end{align}
% From \eqref{eq:conv-proof:bounds}, from $J^{k+1} \rightarrow J^\star$ if $r_d^{k+1} \rightarrow 0$ and $B_d(x_{d+1}^{k+1} - x_{d+1}^k) \rightarrow 0$ no matter what $x_d^\star$ and $y_d^\star$ are.
This implies $0 \leq 2 \big( g^{k+1} + \sum_{d=1}^{D-1} y_d^{\star\top} r_d^{k+1} \big)$, which from \eqref{eq:conv-proof:g-final}
\begin{multline}
0 \leq \sum_{d=1}^{D-1} \!\!\Big( 2 (\delta_d^{k+1})^\top A_d (x_d^{k+1} \!-\! x_d^\star) \!-\! 2 (\tilde{y}_d^{k+1} )^\top r_d^{k+1} \Big)\text.
  \label{eq:gyr}
\end{multline}
We will rewrite the right hand side (RHS) of \eqref{eq:gyr} in terms of $V^k$ and $V^{k+1}$.
First, using \eqref{eq:conv-proof:res-alt}, the first term in the summand can be expressed as
\begin{align*}
	&2 (\delta_d^{k+1})^\top A_d   (x_d^{k+1} - x_d^\star) =  2 (\delta_d^{k+1})^\top (r_d^{k+1} - \tilde{x}_{d+1}^{k+1}) \\
	% &=  2 (\delta_d^{k+1})^\top r_d^{k+1} - 2 (\delta_d^{k+1})^\top \tilde{x}_{d+1}^{k+1} \\
	&=  2 (\delta_d^{k+1})^\top r_d^{k+1}  
    - 2 \frac{1}{\rho} \Vert \delta_d^{k+1} \Vert_2^2 + 2 \rho (\tilde{x}_{d+1}^k)^\top  \tilde{x}_{d+1}^{k+1} \\
	% ={} & \rho \Big( 2 (x_{d+1}^{k+1} - x_{d+1}^k)^\top B_d^{\top} r_d^{k+1} - \Vert B_d (x_{d+1}^{k+1} - x_{d+1}^k) \Vert_2^2 \\
    % &\quad + \Vert B_d(x_{d+1}^k - x_{d+1}^\star) \Vert_2^2 - \Vert B_d (x_{d+1}^{k+1} - x_{d+1}^\star) \Vert_2^2 \Big) \\
	&= \!-\! \frac{1}{\rho}\Vert \rho r_d^{k+1} \!-\! \delta_d^{k+1} \Vert_2^2 \!+\! \rho \Vert r_d^{k+1}\Vert_2^2  + \rho \Vert \tilde{x}_{d+1}^k \Vert_2^2 - \rho \Vert \tilde{x}_{d+1}^{k+1} \Vert_2^2\text.
\end{align*}
Using $y_d^{k+1} = y_d^k + \rho r_d^{k+1}$, the second term in the summand of \eqref{eq:gyr} can be rewritten as:
\(	2(\tilde{y}_d^{k+1})^\top r_d^{k+1} 
    = 2(\tilde{y}_d^k)^\top r_d^{k+1} + 2\rho \Vert r_d^{k+1} \Vert_2^2
	% ={} &\frac{2}{\rho}(y_d^k-y_d^{\star})^\top (y_d^{k+1} - y_d^k) + 2 \rho \Vert r_d^{k+1} \Vert_2^2 \\
	% ={} & -\frac{2}{\rho}(y_d^k-y_d^{\star})^\top (y_d^{\star} - y_d^{k+1}) - \frac{2}{\rho}\Vert y_d^k - y_d^{\star}\Vert_2^2 + 2\rho \Vert r_d^{k+1} \Vert_2^2 \\
    = \frac{1}{\rho} \Vert \tilde{y}_d^{k+1}\Vert_2^2 - \frac{1}{\rho}\Vert \tilde{y}_d^k \Vert_2^2 + \rho \Vert r_d^{k+1} \Vert_2^2 \text.
\)
Using these expressions and \eqref{eq:conv-proof:Vk}, the summand in \eqref{eq:gyr} is rewritten as
\begin{multline}
	% 2\rho(x_{d+1}^{k+1} - x_{d+1}^k)^\top B_d^\top A_d (x_d^{k+1} \!-\! x_d^\star) 
    2 \big(\delta_d^{k+1}\big)^\top A_d (x_d^{k+1} \!-\! x_d^\star)
    - 2(\tilde{y}_d^{k+1})^\top r_d^{k+1} \\
	= V^k_d - V^{k+1}_d - \frac{1}{\rho} \Vert \rho r_d^{k+1} - \delta_d^{k+1}\Vert_2^2\text.
    \label{eq:conv-proof:prf_expr}
\end{multline}
% where $V_d^k = \frac{1}{\rho}\Vert y_d^k - y_d^{\star}\Vert_2^2 + \rho\Vert B_d(x_{d+1}^k - x_{d+1}^\star) \Vert_2^2$. 
Summing \eqref{eq:conv-proof:prf_expr} for all $d$ from $1$ to $D-1$ and using \eqref{eq:conv-proof:Vk} and \eqref{eq:gyr}, we have \eqref{eq:conv-proof:prf_increment}.

\changed{According to \eqref{thm:prf_KKTyk2}, $x_D^{k+1}$ and $x_D^k$ are minimizers of
$f_D(x) + (y_{D-1}^{k+1})^\top B_{D-1} x$ and $f_D(x) + (y_{D-1}^k)^\top B_{D-1} x$, respectively.
Thus,
$f_D(x_D^{k+1}) - f_D(x_D^k) + \frac{1}{\rho} (y_{D-1}^{k+1})^\top \delta_{D-1}^{k+1} \leq 0$, and
$f_D(x_D^k)- f_D(x_D^{k+1}) - \frac{1}{\rho} (y_{D-1}^k)^\top  \delta_{D-1}^{k+1} \leq 0$.
Adding the inequalities leads to $(r_{D-1}^{k+1})^\top  \delta_{D-1}^{k+1} \leq 0$.
This implies
\begin{math}
\| r_{D-1}^{k+1} - \frac{1}{\rho} \delta_{D-1}^{k+1} \|_2^2
\geq
\Vert r_{D-1}^{k+1} \Vert_2^2
+ \Vert \frac{1}{\rho} \delta_{D-1}^{k+1} \Vert_2^2\text.
\end{math}
Then, from \eqref{eq:conv-proof:prf_increment}, 
% we have
\begin{multline}
  \label{eq:conv-proof:V-increment-separate}
  V^k \geq  V^{k+1} + \frac{1}{\rho} \sum_{d=1}^{D-2} \Vert \rho r_d^{k+1} - \delta_d^{k+1}\Vert_2^2 \\
  + \rho \Vert r_{D-1}^{k+1} \Vert_2^2 + \frac{1}{\rho} \Vert \delta_{D-1}^{k+1} \Vert_2^2\text.
\end{multline}
Since $V^k$ is non-increasing and non-negative, it converges and the non-negative terms in \eqref{eq:conv-proof:V-increment-separate} vanish. Hence
\begin{align}
  &r_{D-1}^{k+1} \to 0, ~ \delta_{D-1}^{k+1} \to 0, \text{\,and\,\,}r_d^{k+1}- \frac{1}{\rho} \delta_d^{k+1}\to 0,  \label{eq:conv-proof:last-layer-limits}
\end{align}
for $d=1,\dots,D-2$. Because $B_{D-1}$ has full column rank, \eqref{eq:conv-proof:last-layer-limits} gives $x_D^{k+1}-x_D^k\to 0$.
Repeating the same monotonicity argument recursively for $d=D-2,\dots,1$ gives
\begin{align}
  r_d^{k+1}\to 0,\quad x_{d+1}^{k+1}-x_{d+1}^k\to 0,\quad d=1,\dots,D-1.
  \label{eq:conv-proof:residual-increment-limits}
\end{align}}
% We emphasize that \eqref{eq:conv-proof:residual-increment-limits} alone does not imply convergence of the full sequence.
% We therefore complete the proof by a cluster-point argument.
% }
\changed{From  \eqref{eq:conv-proof:Vk}, %the sequence $\{y_d^k\}$ is bounded for all $d\in\bbN_{D-1}$, and $\{B_d(x_{d+1}^k-x_{d+1}^\star)\}$ is 
the sequences $\{y_d^k\}$ and $\{B_d(x_{d+1}^k-x_{d+1}^\star)\}$ are
bounded for all $d\in\bbN_{D-1}$.
Since each $B_d$ has full column rank, $\{x_d^k\}$ is bounded for all $d=2,\dots,D$.
Moreover, $r_1^k=A_1x_1^k+B_1x_2^k-c_1\to0$ and $\{x_2^k\}$ is bounded, so $\{A_1x_1^k\}$ is bounded.
The full-column-rank condition on $A_1$ then implies that $\{x_1^k\}$ is bounded.
Thus the full primal-dual sequence is bounded and has at least one cluster point according to the Bolzano-Weierstrass theorem.}

\changed{Let $([ \bar x_d]_{d\in\bbN_D},[\bar y_d]_{d\in\bbN_{D-1}})$ be any cluster point of the sequence.
Passing to a convergent subsequence $\{r_d^k\}$ in \eqref{eq:conv-proof:residual-increment-limits} gives $A_d\bar x_d+B_d\bar x_{d+1}=c_d$ for all $d\in\bbN_{D-1}$.
In addition, the perturbation terms $B_d(x_{d+1}^{k+1}-x_{d+1}^k)$ in
\eqref{thm:prf_KKTyk1} and \eqref{thm:prf_KKTyk3} vanish.
Using the closedness of the graph of the subdifferential for closed proper convex functions \cite[Theorem 24.4]{rockafellar1997convex}, the limit of \eqref{thm:prf_KKTyk1}-\eqref{thm:prf_KKTyk3} satisfies \eqref{thm:prf_KKT1}-\eqref{thm:prf_KKT3}.
Therefore every cluster point satisfies all KKT conditions \eqref{thm:prf_KKT1}-\eqref{thm:prf_KKT4}, and hence belongs to $\calO^\star$.
If $\calO^\star$ is a singleton, the bounded sequence has only one cluster point;
consequently, the whole sequence converges to the unique KKT point in $\calO^\star$.}
% \fi

\ifdefined\SubmittedPaper
\else
\section{Proof of Theorem \ref{thm:convergence_strcon}}
\label{apdx:thm2_prf}

\changed{Using the same definitions in \eqref{eq:conv-proof:res} to \eqref{eq:conv-proof:Vk} in Appendix~\ref{apdx:thm1_prf}, we will show that
\begin{multline}
    \changed{V^k - V^{k+1}
    \geq  \frac{1}{\rho} \sum_{d=1}^{D-1} \Vert \rho r_d^{k+1} - \delta_d^{k+1}\Vert_2^2} \\
    \changed{+ 2 \sum_{d=1}^{D} \alpha_d \Vert x_d^{k+1} - x_d^\star  \Vert_2^2\text.}
    \label{eq:prf_strform}
\end{multline} 
If \eqref{eq:prf_strform} holds, then $x_d^{k+1} - x_d^\star \rightarrow 0$ for $d=1,\dots,D$ and
$r_d^{k+1} - B_d (x_{d+1}^{k+1} - x_{d+1}^k) \rightarrow 0$ for $d=1,\dots,D-1$ as $k \to \infty$.}

To prove \eqref{eq:prf_strform}, let us consider the following inequality obtained by the strong convexity of $f_d$ in Scenario \ref{scn:str_convex}:
\begin{multline}
    J^{k+1} - J^\star = \sum_{d=1}^{D} \left( f_d(x_d^{k+1}) - f_d(x_d^\star) \right) \\
    \geq  \sum_{d=1}^{D} \partial f_d^\top(x_d^\star) (x_d^{k+1} - x_d^\star) 
      + \sum_{\changed{d=1}}^{D} \alpha_d \Vert x_d^{k+1} - x_d^\star  \Vert_2^2\text.
      \label{eq:prf_convex}
\end{multline}
Recalling $g^{k+1}$ defined in \eqref{eq:g} then using %\eqref{eq:gyr} and
\eqref{eq:conv-proof:prf_expr}, we have % in Appendix~\ref{apdx:thm1_prf},
\begin{multline}
  \changed{g^{k+1} + \sum_{d=1}^{D-1} y_d^{\star\top} r_d^{k+1} = \frac{1}{2} \left( V^k - V^{k+1} \right)}\\
  - \frac{1}{2\rho} \sum_{d=1}^{D-1} \Vert \rho r_d^{k+1} \!-\! \delta_d^{k+1})\Vert_2^2 \text.
  \label{eq:prf_gsum}
\end{multline}
On the other hand, according to \eqref{eq:conv-proof:bounds}, we have
\begin{math}
  % \label{thm:prf_gr}
  g^{k+1}  \geq J^{k+1} - J^\star\text.
\end{math}
Applying \eqref{eq:prf_convex} and \eqref{eq:prf_gsum} into this inequality leads to
% \begin{align}
%     g^{k+1}  + \sum_{d=1}^{D-1} y_d^{\star\top} r_d^{k+1} \geq  \sum_{d=1}^{D} \partial f_d^\top(x_d^\star) (x_d^{k+1} -x_d^\star) + \frac{\alpha}{2} 
%     \Vert x_d^{k+1} - x_d^\star  \Vert_2^2
%     +   \sum_{d=1}^{D-1} y_d^{\star\top} r_d^{k+1}
% \end{align}
% or
\begin{align}
  \frac{1}{2} (V^k - V^{k+1}) \geq & \sum_{d=1}^{D-1} y_d^{\star\top} r_d^{k+1} +  \sum_{d=1}^{D} \partial f_d^\top(x_d^\star) (x_d^{k+1} - x_d^\star) \nonumber \\
  &+ \frac{1}{2\rho} \sum_{d=1}^{D-1} \Vert \rho r_d^{k+1} \!-\! \delta_d^{k+1})\Vert_2^2 \nonumber \\
  &+ \sum_{\changed{d=1}}^{D} \alpha_d \Vert x_d^{k+1} - x_d^\star  \Vert_2^2\text.
  \label{eq:prf_Vk1-Vk}
\end{align}
From \eqref{eq:prf-KKTs}, the first two terms in the RHS of \eqref{eq:prf_Vk1-Vk} reduce to
\begin{align*}
  &
    \begin{multlined}[t]
      \sum_{d=1}^{D-1} \left( y_d^{\star\top} r_d^{k+1} + \partial f_d^\top(x_d^\star) (x_d^{k+1} - x_d^\star)  \right) \\
      + \partial f_D^\top(x_D^\star)(x_D^{k+1} - x_D^\star)  
    \end{multlined}
  \\
  ={}&
       \begin{multlined}[t]
         \sum_{d=1}^{D-1} \left( y_d^{\star\top} r_d^{k+1} - y_d^{\star \top} A_d (x_d^{k+1} - x_d^\star) \right) 
         - \sum_{d=1}^{D-1} y_d^{\star \top} \tilde{x}_{d+1}^{k+1} % B_d^\top(x_{d+1}^{k+1} - x_{d+1}^\star)
       \end{multlined}
  \\
  ={}&
       \begin{multlined}[t]
         \sum_{d=1}^{D-1} \left( y_d^{\star\top} (B_d x_{d+1}^{k+1} - c_d) +  y_d^{\star\top} A_d x_d^\star \right) 
         - \sum_{d=1}^{D-1} y_d^{\star \top}  \tilde{x}_{d+1}^{k+1} %% B_d^\top(x_{d+1}^{k+1} - x_{d+1}^\star)
       \end{multlined}
  \\
  ={}&
       \begin{multlined}[t]
         \sum_{d=1}^{D-1} \left( y_d^{\star\top} B_d x_{d+1}^{k+1} - y_d^{\star\top} B_d x_{d+1}^\star \right)
         - \sum_{d=1}^{D-1} y_d^{\star \top} \tilde{x}_{d+1}^{k+1} %B_d^\top(x_{d+1}^{k+1} - x_{d+1}^\star)
       \end{multlined}
  \\
  ={}& 0 \text.
\end{align*}
Therefore, from \eqref{eq:prf_Vk1-Vk}, \eqref{eq:prf_strform} is proved.
\changed{
By \cite[Theorem 2]{Ye2021}, under Slater’s condition and strong convexity of the objective functions,
% the optimization problem
\eqref{eq:prob-tree-opt} admits a unique minimizer in Scenario \ref{scn:str_convex} or Scenario \ref{ass:Lipschitz}.
Hence, $[x_d^\star]_{d\in \bbN_{D}}$ is the unique minimizer.
}
\fi

% \ifdefined\SubmittedPaper
% \section{Matrices in Theorem \ref{thm:convergence_rate} \label{apdx:thm3_prf}}
% \else
\section{Proof of Theorem \ref{thm:convergence_rate}
\label{apdx:thm3_prf}}
% \fi

\changed{
Let $s=\sum_{d=1}^{D-1}s_d$.  The matrix $\Xi = \Xi^\top \in\mathbb R^{4s\times4s}$ is defined as follows, where $(\ast)$ denotes the block
inferred by symmetry and 
% $0$ denotes a zero matrix of appropriate dimensions:
$0_{n\times m}$ denotes a zero matrix in $\bbR^{n\times m}$:
\begin{align*}
  &\Xi = \left[ \begin{array}{c|c:c:c} \Xi^{(1,1)} &0_{s \times s} &0_{s \times s} &(*) \\ \hline 0_{s \times s} &\Xi^{(2,2)} &0_{s \times s} &(*)
    \\ 0_{s \times s} &0_{s \times s} & \Xi^{(3,3)} &(*)
    \\ \Xi^{(4,1)} &\Xi^{(4,2)} & \Xi^{(4,3)} & \Xi^{(4,4)}\\ \end{array} \right]\text,
    \\
% \end{equation*}
% where
%\begin{subequations}
    % \begin{align*}
      &\Xi^{(1,1)} = 2 (1-t) \tridiag \Bigl( \left\{\tfrac{1}{l_{i+1}}B_i A_{i+1}^\top\right\}_{i=1,\dots, D-2}, \nonumber \\
      &\qquad\qquad \left\{\tfrac{1}{l_2}B_1 B_1^\top, \left\{\tfrac{1}{l_i}A_i A_i^\top \!+\! \tfrac{1}{l_{i+1}} B_i B_i^\top \right\}_{i=2,\dots, D-1} \right\}, \nonumber\\
      &\qquad\qquad \left\{ \changed{\tfrac{1}{l_{i+1}}A_{i+1} B_i^\top} \right\}_{i=1,\dots, D-2} \Bigr) \text, %\label{thm:prf10}
      \\
      &\Xi^{(2,2)} = \left[ \frac{2t\alpha_{d+1}}{\Vert B_d \Vert_2^2}   I_{s_d}  \right]_{d\in\bbN_{D-1}}^{\diag},\quad \Xi^{(3,3)} = \rho I_s\text, %\label{thm:prf11}
      \\
      &
      \Xi^{(4,1)} = 2(1-t) \tridiag \Bigl( 
      \{0_{s_i\times s_{i+1}}\}_{i=1,\dots,D-2}, \nonumber\\
      &\qquad\qquad\qquad\quad \Bigl\{\changed{0_{s_1\times s_1}}, \Bigl\{\changed{-\tfrac{1}{l_{i+1}} A_{i+1} A_{i+1}^\top} \Bigr\}_{i=1,\dots,D-2} \Bigr\}, \nonumber\\
      &\qquad\qquad\qquad\quad \Bigl\{-\tfrac{1}{l_{i+1}} A_{i+1} B_i^\top \Bigr\}_{i=1,\dots,D-2}
      \Bigr)\text,
    \\
    &\Xi^{(4,2)} = -\frac{\sigma}{\rho} I_s,\quad \Xi^{(4,3)} = -I_s\text,
    \\
    &\Xi^{(4,4)} = \left(\frac{1}{\rho} \!+\! \frac{\sigma}{\rho^2}\right) I_s \!+\!
      \begin{bmatrix}
        \changed{0_{s_1\times s_1}}\!\!\! \\ & \left[ \frac{2(1-t)}{l_d}  A_d A_d^\top \right]_{d = 2,\dots, D-1}^{\diag}
      \end{bmatrix}\text.
      % \label{thm:prf12}
\end{align*}
}
\vspace{-1em}
% \ifdefined\SubmittedPaper
% \else
\begin{lemma}[\cite{cao2019dynamic,deng2016global}]
  \label{lem:strong_convexity}
  For  any strongly convex function $f$ with moduli $\alpha > 0$, the following inequality holds for all $z, z^\prime \in \mathrm{Dom}(f)$,    
  $ \alpha \Vert z - z^\prime \Vert_2^2 \leq (\nabla f(z) - \nabla f(z^\prime))^\top 
  (z - z^\prime).$
\end{lemma}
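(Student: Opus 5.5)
The statement to prove is Lemma~\ref{lem:strong_convexity}: strong monotonicity of the gradient of a strongly convex function. I will derive it directly from the definition in Scenario~\ref{scn:str_convex}. Note that the paper's convention carries no factor $\tfrac12$, i.e.\ $f(z) - f(z') \geq \nabla f(z')^\top (z-z') + \alpha\Vert z-z'\Vert_2^2$. The plan is to write this inequality twice, once at the base point $z'$ and once with the roles of $z$ and $z'$ swapped, and add the two.

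\textbf{Step 1.} Fix $z,z'\in\mathrm{Dom}(f)$. Applying the definition with base point $z'$ gives
\begin{align*}
f(z) - f(z') \geq \nabla f(z')^\top (z-z') + \alpha \Vert z-z'\Vert_2^2 .
\end{align*}

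\textbf{Step 2.} Applying it with base point $z$ gives
\begin{align*}
f(z') - f(z) \geq \nabla f(z)^\top (z'-z) + \alpha \Vert z-z'\Vert_2^2 .
\end{align*}

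\textbf{Step 3.} Adding the two inequalities, the function values cancel on the left:
\begin{align*}
0 \geq -(\nabla f(z) - \nabla f(z'))^\top (z-z') + 2\alpha \Vert z-z'\Vert_2^2 .
\end{align*}
Rearranging yields $(\nabla f(z)-\nabla f(z'))^\top(z-z') \geq 2\alpha\Vert z-z'\Vert_2^2$. Since $\alpha>0$, this is at least $\alpha\Vert z-z'\Vert_2^2$, which is the claim; the paper's stated constant is in fact weaker than what the argument gives.

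\textbf{Possible obstacle.} There is little difficulty, since differentiability is assumed. The only care needed is the modulus convention. If the standard $\tfrac{\alpha}{2}$ convention were used instead, the same summation would give exactly $\alpha\Vert z-z'\Vert_2^2$, so the lemma holds under either convention. If one wanted a version without differentiability, I would replace $\nabla f$ by arbitrary subgradients $g\in\partial f(z)$ and $g'\in\partial f(z')$ and repeat the same two-line argument. That variant is not needed here, because Scenario~\ref{ass:Lipschitz} assumes differentiability.
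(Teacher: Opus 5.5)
Your proof is correct. Adding the first-order strong-convexity inequality at base point $z'$ to the one at base point $z$ cancels the function values. Under the paper's convention (no factor $\tfrac12$) this gives $(\nabla f(z)-\nabla f(z'))^\top(z-z')\geq 2\alpha\Vert z-z'\Vert_2^2$, which implies the stated weaker bound; under the standard $\tfrac{\alpha}{2}$ convention it gives exactly $\alpha\Vert z-z'\Vert_2^2$. The paper does not prove this lemma and only cites the literature for it, so your summation argument, which is the standard derivation of this fact, makes the step self-contained.
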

\begin{lemma}[\cite{cao2019dynamic,deng2016global}]\label{lem:convex_lip}
    For any differentiable convex function $f$ having Lipschitz continuous gradient with Lipschitz constant $L$, the following inequality holds for all $z, z^\prime \in \mathrm{Dom}(f)$,
        $\Vert \nabla f(z) - \nabla f(z^\prime) \Vert_2^2 \leq L (z - z^\prime)^\top (\nabla f(z) - \nabla f(z^\prime))$.
\end{lemma}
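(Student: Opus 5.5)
The plan is to prove the co-coercivity inequality by the standard descent-lemma argument, applied to a tilted version of $f$ and then symmetrized. Throughout I take $\mathrm{Dom}(f)=\RR^n$, consistent with the paper's standing convention that the cost functions are finite-valued. This matters because the argument evaluates $f$ at a gradient step, which must stay in the domain.

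First, recall the descent lemma: for $f$ differentiable with $L$-Lipschitz gradient, for all $u,v$,
\begin{equation*}
f(v) \leq f(u) + \nabla f(u)^\top (v-u) + \tfrac{L}{2}\Vert v-u\Vert_2^2 .
\end{equation*}
It follows by writing $f(v)-f(u)=\int_0^1 \nabla f(u+\tau(v-u))^\top(v-u)\,d\tau$, subtracting $\nabla f(u)^\top(v-u)$, and bounding the remaining integrand by Cauchy--Schwarz and the Lipschitz condition.

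Next, fix $z'$ and define the tilted function $\phi(x) = f(x) - \nabla f(z')^\top x$. It is convex, has the same $L$-Lipschitz gradient $\nabla\phi(x)=\nabla f(x)-\nabla f(z')$, and satisfies $\nabla\phi(z')=0$. By convexity, $z'$ is therefore a global minimizer of $\phi$. Apply the descent lemma to $\phi$ at $u=z$, $v=z-\frac{1}{L}\nabla\phi(z)$, and use minimality:
\begin{equation*}
\phi(z') \leq \phi\big(z-\tfrac{1}{L}\nabla\phi(z)\big) \leq \phi(z) - \tfrac{1}{2L}\Vert \nabla\phi(z)\Vert_2^2 .
\end{equation*}
Unpacking $\phi$, this gives
\begin{equation*}
f(z') \leq f(z) + \nabla f(z')^\top(z'-z) - \tfrac{1}{2L}\Vert \nabla f(z)-\nabla f(z')\Vert_2^2 .
\end{equation*}

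Finally, exchange the roles of $z$ and $z'$ to get the symmetric inequality, and add the two. The function values cancel, leaving
\begin{equation*}
\tfrac{1}{L}\Vert \nabla f(z)-\nabla f(z')\Vert_2^2 \leq (\nabla f(z)-\nabla f(z'))^\top(z-z') .
\end{equation*}
Multiplying by $L$ gives exactly the claim of Lemma~\ref{lem:convex_lip}.

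The only delicate point is the step where $z'$ is used as a global minimizer of $\phi$ evaluated at the trial point $z-\frac{1}{L}\nabla\phi(z)$. This needs convexity (so that a stationary point is a minimizer) together with the full-domain assumption (so that the trial point is admissible). Everything else is routine algebra.
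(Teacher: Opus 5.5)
Your proof is correct. It is the standard derivation of co-coercivity (the Baillon--Haddad inequality): apply the descent lemma to the tilted function $\phi$, use $z'$ as its global minimizer, then symmetrize. The paper gives no proof of this lemma and simply imports it from the cited literature, so there is no alternative route in the paper to compare against.

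Your domain caveat is the right one, because the trial point $z-\frac{1}{L}\nabla\phi(z)$ must be admissible. It is covered by the paper's standing assumption that each $f_{d,i}$ is finite-valued on all of $\mathbb{R}^{n_{d,i}}$. The only case you leave unaddressed is $L=0$, and it is trivial: $\nabla f$ is then constant and both sides vanish.
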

% Recall \eqref{eq:forward}, $x_d^{k+1}$ satisfies the generalized first-order condition \cite{boyd2003subgradient}:
% \begin{align}
%     \label{thm:prf_gradxk}
%     \begin{cases}
%         \nabla f_0 (x_0^{k+1}) + A_0^\top (y_0^{k+1} - \rho B_0 (x_1^{k+1} - x_1^k)) = 0,
%         \\
%         \nabla f_D (x_D^{k+1}) + B_{D-1}^\top y_{D-1}^{k+1} = 0,
%         \\
%         \nabla f_d (x_d^{k+1}) + A_d^\top (y_d^{k+1} - \rho B_d (x_{d+1}^{k+1} - x_{d+1}^k)) + B_{d-1}^\top y_{d-1}^{k+1} = 0, ~&\forall~0<d<D.
%     \end{cases}
% \end{align}
%
\changed{Stack the four $s$-dimensional block vectors as
\[
\begin{aligned}
\bm\eta^k=\operatorname{col}\big(&
[\tilde y_d^k]_{d=1}^{D-1},[\tilde x_{d+1}^k]_{d=1}^{D-1}, [r_d^k]_{d=1}^{D-1},[\delta_d^k]_{d=1}^{D-1}\big),
\end{aligned}
\]
}
where $([ x_d^{\star}]_{d\in \bbN_{D}}, [ y_d^{\star}]_{d\in \bbN_{D-1}})$ is a KKT solution of \eqref{thm:prf_KKT1}-\eqref{thm:prf_KKT4},
and the residual $r_d^k$ is defined in \eqref{eq:conv-proof:res}.
% and $\bm{\eta}^k$ collects all the variables.

Let $\nabla f_d^k=\nabla f_d(x_d^k)$ and $\nabla f_d^\star=\nabla f_d(x_d^\star)$.
\changed{Lemmas~\ref{lem:convex_lip} and~\ref{lem:strong_convexity} result in the following inequalities}
\begin{align}
  \Vert \nabla f_d^{k+1} \!-\! \nabla f_d^\star \Vert_2^2 
  &\leq l_d(x_d^{k+1} \!-\! x_d^\star)^\top 
  (\nabla f_d^{k+1} \!-\! \nabla f_d^\star)\text,
  \label{eq:prf_rate1}
  \\
  \alpha_d \Vert x_d^{k+1} \!-\! x_d^\star \Vert_2^2 &\leq (\nabla f_d^{k+1} \!-\! \nabla f_d^\star)^\top \!
  (x_d^{k+1} \!-\! x_d^\star)\text.
  \label{eq:prf_rate2}
\end{align}
\changed{where \eqref{eq:prf_rate1} and \eqref{eq:prf_rate2} are for $2\leq d\leq D$ and for $1\leq d\leq D$.}
From \eqref{thm:prf_KKT1}-\eqref{thm:prf_KKT4} and \eqref{thm:prf_KKTyk1}-\eqref{thm:prf_KKTyk3},
% and using the definitions of $\tilde{x}_d^k$ and $\tilde{y}_d^k$,
we have
\begin{align}
    &\nabla f_1^{k+1} - \nabla f_1^\star = - \!A_1^\top \tilde{y}_1^{k+1} +  A_1^\top \delta_1^{k+1},
    \label{eq:prf_rate2-1}
    \\
    &\nabla f_D^{k+1} - \nabla f_D^\star = \!-\! B_{D-1}^\top \tilde{y}_{D-1}^{k+1},
    \label{thm:prf_rate3}
    \\
    &\nabla f_d^{k+1} - \nabla f_d^\star =  \!-\! B_{d-1}^\top  \tilde{y}_{d-1}^{k+1} 
     -\! A_d^\top \tilde{y}_d^{k+1} \!+\!  A_d^\top \delta_d^{k+1}\text,
    \label{eq:prf_rate4}
\end{align}
for $2\leq d\leq D-1$.
Then we can rewrite \eqref{eq:prf_rate1} and \eqref{eq:prf_rate2} with the help of \eqref{eq:prf_rate2-1}-\eqref{eq:prf_rate4} as follows.
At $d = D$,
\begin{align}
  0 &\geq \frac{1}{l_D}\Vert B_{D-1}^\top \tilde{y}_{D-1}^{k+1} \Vert_2^2 
      + (\tilde{x}_D^{k+1} )^\top \tilde{y}_{D-1}^{k+1},
      \label{thm:prf_rate5} \\
  0 &\geq \alpha_D \Vert x_D^{k+1} - x_D^\star \Vert_2^2 + (\tilde{x}_D^{k+1} )^\top \tilde{y}_{D-1}^{k+1} \nonumber \\
    &\geq \frac{\alpha_D}{\Vert B_{D-1} \Vert_2^2} \Vert \tilde{x}_D^{k+1} \Vert_2^2 + (\tilde{x}_D^{k+1} )^\top \tilde{y}_{D-1}^{k+1}\text.
      \label{thm:prf_rate6}
\end{align} 
At $d = 2,\dots, D-1$, noting that $A_d(x_d^{k+1} - x_d^\star) = r_d^{k+1} - B_d(x_{d+1}^{k+1} - x_{d+1}^\star) = r_d^{k+1} - \tilde{x}_{d+1}^{k+1}$, denote $\Gamma_d^{k+1} = A_d^\top \tilde{y}_d^{k+1} + B_{d-1}^\top \tilde{y}_{d-1}^{k+1} - A_d^\top \delta_d^{k+1}$, we have
\begin{align}
  0 &\geq
      \begin{multlined}[t]
        \frac{1}{l_d} \Vert \Gamma_d^{k+1} \Vert_2^2
        + (x_d^{k+1} - x_d^\star)^\top \Gamma_d^{k+1}
      \end{multlined} \nonumber\\
    &=
      \begin{multlined}[t]
        \frac{1}{l_d} \Vert \Gamma_d^{k+1} \Vert_2^2 + (\tilde{x}_d^{k+1})^\top \tilde{y}_{d-1}^{k+1} + \changed{\psi_d^{k+1}}
        % \\
        % + (r_d^{k+1} - \tilde{x}_{d+1}^{k+1})^\top \tilde{y}_d^{k+1} - (x_d^{k+1} - x_d^\star)^\top\delta_{d+1}^{k+1}
      \end{multlined} \nonumber\\
    &\!\underset{\eqref{eq:conv-proof:prf_expr}}{=}
      % \begin{multlined}[t]
        \frac{1}{l_d} \Vert \Gamma_d^{k+1} \Vert_2^2 \!+\! (\tilde{x}_d^{k+1})^\top \tilde{y}_{d-1}^{k+1} \!-\! (\tilde{x}_{d+1}^{k+1})^\top \tilde{y}_d^{k+1} \notag \\
        &\qquad \qquad - \frac{1}{2} (V_d^k \!-\! V_d^{k+1} \!-\! \rho \Vert r_d^{k+1} \!-\! \frac{1}{\rho} \delta_d^{k+1} \Vert_2^2)\text.
      % \end{multlined} 
      \label{eq:prf_rate7}\\
  0 &\geq
      \begin{multlined}[t]
        \alpha_d \Vert x_d^{k+1} - x_d^\star \Vert_2^2 
        + (x_d^{k+1} - x_d^\star)^\top \Gamma_d^{k+1}
      \end{multlined} \nonumber\\
    &\geq
      \begin{multlined}[t]
        \frac{\alpha_d}{\Vert B_{d-1} \Vert_2^2} \Vert \tilde{x}_d^{k+1} \Vert_2^2 
        + (\tilde{x}_d^{k+1})^\top \tilde{y}_{d-1}^{k+1} + \changed{\psi_d^{k+1}} 
        % \\
        % + (r_d^{k+1} - \tilde{x}_{d+1}^{k+1})^\top \tilde{y}_d^{k+1} - (x_d^{k+1} \!-\! x_d^\star)^\top\delta_d^{k+1}
      \end{multlined}
      \nonumber\\
    &\underset{\eqref{eq:conv-proof:prf_expr}}{=} \frac{\alpha_d}{\Vert B_{d-1} \Vert_2^2} \Vert \tilde{x}_d^{k+1} \Vert_2^2
      + (\tilde{x}_d^{k+1})^\top \tilde{y}_{d-1}^{k+1} - (\tilde{x}_{d+1}^{k+1})^\top \tilde{y}_d^{k+1}
      \nonumber\\
    &\phantom{=} \qquad - \frac{1}{2} (V_d^k - V_d^{k+1} - \rho \Vert r_d^{k+1} - \frac{1}{\rho} \delta_d^{k+1} \Vert_2^2)\text, \label{eq:prf_rate8}
\end{align}
where \changed{$\psi_d^{k+1} = (r_d^{k+1} - \tilde{x}_{d+1}^{k+1})^\top \tilde{y}_d^{k+1} - (x_d^{k+1} \!-\! x_d^\star)^\top A_d^\top\delta_d^{k+1}$.}
% $V_d^k$ is defined in \eqref{eq:conv-proof:Vk}.
\changed{At $d=1$, the strong-convexity term gives}
\changed{%
\begin{align}
0&\geq(x_1^{k+1}-x_1^\star)^\top A_1^\top
(\tilde y_1^{k+1}-\delta_1^{k+1})
\notag\\
&= \!- (\tilde x_2^{k+1})^\top\tilde y_1^{k+1}
\!-\! \frac12(V_1^k \!-\! V_1^{k+1}) \!+\! \frac{\rho}{2}\left\|r_1^{k+1} \!-\! \rho^{-1}\delta_1^{k+1}\right\|_2^2,
\label{eq:rate_prf0}
\end{align}
}
Multiplying \eqref{thm:prf_rate6} and \eqref{eq:prf_rate8} by $t\in (0,1)$, and multiplying \eqref{thm:prf_rate5} and \eqref{eq:prf_rate7} by $(1-t)$, 
then adding the results and \eqref{eq:rate_prf0} together, 
we obtain the following inequality
\begin{align*}
  V^k \!-\! V^{k+1} 
  &\geq
    % \begin{multlined}[t]
      \frac{2(1-t)}{l_D}\Vert B_{D-1}^\top \tilde{y}_{D-1}^{k+1} \Vert_2^2 \\
      &+ \!\! \sum_{d=2}^{D-1}\frac{2(1-t)}{l_d} \Vert A_d^\top \tilde{y}_d^{k+1} \!+\! B_{d-1}^\top \tilde{y}_{d-1}^{k+1} \!-\! A_d^\top \delta_d^{k+1} \Vert_2^2 \\
      &+ \!\! \sum_{d=1}^{D-1} \changed{\Big(}\frac{2t \alpha_{d+1}}{\Vert B_d \Vert_2^2} \Vert \tilde{x}_{d+1}^{k+1} \Vert_2^2 + \rho \Vert r_d^{k+1} \!-\! \frac{1}{\rho} \delta_d^{k+1}  \Vert_2^2 \changed{\Big)}\text.
    % \end{multlined}
    % \label{eq:prf_rate9}
\end{align*}
Adding $\sigma\sum_{d=1}^{D-1}(\Vert \tilde{x}_{d+1}^k \Vert_2^2 - \Vert \tilde{x}_{d+1}^{k+1} \Vert_2^2)$ to both sides of the inequality %  \eqref{thm:prf_rate9}
and noting %that
$\Vert \tilde{x}_{d+1}^k \Vert_2^2  = \Vert \tilde{x}_{d+1}^{k+1} - \frac{1}{\rho} \delta_d^{k+1} \Vert_2^2$,
\changed{
we have
\begin{align}
  &\hat V^k - \hat V^{k+1} \geq (\bm\eta^{k+1})^\top \Xi  \bm\eta^{k+1}\text.
    \label{eq:prf_rate12}
\end{align}
Since $(\bm\eta^{k+1})^\top\Theta\bm\eta^{k+1}=\hat V^{k+1}$, \eqref{thm:sdp} and \eqref{eq:prf_rate12} imply \eqref{eq:lya_ctrc}.%
\ifdefined\SubmittedPaper
\else
{} The block-wise identification of $\Xi$ from the inequality above is detailed in Appendix~\ref{prop:proof_rate}.
\fi
}
\changed{%
Lemma~\ref{lem:strong_convexity}, \eqref{thm:prf_KKTyk1}--\eqref{thm:prf_KKTyk3}, and Cauchy--Schwarz inequality give
\(\alpha_1\|x_1^{k+1}-x_1^\star\|_2
\leq\|A_1^\top(\delta_1^{k+1}-\tilde y_1^{k+1})\|_2\).
Using $\delta_1^{k+1}=\rho(\tilde x_2^{k+1}-\tilde x_2^k)$ and Cauchy--Schwarz inequality gives
\begin{align}
   \|x_1^{k+1}-x_1^\star\|_2^2
   &\leq\frac{2\|A_1\|_2^2}{\alpha_1^2}\Big[\|\tilde y_1^{k+1}\|_2^2\notag\\*
   &\qquad\qquad+2\rho^2\big(\|\tilde x_2^{k+1}\|_2^2+\|\tilde x_2^k\|_2^2\big)\Big].
    \label{thm:x1_rate}
\end{align}
From the first inequality of \eqref{thm:prf_rate6}, we obtain
\begin{align}
  \Vert x_D^{k+1} - x_D^\star \Vert_2^2 
  \leq \frac{1}{2\alpha_D} \left(\| \tilde{x}_D^{k+1} \|_2^2 + \| \tilde{y}_{D-1}^{k+1}\|_2^2\right).
    \label{thm:xD_rate}
\end{align}
For $2\leq d\leq D-1$, similarly using Cauchy--Schwarz gives
\begin{align}
  \|x_d^{k+1}-x_d^\star\|_2^2
  &\leq \frac{3}{\alpha_d^2}\Big[\|A_d\|_2^2\|\tilde y_d^{k+1}\|_2^2
  +\|B_{d-1}\|_2^2\|\tilde y_{d-1}^{k+1}\|_2^2 \notag\\
  &\quad+2\rho^2\|A_d\|_2^2
  \big(\|\tilde x_{d+1}^{k+1}\|_2^2+\|\tilde x_{d+1}^k\|_2^2\big)\Big].
  \label{thm:xd_rate}
\end{align}
Define $\beta_1=\frac{2\|A_1\|_2^2}{\alpha_1^2}\max\{1,2\rho^2\}$,
$\beta_D=\frac{1}{2\alpha_D}$, and, for $2\leq d\leq D-1$,
\(\beta_d=\frac{3}{\alpha_d^2}\max\{\|A_d\|_2^2,\|B_{d-1}\|_2^2,2\rho^2\|A_d\|_2^2\}\).
Let $\beta=\max_{1\leq d\leq D}\beta_d$ and $c_\rho=\max\{\rho,\rho^{-1}\}$.
Since $\sum_{d=1}^{D-1}(\|\tilde y_d^k\|_2^2+\|\tilde x_{d+1}^k\|_2^2)\leq c_\rho V^k$ and $V^k\leq\hat V^k$ for $\sigma>0$, summing \eqref{thm:x1_rate}--\eqref{thm:xd_rate} and using \eqref{eq:lya_ctrc} gives
\begin{align}
  \sum_{d=1}^{D}\|x_d^{k+1}-x_d^\star\|_2^2
  &\leq\beta\sum_{d=1}^{D-1}\Big(
  \|\tilde x_{d+1}^{k+1}\|_2^2+2\|\tilde y_d^{k+1}\|_2^2\Big)\notag\\
  &\quad+\beta\|\tilde x_D^{k+1}\|_2^2
  +\beta\sum_{d=1}^{D-1}\|\tilde x_{d+1}^k\|_2^2\notag\\
  &\leq\beta c_\rho\big(2V^{k+1}+V^k\big)\notag\\
  &\leq3\beta c_\rho\left(\frac{1}{1+\gamma^\star}\right)^k\hat V^0.
  \label{eq:primal-rate-squared}
\end{align}
By Cauchy--Schwarz, $(\sum_d\|x_d^{k+1}-x_d^\star\|_2)^2\leq D\sum_d\|x_d^{k+1}-x_d^\star\|_2^2$.
Thus \eqref{eq:primal-rate-squared} proves \eqref{eq:lin-con} with $\calC=\sqrt{3D\beta c_\rho\hat V^0}$, establishing $R$-linear primal convergence.
Strong convexity ensures that the primal minimizer is unique.
}

% \fi % End of detailed proof of Theorem 3

\ifdefined\SubmittedPaper
\else
\section{Proof of Proposition \ref{prop:fullrank}
\label{prop:proof_rate}}

Let $\eta= [y^\top, z^\top, r^\top, \delta^\top ]^\top\in\mathbb R^{4s}$ be arbitrary, with each of $y,z,r,\delta$ partitioned into blocks of sizes $s_1,\ldots,s_{D-1}$ as $y_d $, $z_d$, $r_d$, and $\delta_d \in \bbR^{s_d}$, respectively; the blocks $(y_d,z_d,r_d,\delta_d)$ play the roles of $(\tilde y_d^{k+1},\tilde x_{d+1}^{k+1},r_d^{k+1},\delta_d^{k+1})$ in $\bm\eta^{k+1}$.
Set $a_d=2t\alpha_{d+1}/\|B_d\|_2^2$ for $d\in\bbN_{D-1}$, and $w_d=2(1-t)/l_d$ for $d=2,\ldots,D$.
To derive $\Xi$ from the descent bound preceding \eqref{eq:prf_rate12}, define
\[
g_d=A_d^\top(y_d-\delta_d)+B_{d-1}^\top y_{d-1},
\quad 2\leq d\leq D-1.
\]
Each expands as
\begin{align}
\|g_d\|_2^2
= \; & y_d^\top A_dA_d^\top y_d
+y_{d-1}^\top B_{d-1}B_{d-1}^\top y_{d-1}
\notag\\
&+\delta_d^\top A_dA_d^\top\delta_d
+2y_d^\top A_dB_{d-1}^\top y_{d-1}
\notag\\
&-2\delta_d^\top A_dA_d^\top y_d
-2\delta_d^\top A_dB_{d-1}^\top y_{d-1}\text.
\label{prf:thm_gd}
\end{align}
% For each $d$, the remaining terms, including the correction from $V$ to $\hat V$, expand as
% \begin{align*}
% &a_d\|z_d\|_2^2+\rho\|r_d-\rho^{-1}\delta_d\|_2^2\\
% &\qquad+\sigma\big(\|z_d-\rho^{-1}\delta_d\|_2^2-\|z_d\|_2^2\big)\\
% &=a_d\|z_d\|_2^2+\rho\|r_d\|_2^2
% \!-\! 2r_d^\top\delta_d\\
% &\qquad-\frac{2\sigma}{\rho}\delta_d^\top z_d
% +\left(\frac1\rho+\frac{\sigma}{\rho^2}\right)\|\delta_d\|_2^2.
% \end{align*}
% In a symmetric quadratic form, an off-diagonal block $M$ contributes twice the corresponding bilinear term.
% Thus the gradient expansion gives, for $2\leq d\leq D-1$,
% \begin{align*}
% &[\Xi^{(1,1)}]_{d,d-1} =w_dA_dB_{d-1}^\top,\;
% [\Xi^{(4,1)}]_{d,d} =-w_dA_dA_d^\top,\\
% &[\Xi^{(4,1)}]_{d,d-1} =-w_dA_dB_{d-1}^\top.
% \end{align*}
% Collecting the squared $y$ terms gives the diagonal blocks of $\Xi^{(1,1)}$.
% Expanding the remaining terms of the descent bound together with the correction from $V$ to $\hat V$, namely $a_d\|z_d\|_2^2+\rho\|r_d-\rho^{-1}\delta_d\|_2^2+\sigma(\|z_d-\rho^{-1}\delta_d\|_2^2-\|z_d\|_2^2)$, gives $\Xi^{(2,2)}$, $\Xi^{(3,3)}$, $\Xi^{(4,2)}$, and $\Xi^{(4,3)}$; its squared $\delta$ term and the gradient expansion give $\Xi^{(4,4)}$.
% All other blocks vanish, apart from the symmetric transposes.
Pre- and post-multiplying the four-by-four block matrix $\Xi$ by $\eta$ gives
\begin{align}
\eta^\top\Xi\eta
=\; & y^\top\Xi^{(1,1)}y+z^\top\Xi^{(2,2)}z
+r^\top\Xi^{(3,3)}r
\notag\\
&+\delta^\top\Xi^{(4,4)}\delta
+2\delta^\top\Xi^{(4,1)}y
\notag \\
&+2\delta^\top\Xi^{(4,2)}z
+2\delta^\top\Xi^{(4,3)}r\text.
\label{prf:thm_eta_xi_eta}
\end{align}
% For example, the two terms involving blocks $(1,4)$ and $(4,1)$ satisfy
% $y^\top(\Xi^{(4,1)})^\top\delta+\delta^\top\Xi^{(4,1)}y
% =2\delta^\top\Xi^{(4,1)}y$ because they are equal scalars.
% The same reasoning applies to the other two cross terms.
Substituting the individual blocks $y_d, x_d, r_d$, and $\delta_d$ into  \eqref{prf:thm_eta_xi_eta} yields
\begin{subequations}
\begin{align}
y^\top\Xi^{(1,1)}y
&=\sum_{d=1}^{D-1}w_{d+1}y_d^\top B_dB_d^\top y_d + \sum_{d=2}^{D-1}w_dy_d^\top A_dA_d^\top y_d \notag \\
&\qquad+2\sum_{d=2}^{D-1}w_dy_d^\top A_dB_{d-1}^\top y_{d-1}\text, 
\label{eq:id1}
\\
2 \delta^\top\Xi^{(4,1)}y &=-2\sum_{d=2}^{D-1}w_d\delta_d^\top A_dA_d^\top y_d  \notag
\\
&\qquad - 2\sum_{d=2}^{D-1}w_d\delta_d^\top A_dB_{d-1}^\top y_{d-1}, 
\label{eq:id2} 
\\
\delta^\top\Xi^{(4,4)}\delta
&=\sum_{d=1}^{D-1}\left(\frac1\rho+\frac{\sigma}{\rho^2}\right)\|\delta_d\|_2^2  \notag \\
&\qquad +\sum_{d=2}^{D-1}w_d\delta_d^\top A_dA_d^\top\delta_d,
\label{eq:id3}
\\
z^\top\Xi^{(2,2)}z &=\sum_{d=1}^{D-1}a_d\|z_d\|_2^2, 
\label{eq:id4}
\\
r^\top\Xi^{(3,3)}r &=\rho\sum_{d=1}^{D-1}\|r_d\|_2^2,
\label{eq:id5}
\\
2\delta^\top\Xi^{(4,2)}z &=-\frac{2\sigma}{\rho}\sum_{d=1}^{D-1}\delta_d^\top z_d,
\label{eq:id6}
\\
2\delta^\top\Xi^{(4,3)}r &=-2\sum_{d=1}^{D-1}\delta_d^\top r_d.
\label{eq:id7}
\end{align}
\end{subequations}
For each $d$, combining the first term of \eqref{eq:id3} with the terms of \eqref{eq:id4}--\eqref{eq:id7} gives
\begin{align*}
&a_d\|z_d\|_2^2 \!+\! \rho\|r_d\|_2^2-2\delta_d^\top r_d \!-\! \frac{2\sigma}{\rho}\delta_d^\top z_d
\!+\! \left(\frac1\rho \!+\! \frac{\sigma}{\rho^2}\right)\|\delta_d\|_2^2\\
&=(a_d-\sigma)\|z_d\|_2^2
+\sigma\|z_d \!-\! \rho^{-1}\delta_d\|_2^2+\rho\|r_d \!-\! \rho^{-1}\delta_d\|_2^2.
\end{align*}
Reindexing the first sum in $y^\top\Xi^{(1,1)}y$ separates the leaf term
$\sum_{d=1}^{D-1}w_{d+1}y_d^\top B_dB_d^\top y_d =w_D\|B_{D-1}^\top y_{D-1}\|_2^2
+\sum_{d=2}^{D-1}w_d\|B_{d-1}^\top y_{d-1}\|_2^2$.
By \eqref{prf:thm_gd}, all terms containing $A_d$ and $B_{d-1}$ in \eqref{eq:id1}--\eqref{eq:id3}, excluding the leaf term $w_D\|B_{D-1}^\top y_{D-1}\|_2^2$, then combine into $\sum_{d=2}^{D-1}w_d\|g_d\|_2^2$.
Combining the two equalities above with \eqref{prf:thm_gd} and \eqref{eq:id1}--\eqref{eq:id7}, the expansion \eqref{prf:thm_eta_xi_eta} becomes
\begin{align}
&\eta^\top\Xi\eta
= w_D 
\|B_{D-1}^\top y_{D-1}\|_2^2 + \sum_{d=2}^{D-1} w_d \| g_d \|_2^2 + \notag\\
&\sum_{d=1}^{D-1} \!\! \Big[(a_d \!-\! \sigma)\|z_d\|_2^2
+\sigma\|z_d \!-\! \frac{\delta_d}{\rho}\|_2^2 +\rho \|r_d \!-\! \frac{\delta_d}{\rho}\|_2^2\Big].
\label{eq:xi-sos}
\end{align}
All coefficients are positive because $t\in(0,1)$, $l_d>0$, $\rho>0$, and $0<\sigma<\min_d a_d$.
Hence $\eta^\top\Xi\eta\geq0$ for every $\eta$, so $\Xi\succeq0$, independently of whether $\eta$ is generated by the algorithm.

If $\eta^\top\Xi\eta=0$, the last three terms of the sum in \eqref{eq:xi-sos} force $z_d=\delta_d=r_d=0$ for every $d$.
The remaining squared terms then give
\[
\begin{aligned}
B_{D-1}^\top y_{D-1}&=0,\\
A_d^\top y_d+B_{d-1}^\top y_{d-1}&=0,\quad(2\leq d\leq D-1).
\end{aligned}
\]
Full row rank of $\Lambda$ in \eqref{prop:matrix} implies $y_d=0$ for all $1\leq d\leq D-1$. Thus, $\Xi$ is a positive definite matrix.
% Recursing backward over $d=D-1,\ldots,2$ and using the full row rank of each $B_{d-1}$ gives $y_{d-1}=0$, so $y=0$.
% Thus $\eta=0$ is the only zero of the quadratic form, proving $\Xi\succ0$.
% For $D=2$, the intermediate sum is empty and $B_1^\top y_1=0$ gives the same conclusion.

Finally, $\Theta\succeq0$ and $\|\Theta\|_2=\max\{\rho^{-1},\rho+\sigma\}>0$.
Choose $\gamma=\lambda_{\min}(\Xi)/(2\|\Theta\|_2)>0$.
Then
\[
\begin{aligned}
\Xi-\gamma\Theta&\succeq
\bigl(\lambda_{\min}(\Xi)-\gamma\|\Theta\|_2\bigr)I_{4s}\\
&=\tfrac12\lambda_{\min}(\Xi)I_{4s}\succ0,
\end{aligned}
\]
which proves feasibility of \eqref{thm:sdp}.

\fi

%%% Local Variables:
%%% mode: LaTeX
%%% TeX-master: "main"
%%% End: